
\documentclass{amsart}%
\usepackage{graphicx}
\usepackage{amscd}
\usepackage{amsmath}
\usepackage{amsfonts}
\usepackage{amssymb}%
\setcounter{MaxMatrixCols}{30}
\newtheorem{theorem}{Theorem}[section]

\newtheorem{corollary}[theorem]{Corollary}

\newtheorem{lemma}[theorem]{Lemma}

\newtheorem{proposition}[theorem]{Proposition}

\numberwithin{equation}{section}

\begin{document}
\title[Cobordisms of maps]{Cobordisms of maps with singularities of a\\given class}
\author{YOSHIFUMI ANDO}
\address{Department of Mathematical Sciences, Faculty of Science, Yamaguchi University,
Yamaguchi, 753-8512, Japan}
\email{andoy@yamaguchi-u.ac.jp}
\thanks{$2000$ \textit{Mathematics Subject Classification.} $57R45, 57R90, 58A20$}
\keywords{Singularity, map, cobordism, stable homotopy}

\begin{abstract}
Let $P$ be a smooth manifold of dimension $p$. We will describe the group of
all cobordism classes of smooth maps of $n$-dimensional closed manifolds into
$P$ with singularities of a given class (including all fold singularities if
$n\geqq p$) in terms of certain stable homotopy groups by applying the
homotopy principle on the existence level, which is assumed to hold for those
smooth maps. We will also deal with the oriented version and construct a
classifying space of this oriented cobordism group in the dimensions $n<p$ and
$n\geqq p\geqq2$.

\end{abstract}
\maketitle


\section{Introduction}

Let $N$ and $P$ be smooth ($C^{\infty}$) manifolds of dimensions $n$ and $p$
respectively. Let $k\gg n,$ $p$ ($k$ may be $\infty$). Let $J^{k}(N,P)$ denote
the $k$-jet bundle of the manifolds $N$ and $P$ with the canonical projection
$\pi_{N}^{k}\times\pi_{P}^{k}$ onto $N\times P$ and the fiber $J^{k}(n,p)$ of
all$\ k$-jets of smooth map germs $(\mathbb{R}^{n},0)\rightarrow
(\mathbb{R}^{p},0)$. Here, $\pi_{N}^{k}$ and $\pi_{P}^{k}$ map a $k$-jet\ to
its source and target respectively. Let $\Omega=\Omega(n,p)$ denote a nonempty
open subspace of $J^{k}(n,p)$ which is invariant with respect to the action of
$L^{k}(p)$ $\times$ $L^{k}(n)$, where $L^{k}(m)$ denotes the group of $k$-jets
of germs of diffeomorphisms of $(\mathbb{R}^{m},0)$. Let $\Omega(N,P)$ denote
the open subbundle of $J^{k}(N,P)$ associated to $\Omega(n,p)$. A smooth map
$f:N\rightarrow P$ is called an $\Omega$-\emph{regular map} if and only if
$j^{k}f(N)\subset\Omega(N,P)$.

Let $C_{\Omega}^{\infty}(N,P)$\ denote the space of all $\Omega$-regular
maps\ of $N$ to $P$ equipped with the $C^{\infty}$ topology. Let
$\Gamma_{\Omega}(N,P)$ denote the space consisting of all continuous sections
of the fiber bundle $\pi_{N}^{k}|\Omega(N,P):\Omega(N,P)\rightarrow N$
equipped with the compact-open topology. Then we have the continuous map
$j_{\Omega}:C_{\Omega}^{\infty}(N,P)\rightarrow\Gamma_{\Omega}(N,P)$ defined
by $j_{\Omega}(f)=j^{k}f$. We say that $\Omega(n,p)$ satisfies \emph{the
homotopy principle} (simply \emph{h-principle}) if any section $s$ in
$\Gamma_{\Omega}(N,P)$ has an $\Omega$-regular map $f$ such that $j^{k}f$ is
homotopic to $s$ as sections. In this paper we say that $\Omega(n,p)$
satisfies \emph{the relative homotopy principle on the existence
level}\textit{ }if the following property (h-P) holds.

(h-P):\emph{ Let }$C$\emph{ be a closed subset of }$N$\emph{. Let }$s$\emph{
be a section of }$\Gamma_{\Omega}(N,P)$\emph{ which has an }$\Omega
$\emph{-regular map }$g$\emph{ defined on a neighborhood of }$C$\emph{ into
}$P$\emph{, where }$j^{k}g=s$\emph{. Then there exists an }$\Omega
$\emph{-regular map }$f:N\rightarrow P$\emph{ such that }$j^{k}f$\emph{ is
homotopic to }$s$\emph{ relative to a neighborhood of }$C$\emph{ by a homotopy
}$s_{\lambda}$\emph{ in }$\Gamma_{\Omega}(N,P)$\emph{ with }$s_{0}=s$\emph{
and }$s_{1}=j^{k}f$\emph{.}

Let $i_{+m}:J^{k}(n,p)\rightarrow J^{k}(n+m,p+m)$ denote the map defined by
$i_{+m}(j_{0}^{k}f)=j_{0}^{k}(f\times id_{\mathbb{R}^{m}})$ for a positive
integer, where $f:(\mathbb{R}^{n},0)\rightarrow(\mathbb{R}^{p},0)$ and
$id_{\mathbb{R}^{m}}$\ is the identity of $\mathbb{R}^{m}$. Let $\Omega
_{\star}=\Omega_{\star}(n+1,p+1)$ denote a nonempty open subspace of
$J^{k}(n+1,p+1)$ which is invariant with respect to the action of $L^{k}(p+1)$
$\times$ $L^{k}(n+1)$ and satisfies $i_{+1}(\Omega(n,p))\subset\Omega_{\star
}(n+1,p+1)$.\ Let $P$ be a smooth manifold of dimension $p$. We define the
notion of $\Omega_{\star}$-cobordisms of $\Omega$-regular maps to $P$. Let
$f_{i}:N_{i}\rightarrow P$ ($i=0,1$) be two $\Omega$-regular maps, where
$N_{i}$ are closed smooth $n$-dimensional manifolds. We say that they are
$\Omega_{\star}$\emph{-cobordant} when there exists an $\Omega_{\star}%
$-regular map, called an $\Omega_{\star}$-cobordism, $\mathfrak{C}:(W,\partial
W)\rightarrow(P\times\lbrack0,1],P\times0\cup P\times1)$ such that, for a
sufficiently small positive real number $\epsilon$,

(i) $W$ is a compact smooth manifold of dimension $n+1$ with $\partial W$
being $N_{0}\cup N_{1}$ and the collar of $\partial W$ is identified with
$N_{0}\times\lbrack0,\epsilon]\cup N_{1}\times\lbrack1-\epsilon,1]$,

(ii) $\mathfrak{C}|N_{0}\times\lbrack0,\epsilon]=f_{0}\times id_{[0,\epsilon
]}$ and $\mathfrak{C}|N_{1}\times\lbrack1-\epsilon,1]=f_{1}\times
id_{[1-\epsilon,1]}.$

\noindent We similarly define the notion of \emph{oriented} $\Omega_{\star}%
$-cobordisms of $\Omega$-regular maps by providing manifolds concerned with
orientations, where $N_{0}\cup N_{1}$ in (i) should be replaced by $N_{0}%
\cup(-N_{1})$. Let $\mathfrak{N}(n,P;\Omega,\Omega_{\star})$ (respectively
$\mathfrak{O}(n,P;\Omega,\Omega_{\star})$) denote the monoid of all
$\Omega_{\star}$-cobordism (respectively oriented $\Omega_{\star}$-cobordism)
classes of $\Omega$-regular maps to $P$. In this paper we will describe these
monoids of cobordism classes in terms of certain stable homotopy groups.

We need some notations for this purpose. Let $\mathcal{E}\rightarrow X$ and
$\mathcal{F}\rightarrow Y\,$be smooth vector bundles of dimensions $n$ and $p$
over smooth manifolds, and let $\pi_{X}$ and $\pi_{Y}$ be the projections of
$X\times Y$ onto $X$ and $Y$ respectively.\ Define the vector bundle
$J^{k}(\mathcal{E},\mathcal{F})$ over $X\times Y$ by%
\begin{equation}
J^{k}(\mathcal{E},\mathcal{F})=\bigoplus_{i=1}^{k}\text{\textrm{Hom}}%
(S^{i}(\pi_{X}^{\ast}(\mathcal{E})),\pi_{Y}^{\ast}(\mathcal{F}))
\end{equation}
with the canonical projections $\pi_{X}^{k}:J^{k}(\mathcal{E},\mathcal{F}%
)\rightarrow X$\ and $\pi_{Y}^{k}:J^{k}(\mathcal{E},\mathcal{F})\rightarrow
Y$. Here, $S^{i}(\mathcal{E})$ is the vector bundle $\cup_{x\in X}%
S^{i}(\mathcal{E}_{x})$ over $X$, where $S^{i}(\mathcal{E}_{x})$ denotes the
$i$-fold symmetric product of the fiber $\mathcal{E}_{x}$ over $x$. The
canonical fiber $\bigoplus_{i=1}^{k}\mathrm{Hom}(S^{i}(\mathbb{R}%
^{n}),\mathbb{R}^{p})$ is canonically identified with $J^{k}(n,p)$. If we
provide $N$ and $P$ with Riemannian metrics, then $J^{k}(TN,TP)$ is identified
with $J^{k}(N,P)$\ over $N\times P$ (see Section 2). Let $\Omega
(\mathcal{E},\mathcal{F})$ denote the open subbundle of $J^{k}(\mathcal{E}%
,\mathcal{F})$ associated to $\Omega(n,p)$.

Let $G_{m}$ refer to the grassmann manifold $G_{m,\ell}$\ (respectively
oriented grassmann manifold $\widetilde{G}_{m,\ell}$) of all $m$-subspaces
(respectively oriented $m$-subspaces) of $\mathbb{R}^{\ell+m}$.\ Let
$\gamma_{G_{m}}^{m}$ and $\widehat{\gamma}_{G_{m}}^{\ell}$\ denote the
canonical vector bundles of dimensions $m$ and $\ell$\ over the space $G_{m}%
$\ respectively such that $\gamma_{G_{m}}^{m}\oplus\widehat{\gamma}_{G_{m}%
}^{\ell}$ is the trivial bundle $\varepsilon_{G_{m}}^{\ell+m}$. Let
$T(\widehat{\gamma}_{G_{m}}^{\ell})$ denote the Thom space of $\widehat
{\gamma}_{G_{m}}^{\ell}$. The spaces $\{T(\widehat{\gamma}_{G_{m}}^{\ell
})\}_{\ell}$\ constitute a spectrum.\ Let $i^{G}:G_{n}\rightarrow G_{n+1}$
denote the injection mapping an $n$-plane $a$ to the $(n+1)$-plane generated
by $a$ and the $(n+\ell+1)$-th unit vector $\mathbf{e}_{n+\ell+1}$ in
$\mathbb{R}^{n+\ell+1}$. Let $\boldsymbol{\Omega}=\Omega(\gamma_{G_{n}}%
^{n},TP)$ and $\boldsymbol{\Omega}_{\ast}=\Omega_{\star}(\gamma_{G_{n+1}%
}^{n+1},TP\oplus\varepsilon_{P}^{1})$ be the open subbundles of $J^{k}%
(\gamma_{G_{n}}^{n},TP)$ and $J^{k}(\gamma_{G_{n+1}}^{n+1},TP\oplus
\varepsilon_{P}^{1})$ associated to $\Omega(n,p)$ and $\Omega_{\star
}(n+1,p+1)$ respectively, where $\varepsilon_{P}^{1}$ denotes the trivial
bundle $P\times\mathbb{R}$. Set $\widehat{\gamma}_{\boldsymbol{\Omega}}^{\ell
}=(\pi_{G_{n}}^{k})^{\ast}(\widehat{\gamma}_{G_{n}}^{\ell}%
)|_{\boldsymbol{\Omega}}$ and $\widehat{\gamma}_{\boldsymbol{\Omega}_{\ast}%
}^{\ell}=(\pi_{G_{n+1}}^{k})^{\ast}(\widehat{\gamma}_{G_{n+1}}^{\ell
})|_{\boldsymbol{\Omega}_{\ast}}$. There exists a fiberwise map $\Delta
^{(\Omega,\Omega_{\star})}:\boldsymbol{\Omega}\rightarrow\boldsymbol{\Omega
}_{\star}$ associated to $i_{+1}|\Omega(n,p):\Omega(n,p)\rightarrow
\Omega_{\star}(n+1,p+1)$ covering $i^{G}\times id_{P}$. Then $\Delta
^{(\Omega,\Omega_{\star})}$ induces the bundle map
\[
\mathbf{b}(\widehat{\gamma})^{(\Omega,\Omega_{\star})}:\widehat{\gamma
}_{\boldsymbol{\Omega}}^{\ell}\longrightarrow\widehat{\gamma}%
_{\boldsymbol{\Omega}_{\star}}^{\ell}%
\]
covering $\Delta^{(\Omega,\Omega_{\star})}$ and the associated map
$T(\mathbf{b}(\widehat{\gamma})^{(\Omega,\Omega_{\star})})$ between the Thom
spaces. Let $\ell\gg n,p$.\ We denote the image of%
\begin{equation}
T(\mathbf{b}(\widehat{\gamma})^{(\Omega,\Omega_{\star})})_{\ast}:\lim
_{\ell\rightarrow\infty}\pi_{n+\ell}\left(  T(\widehat{\gamma}%
_{\boldsymbol{\Omega}}^{\ell})\right)  \longrightarrow\lim_{\ell
\rightarrow\infty}\pi_{n+\ell}\left(  T(\widehat{\gamma}_{\boldsymbol{\Omega
}_{\star}}^{\ell})\right)
\end{equation}
by $\mathrm{\operatorname{Im}}^{\mathfrak{N}}\left(  T(\mathbf{b}%
(\widehat{\gamma})^{(\Omega,\Omega_{\star})})\right)  $ (respectively
$\mathrm{\operatorname{Im}}^{\mathfrak{O}}\left(  T(\mathbf{b}(\widehat
{\gamma})^{(\Omega,\Omega_{\star})})\right)  $) in the unoriented
(respectively\ the oriented) case.

We are ready to state the main result of this paper.

\begin{theorem}
Let $n$ and $p$ be positive integers. Let $P$ be a $p$-dimensional manifold.
Let $\Omega(n,p)$ and $\Omega_{\star}(n+1,p+1)$\ denote the above open subsets
invariant with respect to the actions of $L^{k}(p)$ $\times$ $L^{k}(n)$ and
$L^{k}(p+1)$ $\times$ $L^{k}(n+1)$, respectively, such that $i_{+1}%
(\Omega(n,p))\subset\Omega_{\star}(n+1,p+1)$, $\Omega(n,p)$ satisfies the
h-principle and that $\Omega_{\star}(n+1,p+1)$ satisfies the relative
h-principle on the existence level in (h-P).

Then there exist isomorphisms%
\begin{align*}
\mathfrak{n}^{(\Omega,\Omega_{\star})}  &  :\mathfrak{{N}}(n,P;\Omega
,\Omega_{\star})\longrightarrow\mathrm{\operatorname{Im}}%
\mathfrak{^{\mathfrak{N}}}\left(  T(\mathbf{b}(\widehat{\gamma})^{(\Omega
,\Omega_{\star})})\right)  ,\\
\mathfrak{o}^{(\Omega,\Omega_{\star})}  &  :\mathfrak{{O}}(n,P;\Omega
,\Omega_{\star})\longrightarrow\mathrm{\operatorname{Im}}%
\mathfrak{^{\mathfrak{O}}}\left(  T(\mathbf{b}(\widehat{\gamma})^{(\Omega
,\Omega_{\star})})\right)  .
\end{align*}

\end{theorem}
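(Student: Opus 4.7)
The plan is to construct $\mathfrak{n}^{(\Omega,\Omega_{\star})}$ (and analogously $\mathfrak{o}^{(\Omega,\Omega_{\star})}$) via a Pontryagin--Thom construction adapted to jet sections, then verify well-definedness, surjectivity, and injectivity in turn. Given an $\Omega$-regular map $f:N\to P$, I first choose an embedding $N\hookrightarrow\mathbb{R}^{n+\ell}$ for $\ell\gg 0$; the Gauss map $\tau_{N}:N\to G_{n}$ classifies $TN=\tau_{N}^{\ast}\gamma_{G_{n}}^{n}$ with normal bundle $\nu_{N}=\tau_{N}^{\ast}\widehat{\gamma}_{G_{n}}^{\ell}$. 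Together with $f$, this promotes $j^{k}f$ to a canonical lift $\widetilde{j^{k}f}:N\to\boldsymbol{\Omega}$ over $\tau_{N}\times f$, under which $\nu_{N}$ is the pullback of $\widehat{\gamma}_{\boldsymbol{\Omega}}^{\ell}$. Composing the standard collapse $S^{n+\ell}\to T(\nu_{N})$ with the induced Thom map yields a class $\alpha(f)\in\pi_{n+\ell}(T(\widehat{\gamma}_{\boldsymbol{\Omega}}^{\ell}))$, and I set $\mathfrak{n}^{(\Omega,\Omega_{\star})}([f])=T(\mathbf{b}(\widehat{\gamma})^{(\Omega,\Omega_{\star})})_{\ast}(\alpha(f))$, which lies in $\mathrm{Im}^{\mathfrak{N}}(T(\mathbf{b}(\widehat{\gamma})^{(\Omega,\Omega_{\star})}))$ by construction.

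To check well-definedness on cobordism classes, an $\Omega_{\star}$-cobordism $\mathfrak{C}:W\to P\times[0,1]$ embedded into $\mathbb{R}^{n+\ell}\times[0,1]$ compatibly with the collars produces a lift $\widetilde{j^{k}\mathfrak{C}}:W\to\boldsymbol{\Omega}_{\ast}$ whose Pontryagin--Thom collapse is a homotopy in $T(\widehat{\gamma}_{\boldsymbol{\Omega}_{\ast}}^{\ell})$ between $T(\mathbf{b})_{\ast}(\alpha(f_{0}))$ and $T(\mathbf{b})_{\ast}(\alpha(f_{1}))$; the collar conditions $\mathfrak{C}|N_{i}\times[\cdot]=f_{i}\times id$ combined with the inclusion $i_{+1}(\Omega(n,p))\subset\Omega_{\star}(n+1,p+1)$ provide exactly the required boundary matching. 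Since disjoint union of $\Omega$-regular maps corresponds to addition of stable homotopy classes, $\mathfrak{n}^{(\Omega,\Omega_{\star})}$ is a monoid homomorphism (taking values in an abelian group, which a posteriori forces the monoid to be a group).

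For surjectivity, pick $\beta\in\mathrm{Im}^{\mathfrak{N}}(T(\mathbf{b}(\widehat{\gamma})^{(\Omega,\Omega_{\star})}))$, lift it to some $\alpha\in\pi_{n+\ell}(T(\widehat{\gamma}_{\boldsymbol{\Omega}}^{\ell}))$, and represent $\alpha$ by a map $\psi:S^{n+\ell}\to T(\widehat{\gamma}_{\boldsymbol{\Omega}}^{\ell})$ transverse to the zero section $\boldsymbol{\Omega}$. The preimage is a closed $n$-manifold $N\subset\mathbb{R}^{n+\ell}$ equipped with a map $N\to\boldsymbol{\Omega}$ whose projection to $G_{n}$ is (isotopic to) the Gauss map of the embedding and whose projection to $P$ is some $g:N\to P$. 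Transversality identifies this lift with a section $s\in\Gamma_{\Omega}(N,P)$, and the h-principle for $\Omega$ furnishes an $\Omega$-regular $f:N\to P$ with $j^{k}f$ homotopic to $s$. Unwinding the Pontryagin--Thom construction then gives $\mathfrak{n}^{(\Omega,\Omega_{\star})}([f])=\beta$.

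For injectivity, assume $\mathfrak{n}^{(\Omega,\Omega_{\star})}([f_{0}])=\mathfrak{n}^{(\Omega,\Omega_{\star})}([f_{1}])$ and pick a homotopy $\Psi:S^{n+\ell}\times[0,1]\to T(\widehat{\gamma}_{\boldsymbol{\Omega}_{\ast}}^{\ell})$ between the Pontryagin--Thom representatives. After making $\Psi$ transverse to the zero section (without altering it near the endpoints), the preimage is a compact $(n+1)$-manifold $W\subset\mathbb{R}^{n+\ell}\times[0,1]$ with $\partial W=N_{0}\cup N_{1}$ together with a section $S$ of the relevant open subbundle of $J^{k}(TW,T(P\times[0,1]))$ which, on a collar $C$ of $\partial W$, is precisely $j^{k}(f_{i}\times id_{[0,\epsilon]})$. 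Applying the relative h-principle (h-P) for $\Omega_{\star}$ with this $C$ yields an $\Omega_{\star}$-regular map $\mathfrak{C}:W\to P\times[0,1]$ extending the boundary data, which is the desired $\Omega_{\star}$-cobordism. The oriented case is parallel, with $G_{m}$ replaced by $\widetilde{G}_{m}$ throughout. The main obstacle I anticipate is precisely this injectivity step: arranging the transverse preimage and the section $S$ to agree on the nose with $j^{k}(f_{i}\times id)$ along the collar, rather than merely up to homotopy, so that (h-P) applies directly; this requires a careful boundary-relative transversality argument and essential use of the compatibility $i_{+1}(\Omega(n,p))\subset\Omega_{\star}(n+1,p+1)$.
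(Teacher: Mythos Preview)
Your overall strategy---define the map via a jet-enhanced Pontryagin--Thom construction, then prove surjectivity with the h-principle for $\Omega$ and injectivity with the relative h-principle for $\Omega_{\star}$---is exactly the paper's approach, and your outline is correct.

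One remark on where the actual work lies. The obstacle you flag (matching the section on the nose along the collar) is handled routinely: one simply arranges the homotopy $\Psi$ to be the product $T(\mathbf{b})\circ T(c^{\nu_{N_i}}_{s_{N_i}})\circ a_{N_i}\circ\mathrm{pr}$ on a collar $S^{n+\ell}\times I(i,\epsilon)$ before making it transverse to $\boldsymbol{\Omega}_{\star}$ relative to those collars; then $W\cap(S^{n+\ell}\times I(i,\epsilon))=N_i\times I(i,\epsilon)$ and the restriction is literally $j^{k}(f_i\times id)$. The step you gloss over, by contrast, is where the paper invests care: transversality to the zero section hands you a bundle map $\nu_N\to\widehat{\gamma}^{\ell}_{\boldsymbol{\Omega}}$ (or $\nu_W\to\widehat{\gamma}^{\ell}_{\boldsymbol{\Omega}_{\star}}$) covering a map $N\to\boldsymbol{\Omega}$, but to obtain a genuine section $s_N\in\Gamma_{\Omega}(N,P)$ you must know that the composite $N\to\boldsymbol{\Omega}\to G_n$ classifies $TN$, not merely $\nu_N$. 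Your parenthetical ``(isotopic to) the Gauss map'' asserts this without proof. The paper deduces it from a complementary-bundle lemma (\cite[Proposition~3.3]{FoldSurg}): the trivializations $TN\oplus\nu_N\cong\varepsilon^{n+\ell}$ and $\gamma^{n}_{G_n}\oplus\widehat{\gamma}^{\ell}_{G_n}\cong\varepsilon^{n+\ell}$, together with $\ell\gg n$, force the induced map on tangent bundles to exist and to be compatible with the trivializations up to homotopy. This is needed in both the surjectivity and injectivity arguments, so you should make it explicit.
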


Let $\mathcal{K}$ denote the contact group introduced in \cite{MathIII} which
acts on $J^{k}(n,p)$ or $J^{k}(n+m,p+m)$. Let $\Omega_{\mathcal{K}}^{m}%
=\Omega_{\mathcal{K}}(n+m,p+m)$ denote the subset of $J^{k}(n+m,p+m)$ which
consists of all $\mathcal{K}$-orbits $\mathcal{K}(i_{+m}(z))$ for $k$-jets
$z\in\Omega(n,p)$. It will be proved that $\Omega_{\mathcal{K}}(n+m,p+m)$ is
an open subset (see Lemma 5.2).

As for the image of $T(\mathbf{b}(\widehat{\gamma})^{(\Omega,\Omega_{\star}%
)})_{\ast}$, we will prove the following theorem.

\begin{theorem}
Let $n<p$ and $P$ be as in Theorem 1.1. Let $\Omega(n,p)$ denote a nonempty
open subset in $J^{k}(n,p)$ invariant with respect to the action of
$\mathcal{K}$ and let $\Omega_{\mathcal{K}}(n+1,p+1)$ be as above. Then the
homomorphism $T(\mathbf{b}(\widehat{\gamma})^{(\Omega,\Omega_{\mathcal{K}}%
)})_{\ast}$ in (1.2) is surjective.
\end{theorem}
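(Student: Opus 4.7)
The plan is to prove surjectivity geometrically by destabilizing jet-level structures, using the $\mathcal{K}$-orbit description $\Omega_{\mathcal{K}}(n+1,p+1)=\mathcal{K}\cdot i_{+1}(\Omega(n,p))$ together with the dimension hypothesis $n<p$. By the Pontryagin--Thom construction (for $\ell\gg n,p$), a class $\alpha\in\pi_{n+\ell}(T(\widehat{\gamma}_{\boldsymbol{\Omega}_{\mathcal{K}}}^{\ell}))$ is represented by a closed smooth $n$-dimensional manifold $N$ smoothly embedded in a sphere, together with a classifying map $\Phi:N\to\boldsymbol{\Omega}_{\mathcal{K}}$: composing with $\pi_{G_{n+1}}^{k}$ classifies a rank-$(n+1)$ stable extension of $TN$, composing with $\pi_{P}^{k}$ gives a continuous map $N\to P$, and pointwise $\Phi(w)\in\Omega_{\mathcal{K}}(n+1,p+1)$.

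The destabilization would proceed as follows. For each $w\in N$, the value $\Phi(w)$ is $\mathcal{K}$-equivalent to $i_{+1}(z_{w})$ for some $z_{w}\in\Omega(n,p)$. The canonical ``stabilization direction'' built into $i_{+1}$ distinguishes a line inside the rank-$(n+1)$ bundle classified by $\pi_{G_{n+1}}^{k}\circ\Phi$, and the dimension hypothesis $n<p$, combined with the $\mathcal{K}$-invariance of $\Omega(n,p)$, should ensure that these pointwise choices organize into a smooth rank-$1$ sub-bundle with an $\Omega$-structure on its complement. One would thus obtain a continuous lift $\Psi:N\to\boldsymbol{\Omega}$ such that $\Delta^{(\Omega,\Omega_{\mathcal{K}})}\circ\Psi$ is homotopic to $\Phi$ as lifts of the Gauss classification, so that the image of $[\Psi]\in\pi_{n+\ell}(T(\widehat{\gamma}_{\boldsymbol{\Omega}}^{\ell}))$ under $T(\mathbf{b}(\widehat{\gamma})^{(\Omega,\Omega_{\mathcal{K}})})_{\ast}$ equals $\alpha$.

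The main obstacle is the continuous globalization in the destabilization step. The pointwise $\mathcal{K}$-equivalence $\Phi(w)\sim i_{+1}(z_{w})$ is a priori only set-theoretic, and producing a continuous choice of destabilization requires the vanishing of obstruction classes to sections of a homogeneous bundle with fiber $\mathcal{K}/\mathcal{K}_{0}$, where $\mathcal{K}_{0}\subset\mathcal{K}$ denotes the subgroup stabilizing $i_{+1}(\Omega(n,p))$ setwise. The hypothesis $n<p$ should eliminate these obstructions via the control it gives on the ranks of the differentials appearing in each $\mathcal{K}$-orbit, preventing rank-jump discontinuities; rigorous verification will presumably rely on the $\mathcal{K}$-orbit stratification of $\Omega_{\mathcal{K}}(n+1,p+1)$ developed (from Lemma 5.2 onward) in Section 5, together with Mather's structural results on contact equivalence.
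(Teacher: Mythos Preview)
Your geometric strategy is in the right spirit, but it has a genuine gap and you have misidentified where the hard work lies.

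What you actually need is the statement that the fiber map $\Delta^{(\Omega,\Omega_{\mathcal{K}})}:\boldsymbol{\Omega}\to\boldsymbol{\Omega}_{\mathcal{K}}^{1}$ is a homotopy $n$-equivalence (this is Lemma~7.2, which rests on Proposition~6.3 showing that $i_{+1}:\Omega(n,p)\to\Omega_{\mathcal{K}}^{1}$ is a homotopy $(p-1)$-equivalence, together with Lemma~7.1 on the Grassmannians). Once you have that, standard obstruction theory gives the lift of your $\Phi:N\to\boldsymbol{\Omega}_{\mathcal{K}}^{1}$ through $\Delta^{(\Omega,\Omega_{\mathcal{K}})}$, since $\dim N=n$ and the homotopy fiber is $(n-1)$-connected. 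Your framing in terms of sections of a $\mathcal{K}/\mathcal{K}_{0}$-bundle and a ``distinguished stabilization line'' is not the right mechanism: after acting by an arbitrary element of $\mathcal{K}$ the last-coordinate direction is scrambled, so there is no canonical line subbundle to split off. The actual content of Proposition~6.3 is an explicit two-step homotopy (Lemma~6.1 and Proposition~6.2) that first rotates the $(n{+}1)$-st source direction and the $(p{+}1)$-st target direction back into standard position and then kills the cross-terms, using $\dim K<p$ to avoid the zero-rank locus. None of this is in Section~5, which only establishes that $\Omega_{\mathcal{K}}^{1}$ is open; the connectivity argument is entirely in Section~6.

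By contrast, the paper's own proof of Theorem~1.2 does not run obstruction theory over $N$ at all. It takes Lemma~7.2, passes to homology via Whitehead, transfers to the Thom spaces via the Thom Isomorphism Theorem (with $\mathbb{Z}/2$ coefficients in the unoriented case), and then invokes Serre's mod-$\mathcal{C}$ Whitehead theorem for the class $\mathcal{C}$ of finite groups of odd order to get a $\mathcal{C}$-epimorphism on $\pi_{n+\ell}$. The final step observes that both sides are normal bordism groups in which every element has order two, so a $\mathcal{C}$-epimorphism is an honest epimorphism. Your direct lifting argument, once supplied with Proposition~6.3 and Lemma~7.2, would bypass the Serre-class machinery and the $2$-torsion observation entirely, which is arguably cleaner; but as written you have not supplied the connectivity input, and your pointer to Section~5 and Mather will not produce it.
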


Let $\Omega_{\mathcal{K}}(\gamma_{\widetilde{G}_{n+m,\ell}}^{n+m}%
,\mathbb{R}^{p+m})$ denote the subbundle of $J^{k}(\gamma_{\widetilde
{G}_{n+m,\ell}}^{n+m},\mathbb{R}^{p+m})$\ associated to $\Omega_{\mathcal{K}%
}(n+m,p+m)$.\ For $\mathfrak{{O}}=\mathfrak{{O}}(n,P;\Omega,\Omega
_{\mathcal{K}})$ we define%
\[
B_{\mathfrak{O}}=\lim_{m\rightarrow\infty}\left(  \lim_{\ell\rightarrow\infty
}C^{0}(S^{\ell+n-p},T((\pi_{\widetilde{G}_{n+m,\ell}}^{k})^{\ast}%
\widehat{\gamma}_{\widetilde{G}_{n+m,\ell}}^{\ell}|_{\Omega_{\mathcal{K}%
}(\gamma_{\widetilde{G}_{n+m,\ell}}^{n+m},\mathbb{R}^{p+m})}))\right)  ,
\]
where $C^{0}(X,Y)$ denotes the space consisting of\ all base-point preserving
continuous maps between connected spaces with base points equipped with the
compact-open topology.

\begin{theorem}
Let $n<p$ or $n\geqq p\geqq2$. Let $P$ be a closed connected oriented
$p$-dimensional manifold. Let $\Omega(n,p)$ be a nonempty $\mathcal{K}%
$-invariant open subset such that if $n\geqq p\geqq2$, then $\Omega(n,p)$
contains all fold jets at least. Then there exists an isomorphism%
\[
\mathfrak{{O}}(n,P;\Omega,\Omega_{\mathcal{K}})\longrightarrow\lbrack
P,B_{\mathfrak{O}}].
\]

\end{theorem}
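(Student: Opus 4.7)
Plan. First, by Theorem 1.1 applied to $(\Omega,\Omega_{\mathcal{K}})$---whose h-principle hypotheses are fulfilled under the assumption that $\Omega$ contains all fold jets when $n\geqq p\geqq 2$---one has
\[
\mathfrak{O}(n,P;\Omega,\Omega_{\mathcal{K}})\cong\operatorname{Im}^{\mathfrak{O}}\bigl(T(\mathbf{b}(\widehat{\gamma})^{(\Omega,\Omega_{\mathcal{K}})})\bigr).
\]
By Theorem 1.2 applied in the oriented case (in which the notation $G_m$ stands for $\widetilde{G}_m$), this image equals the full stable homotopy group $\lim_{\ell}\pi_{n+\ell}(T(\widehat{\gamma}^{\ell}_{\boldsymbol{\Omega}_{\ast}}))$. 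Iterating the stabilization $i_{+m}$ and the associated bundle maps gives the same group after higher stabilization, so
\[
\mathfrak{O}(n,P;\Omega,\Omega_{\mathcal{K}})\cong G:=\lim_{m\to\infty}\lim_{\ell\to\infty}\pi_{n+\ell}\bigl(T(\widehat{\gamma}^{\ell}_{\Omega_{\mathcal{K}}(\gamma^{n+m}_{\widetilde{G}_{n+m,\ell}},\,TP\oplus\varepsilon^{m}_{P})})\bigr).
\]
Theorem 1.3 therefore reduces to exhibiting an isomorphism $G\cong[P,B_{\mathfrak{O}}]$.

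The main step is a parametrized Pontryagin--Thom construction combined with Atiyah duality for $P$. The key observation is that $\Omega_{\mathcal{K}}(\gamma^{n+m}_{\widetilde{G}_{n+m,\ell}},TP\oplus\varepsilon^{m}_{P})$ is the bundle over $P$ associated to an oriented frame bundle of $TP$ with fibre $\Omega_{\mathcal{K}}(\gamma^{n+m}_{\widetilde{G}_{n+m,\ell}},\mathbb{R}^{p+m})$---exactly the space appearing in $B_{\mathfrak{O}}$---and the structure group acts through the restriction of the contact action of $\mathcal{K}$, under which $\Omega_{\mathcal{K}}$ is invariant. Fixing a closed embedding $P\hookrightarrow\mathbb{R}^{p+M}$ with normal bundle $\nu_{P}$, so that $TP\oplus\nu_{P}\cong\varepsilon^{p+M}_{P}$, and absorbing $\nu_{P}$ in the stabilization $m\mapsto m+M$, trivializes the $P$-parametrization of $TP\oplus\varepsilon^{m}_{P}$ at the cost of replacing $\Sigma^{\infty}P^{+}$ by its Atiyah dual. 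After this dualization, $G$ becomes the stable cohomotopy group $[P,B_{\mathfrak{O}}]$: the shift $S^{\ell+n-p}$ built into the definition of $B_{\mathfrak{O}}$ accounts precisely for the degree mismatch between stable homotopy of the parametrized Thom spectrum and stable cohomotopy of $P^{+}$ with values in the unparametrized one.

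Explicitly, the resulting map $\Phi:\mathfrak{O}(n,P;\Omega,\Omega_{\mathcal{K}})\to[P,B_{\mathfrak{O}}]$ sends $[f:N\to P]$ to the map whose value at $y\in P$ is obtained by choosing a proper embedding $e:N\hookrightarrow P\times\mathbb{R}^{m+\ell}$ with $\pi_{P}\circ e=f$ and an oriented chart around $y$, then Pontryagin--Thom-collapsing a tubular neighborhood of $e(N)$ inside a slice over $y$ to produce a pointed map $S^{\ell+n-p}\to T(\widehat{\gamma}^{\ell}_{\Omega_{\mathcal{K}}(\gamma^{n+m},\mathbb{R}^{p+m})})$. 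The inverse $\Psi$ is built by applying transversality to a representative $\phi:P\to B_{\mathfrak{O}}$: the fibrewise preimages of the zero sections assemble into the source of an $\Omega_{\mathcal{K}}$-regular map realizing $\phi$, via the relative h-principle guaranteed by the hypotheses.

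The principal obstacle is the rigorous globalization: assembling the fibrewise Pontryagin--Thom data into a continuous map $P\to B_{\mathfrak{O}}$ and, conversely, extracting a genuine cobordism class from a global homotopy class $\phi$. Continuity in the forward direction requires tubular neighborhoods and frame trivializations of $TP$ varying continuously with $y\in P$; the $\mathcal{K}$-invariance of $\Omega_{\mathcal{K}}$ is what absorbs the ambiguity in the latter choice. Bijectivity of $\Phi$ then reduces, by naturality of the Thom--Pontryagin construction, to checking compatibility with the isomorphisms furnished by Theorems 1.1 and 1.2 through the identification $G\cong[P,B_{\mathfrak{O}}]$ established above. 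Under the dimensional hypothesis $n<p$ or $n\geqq p\geqq 2$, the requisite transversality and relative h-principle arguments underlying $\Psi$ go through without further difficulty.
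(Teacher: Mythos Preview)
Your overall plan mirrors the paper's: identify $\mathfrak{O}(n,P;\Omega,\Omega_{\mathcal{K}})$ with a stable homotopy group via Theorems~1.1 and~1.2, stabilize in $m$, then pass to $[P,B_{\mathfrak{O}}]$ by trivializing $TP$ through an embedding $P\hookrightarrow\mathbb{R}^{p+\varkappa}$ and invoking $S$-duality. The second half of your argument is essentially Propositions~8.2 and~8.3 of the paper.

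There is, however, a genuine gap in the first half. You invoke Theorem~1.2 to conclude that $T(\mathbf{b}(\widehat{\gamma})^{(\Omega,\Omega_{\mathcal{K}})})_{\ast}$ is surjective, so that $\operatorname{Im}^{\mathfrak{O}}$ is the full group $\lim_{\ell}\pi_{n+\ell}(T(\widehat{\gamma}^{\ell}_{\boldsymbol{\Omega}_{\mathcal{K}}^{1}}))$. But Theorem~1.2 is stated and proved only for $n<p$: its proof rests on Proposition~6.3, which shows $i_{+1}:\Omega(n,p)\to\Omega_{\mathcal{K}}^{1}$ is a homotopy $(p-1)$-equivalence, and this yields surjectivity on $\pi_{n+\ell}$ only when $n\leq p-1$. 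For $n\geq p\geq2$ the map $\Delta^{(\Omega,\Omega_{\mathcal{K}})}$ is merely a homotopy $(p-1)$-equivalence (Lemma~7.2), which is not enough to control $\pi_{n+\ell}$. The paper does \emph{not} close this gap internally; instead it imports Sadykov's result \cite[Theorem~9.2]{Sady2}, which together with the relative h-principle (Theorem~9.1) gives the isomorphism~(9.1)
\[
\mathfrak{O}(n,P;\Omega)\;\cong\;\lim_{\ell}\pi_{n+\ell}\bigl(T(\widehat{\gamma}^{\ell}_{\boldsymbol{\Omega}_{\mathcal{K}}^{m}})\bigr)
\]
for $m$ large, in the range $n\geq p\geq2$ as well. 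Proposition~7.3 then shows these groups are independent of $m\geq\theta=n+2-p$, and one proceeds with Propositions~8.2 and~8.3. Your ``iterating the stabilization'' remark is Proposition~7.3, but that only identifies the groups for $m\geq\theta$ with one another; it does not supply the missing identification of $\mathfrak{O}$ with any of them when $n\geq p$. You need either to cite Sadykov or to give an independent argument that $\omega$ is already an isomorphism onto the full group in that range.

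A smaller point: your description of $\Psi$ as extracting an $\Omega_{\mathcal{K}}$-regular map from a representative $\phi:P\to B_{\mathfrak{O}}$ by fibrewise transversality and the relative h-principle is plausible but not how the paper proceeds; the paper simply chains the isomorphisms of Propositions~8.2 and~8.3 with~(9.1), never constructing an explicit geometric inverse. If you want to carry out your version rigorously you would need to verify that the fibrewise preimages assemble to a smooth manifold mapping to $P$ with the correct singularity profile, which is nontrivial bookkeeping.
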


In our study of cobordisms of singular maps the h-principle for $\Omega
$-regular maps plays a quite important role and our method is available in the
dimensions $n\geqq p$ as well. Theorem 1.1 has been developed from an
observation for fold-maps in \cite[Theorem 0.4]{FoldRIMS}. Set
$\boldsymbol{\Omega}_{\mathcal{K}}^{m}=\Omega_{\mathcal{K}}(\gamma_{G_{n+m}%
}^{n+m},TP\oplus\varepsilon_{P}^{m})$. Theorem 1.2 implies that $\mathfrak{{O}%
}(n,P;\Omega,\Omega_{\mathcal{K}})$\ is isomorphic to the stable homotopy
group $\lim_{\ell\rightarrow\infty}\pi_{n+\ell}\left(  T(\widehat{\gamma
}_{\boldsymbol{\Omega}_{\mathcal{K}}^{1}}^{\ell})\right)  $. If we apply the
relative h-principle in Theorem 9.1 to \cite[Theorem 9.2]{Sady2} due to
Sadykov, then it follows that $\mathfrak{{O}}(n,P;\Omega,\Omega_{\mathcal{K}%
})$\ is isomorphic to $\pi_{n+\ell}\left(  T(\widehat{\gamma}%
_{\boldsymbol{\Omega}_{\mathcal{K}}^{m}}^{\ell})\right)  $ also in the
dimensions $n\geqq p\geqq2$ as well as $n<p$, where $\widehat{\gamma
}_{\boldsymbol{\Omega}_{\mathcal{K}}^{m}}^{\ell}=(\pi_{G_{n+m}}^{k})^{\ast
}(\widehat{\gamma}_{G_{n+m}}^{\ell})|_{\boldsymbol{\Omega}_{\mathcal{K}}^{m}}$
and $\ell$ and $m$ are sufficiently large integers. By using these stable
homotopy groups, we will induce the space $B_{\mathfrak{O}}$ by applying the
$S$-duality in \cite{SpaDual} in Section 8. In Section 9 we will see that the
h-principle in (h-P) holds for a very wide class of $\mathcal{K}$-invariant
open sets $\Omega(n,p)$. Let $\Sigma^{I}(n,p)$ denote the Boardman manifold in
$J^{k}(n,p)$ with Boardman symbol $I=(i_{1},\cdots,i_{k})$ defined in
\cite{Board}. An important example of $\Omega(n,p)$ is the open subset
$\Omega^{I}(n,p)$ which is the union of all $\Sigma^{K}(n,p)$ with $K\leqq I$
in the lexicographic order. Th results of the paper show the importance of the
homotopy type of $\Omega(n,p)$. In \cite{Omega10}, \cite{FoldSurg},
\cite{FoldRIMS} and \cite{Bo10Topo} we have described the homotopy type of
$\Omega^{i,0}(n,p)$\ for $i=\max\{n-p+1,1\}$ in terms of orthogonal groups and
Stiefel manifolds. Although available only in the case of fold-maps, we will
construct a simpler spectrum in Section 10 whose stable homotopy group is a
direct summand of $\mathfrak{O}(n,P;\Omega^{i,0},\Omega^{i,0})$ (respectively
$\mathfrak{N}(n,P;\Omega^{i,0},\Omega^{i,0})$) and also construct the
corresponding classifying space. This result is a refinement of \cite[Theorem
0.3]{FoldRIMS}. Let $F_{m}$ denote the space of all base point preserving maps
of the $m$-sphere $S^{m}$ and let $F=\lim_{m\rightarrow\infty}F_{m}$. When
$n=p\geqq1$ and $P$ is closed and oriented, we will prove that there exists an
isomorphism of $\mathfrak{O}(n,P;\Omega^{1,0},\Omega^{1,0})$ onto $[P,F]$,
which has been given in \cite{FoldKyoto} and \cite{FoldRIMS} from a different
point of view. Therefore, we may assert that the topology of $B_{\mathfrak{O}%
}$ will be important in connection with the canonical homomorphism of
$\mathfrak{O}(n,S^{n};\Omega^{1,0},\Omega^{1,0})$ to $\mathfrak{O}%
(n,S^{n};\Omega,\Omega_{\mathcal{K}})$ and the map $F\rightarrow
B_{\mathfrak{O}}$. We should refer to Chess\cite{Che}, in which $\mathfrak{O}%
(n,\mathbb{R}^{n};\Omega^{1,0},\Omega^{1,0})\approx\pi_{n}^{s}$ has been proved

The study of h-principles for $\Omega$-regular maps has a long history. Here,
we only refer the reader to Smale\cite{Smale}, Hirsch\cite{HirsImm},
Phillips\cite{P},\ Feit\cite{Feit}, Gromov(\cite{G1}, \cite{G2}),
\`{E}lia\v{s}berg(\cite{E1}, \cite{E2}) and du Plessis(\cite{duPMCS},
\cite{duPRS}, \cite{duPContIn}) for the details and further references.

In \cite{Eliash} Eliashberg has studied the cobordisms of the solutions of the
first order differential relations such as Lagrange and Legendre immersions by
applying his h-principle. Sadykov\cite{Sady2} has studied and expressed the
cobordism group of $\Omega$-regular maps in terms of the stable homotopy group
as explained above under the assumption of the relative h-principle in a
formal approach.

In \cite{R-S} Rim\'{a}nyi and Sz\H{u}cs have constructed a certain classifying
space such that the group of the cobordism classes of smooth maps of
$n$-dimensional manifolds into $P$ having only a given class of $C^{\infty}$
stable singularities is described by the homotopy classes of $P$ to this space
in the case $n<p$ and in \cite{Szu} Sz\H{u}cs has developed further results.
The method of the construction of this classifying space is quite different
from ours of the space $B_{\mathfrak{O}}$ using h-principles in this paper. In
\cite{KalmarCob}, \cite{KalmarNEW} and \cite{KalmarNEW2}\ Kalm\'{a}r has
studied the cobordism groups of fold-maps in negative codimensions.

As another line of investigation of cobordisms of singular maps in which
h-principles are not available, we refer to Saeki\cite{Saek1},
Ikegami-Saeki\cite{I-S}, Saeki\cite{Saek}, Kalm\'{a}r\cite{Kalmar} and
Sadykov\cite{SadyP}.

We will define the homomorphisms $\mathfrak{n}^{(\Omega,\Omega_{\star})}$\ and
$\mathfrak{o}^{(\Omega,\Omega_{\star})}$ in Section 3. In Section 4 we will
prove Theorem 1.1. In Section 5 we will prove that $\Omega_{\mathcal{K}}^{m}$
is an open subset. In Section 6 we will prepare several results which are
necessary in the proofs of Theorems 1.2 and 1.3. In Section 7 we will prove
Theorem 1.2. In Section 8 we will explain how the space $B_{\mathfrak{O}}$ is
introduced. In Section 9 we will give a wide class of open sets $\Omega(n,p)$
which satisfy the h-principle in (h-P). In Section 10 we will construct a
simpler spectrum and then, the classifying space $B_{V}$ such that $[P,B_{V}]$
is a direct summand of the cobordism group for fold-maps by using Theorem 1.1
and the homotopy type of $\Omega^{n-p+1,0}(n,p)$.

\section{Preliminaries}

Given a fiber bundle $\pi^{\mathcal{E}}:\mathcal{E}\rightarrow X$ and a subset
$C$ in $X,$ we denote $(\pi^{\mathcal{E}})^{-1}(C)$ by $\mathcal{E}|_{C}.$ Let
$\pi^{\mathcal{F}}:\mathcal{F}\rightarrow Y$ be another fiber bundle. A map
$\tilde{b}:\mathcal{E}\rightarrow\mathcal{F}$ is called a fiber map covering a
map $b:X\rightarrow Y$ if $\pi^{\mathcal{F}}\circ\tilde{b}=b\circ
\pi^{\mathcal{E}}$ holds. The restriction $\tilde{b}|(\mathcal{E}%
|_{C}):\mathcal{E}|_{C}\rightarrow\mathcal{F}$ (or $\mathcal{F}|_{b(C)}$) is
denoted by $\tilde{b}|_{C}$. In particular, for a point $x\in X,$
$\mathcal{E}|_{x}$ and $\tilde{b}|_{x}$ are simply denoted by $\mathcal{E}%
_{x}$ and $\tilde{b}_{x}:\mathcal{E}_{x}\rightarrow\mathcal{F}_{b(x)}$
respectively. The trivial bundle $X\times\mathbb{R}^{\ell}$ is denoted by
$\varepsilon_{X}^{\ell}$ (\cite{Ste}).

Let $\mathcal{E}\rightarrow X$ and $\mathcal{F}\rightarrow Y$ be vector
bundles of dimensions $n$ and $p$ respectively. The origin of $\mathbb{R}^{m}$
is\ simply denoted by $0$ for any natural number $m$. We define the action of
$L^{k}(p)$ $\times$ $L^{k}(n)$ on $J^{k}(n,p)$ by $(j_{0}^{k}h_{1},j_{0}%
^{k}h_{2})\cdot j_{0}^{k}f=j_{0}^{k}(h_{1}\circ f\circ h_{2}^{-1})$. Let
$\mathcal{E}_{1}\rightarrow X_{1}$ and $\mathcal{F}_{1}\rightarrow Y_{1}$ be
other vector bundles of dimensions $n$ and $p$ respectively, and let
$\tilde{b}_{1}:\mathcal{E}\rightarrow\mathcal{E}_{1}$ and $\tilde{b}%
_{2}:\mathcal{F}\rightarrow\mathcal{F}_{1}$ be bundle maps covering
$b_{1}:X\rightarrow X_{1}$ and $b_{2}:Y\rightarrow Y_{1}$ respectively. Then
$\tilde{b}_{1}$ and $\tilde{b}_{2}$\ yield the isomorphisms $S^{i}%
(\mathcal{E}_{x})\rightarrow S^{i}(\mathcal{E}_{1,b_{1}(x)})$ and
$\mathcal{F}_{y}\rightarrow\mathcal{F}_{1,b_{2}(y)}$\ for any $x\in X$ and
$y\in Y$ for $1\leqq i\leqq k$ and hence, we have the bundle map
\begin{equation}
\mathbf{j}(\tilde{b}_{1},\tilde{b}_{2}):J^{k}(\mathcal{E},\mathcal{F}%
)\rightarrow J^{k}(\mathcal{E}_{1},\mathcal{F}_{1})
\end{equation}
covering $b_{1}\times b_{2}$. Then $\mathbf{j}(\tilde{b}_{1},\tilde{b}_{2})$
induces the bundle map $\mathbf{j}(\tilde{b}_{1},\tilde{b}_{2})_{\Omega
}:\Omega(\mathcal{E},\mathcal{F})\rightarrow\Omega(\mathcal{E}_{1}%
,\mathcal{F}_{1})$.

If we provide $N$ and $P$ with Riemannian metrics, then the Levi-Civita
connections induce the exponential maps $\exp_{N,x}:T_{x}N\rightarrow N$ and
$\exp_{P,y}:T_{y}P\rightarrow P$ for $x\in N$ and $y\in P$ respectively. In
dealing with the exponential maps we always consider the convex neighborhoods
(\cite{KoNo}). We define the smooth bundle map
\begin{equation}
J^{k}(N,P)\mathbf{\rightarrow}J^{k}(TN,TP)\text{ \ \ \ over }N\times P
\end{equation}
by sending $z=j_{x}^{k}f\in(\pi_{N}^{k}\times\pi_{P}^{k})^{-1}(x,y)$ to the
$k$-jet of $(\exp_{P,y})^{-1}\circ f\circ\exp_{N,x}$ at $\mathbf{0}\in T_{x}%
N$, which is regarded as an element of $J^{k}(T_{x}N,T_{y}P)(=J_{x,y}%
^{k}(TN,TP))$ (see \cite[Proposition 8.1]{KoNo} for the smoothness of
exponential maps). More strictly, (2.2) gives a smooth equivalence of the
fiber bundles under the structure group $L^{k}(p)\times L^{k}(n)$. Namely, it
gives a smooth reduction of the structure group $L^{k}(p)\times L^{k}(n)$ of
$J^{k}(N,P)$ to $O(p)\times O(n)$, which is the structure group of
$J^{k}(TN,TP)$. Let $\Sigma^{I}(N,P)$ denote the Boardman manifold in
$J^{k}(N,P)$, where $I=(i_{1},\cdots,i_{k})$ is a Boardman symbol such that
$n\geqq i_{1}\geqq\cdots\geqq i_{k}\geqq0$ (see \cite{Board}, \cite{LevSDM}
and \cite{MathTB}). Let $\Omega^{I}(N,P)$ denote the open subset which is the
union of all $\Sigma^{K}(N,P)$ with $K\leqq I$ in the lexicographic order. Let
$\Sigma^{I}(TN,TP)$ and $\Omega^{I}(TN,TP)$ be the subbundles of
$J^{k}(TN,TP)$ associated to $\Sigma^{I}(n,p)$ and $\Omega^{I}(n,p)$, which
are identified with $\Sigma^{I}(N,P)$ and $\Omega^{I}(N,P)$ under (2.2), respectively.

Let $f:X\rightarrow Y$ be a continuous map. If $f_{\ast}:\pi_{i}%
(X)\rightarrow\pi_{i}(Y)$ is an isomorphism for $0\leqq i<m$ and an
epimorphism for $i=m$, then we call $f$ a homotopy $m$-equivalence in this paper.

\section{Homomorphisms $\mathfrak{n}^{(\Omega,\Omega_{\star})}$ and
$\mathfrak{o}^{(\Omega,\Omega_{\star})}$}

Let $\Omega(n,p)$ and $\Omega_{\star}(n+1,p+1)$ be open subsets given in
Theorem 1.1.

As usual we provide $\mathfrak{N}(n,P;\Omega,\Omega_{\star})$ and
$\mathfrak{O}(n,P;\Omega,\Omega_{\star})$\ with the structures of monoids.
Namely, given two $\Omega$-regular maps $f_{i}:N_{i}\rightarrow P$ ($i=0,1$),
we define the sum $[f_{0}]+[f_{1}]$ to be the cobordism class of the $\Omega
$-regular map $f:N_{0}\cup N_{1}\rightarrow P$ defined by $f|N_{i}=f_{i}$. We
proceed the arguments commonly in the unoriented case and the oriented case in
Sections 3 and 4, because in the oriented case we only need to provide
manifolds and vector bundles concerned with the orientations. The
orientability of $P$ is not necessary even in the oriented case in Theorems
1.1 and 1.2. We will need the orientability of $P$ at the end of Section 8.

We will identify $\mathbb{R}^{m}$\ with $S^{m}\backslash\{(0,\cdots,0,1)\}$
for positive integers $m$ in the following. Let $M$ be an $m$-dimensional
compact manifold such that $M$ should be oriented in the oriented case. Let
$\ell\gg m$. Take an embedding $e_{M}:M\rightarrow\mathbb{R}^{\ell+m}$ and
identify $M$ with $e_{M}(M)$. Let $c_{M}:M\rightarrow G_{m}$ be the
classifying map defined by sending a point $x\in M$ to the $m$-plane
$T_{x}M\in G_{m}$. Let $\nu_{M}$\ be the orthogonal normal bundle of $M$ in
$\mathbb{R}^{\ell+m}$. Let $c_{TM}:TM\rightarrow\gamma_{G_{m}}^{m}$
(respectively $c_{\nu_{M}}:\nu_{M}\rightarrow\widehat{\gamma}_{G_{m}}^{\ell}$)
be the bundle map covering the classifying map $c_{M}:M\rightarrow G_{m}$,
which is defined by sending a vector $\mathbf{v}$ of $T_{x}M$ (respectively
$\mathbf{w\in}\nu_{M,x}$) to $(T_{x}M,\mathbf{v})$ (respectively
$(T_{x}M,\mathbf{w})$). Then we have the canonical trivializations
$t_{M}:TM\oplus\nu_{M}\rightarrow\varepsilon_{M}^{\ell+m}$ and $t_{G_{m}%
}:\gamma_{G_{m}}^{m}\oplus\widehat{\gamma}_{G_{m}}^{\ell}\rightarrow
\varepsilon_{G_{m}}^{\ell+m}$. They induce $t_{G_{m}}\circ(c_{TM}\oplus
c_{\nu_{M}})\circ t_{M}^{-1}=c_{M}\times id_{\mathbb{R}^{\ell+m}}$. Let
$\mathcal{F}$ be any vector bundle of dimension $q$\ over $P$ and
$\Omega(m,q)$ be an\ $L^{k}(q)$ $\times$ $L^{k}(m)$-invariant open subspace of
$J^{k}(m,q)$. Let $\boldsymbol{\Omega}_{m}=\Omega(\gamma_{G_{m}}%
^{m},\mathcal{F})$ be the subbundle of $J^{k}(\gamma_{G_{m}}^{m},\mathcal{F})$
associated to $\Omega(m,q)$. If there is a map $s_{M}:M\rightarrow
\boldsymbol{\Omega}_{m}$ with $\pi_{G_{m}}^{k}\circ s_{M}$ being homotopic
(respectively equal) to $c_{M}$, then $c_{TM}$, $c_{\nu_{M}}$ and the
projection $\pi_{G_{m}}^{k}|\boldsymbol{\Omega}_{m}:\boldsymbol{\Omega}%
_{m}\rightarrow G_{m}$ induce the bundle maps $c_{s_{M}}^{TM}:TM\rightarrow
(\pi_{G_{m}}^{k})^{\ast}\gamma_{G_{m}}^{m}|_{\boldsymbol{\Omega}_{m}}$ and
$c_{s_{M}}^{\nu_{M}}:\nu_{M}\rightarrow(\pi_{G_{m}}^{k})^{\ast}\widehat
{\gamma}_{G_{m}}^{\ell}|_{\boldsymbol{\Omega}_{m}}$ covering $s_{M}$ such that
$t_{\boldsymbol{\Omega}_{m}}\circ(c_{s_{M}}^{TM}\oplus c_{s_{M}}^{\nu_{M}%
})\circ t_{M}^{-1}$ is homotopic (respectively equal) to $s_{M}\times
id_{\mathbb{R}^{\ell+m}}$, where $t_{\mathbf{\Omega}_{m}}:(\pi_{G_{m}}%
^{k})^{\ast}(\gamma_{G_{m}}^{m}\oplus\widehat{\gamma}_{G_{m}}^{\ell
})|_{\boldsymbol{\Omega}_{m}}\rightarrow\varepsilon_{\boldsymbol{\Omega}_{m}%
}^{\ell+m}$ is the trivialization induced from $t_{G_{m}}$.%
\[%
\begin{array}
[c]{ccccc}
&  & \Omega(\gamma_{G_{m}}^{m},\mathcal{F}) & \longleftarrow & (\pi_{G_{m}%
}^{k})^{\ast}\gamma_{G_{m}}^{m}|_{\boldsymbol{\Omega}_{m}}\text{ (or }%
(\pi_{G_{m}}^{k})^{\ast}\widehat{\gamma}_{G_{m}}^{\ell}|_{\boldsymbol{\Omega
}_{m}}\text{)}\\
& \nearrow{\scriptsize s}_{{\scriptsize M}} & \downarrow &  & \text{
\ \ }\uparrow{\scriptsize c}_{{\scriptsize s}_{M}}^{{\scriptsize TM}%
}{\scriptsize (}\text{{\scriptsize or} }{\scriptsize c}_{{\scriptsize s}_{M}%
}^{{\scriptsize \nu}_{M}}{\scriptsize )}\\
M & \longrightarrow & G_{m}\times P &  & TM\text{ (or }\nu_{M}\text{)}\\
& \searrow{\scriptsize c}_{{\scriptsize M}} & \downarrow &  & \text{
\ \ }\downarrow{\scriptsize c}_{{\scriptsize TM}}{\scriptsize (}%
\text{{\scriptsize or} }{\scriptsize c}_{{\scriptsize \nu}_{M}}{\scriptsize )}%
\\
&  & G_{m} & \longleftarrow & \gamma_{G_{m}}^{m}\text{ (or }\widehat{\gamma
}_{G_{m}}^{\ell}\text{)}%
\end{array}
\]

Let $A_{m,q}$ refer to the grassmann manifold $G_{m,q}$\ (respectively
oriented grassmann manifold $\widetilde{G}_{m,q}$) of all $m$-subspaces
(respectively oriented $m$-subspaces) of $\mathbb{R}^{m+q}$.\ Let
$\gamma_{A_{m,q}}^{m}$ and $\widehat{\gamma}_{A_{m,q}}^{q}$\ denote the
canonical vector bundles of dimensions $m$ and $q$\ over $A_{m,q}%
$\ respectively such that $\gamma_{A_{m,q}}^{m}\oplus\widehat{\gamma}%
_{A_{m,q}}^{q}$ is the trivial bundle $\varepsilon_{A_{m,q}}^{m+q}$. Let
$T(\widehat{\gamma}_{A_{m,q}}^{q})$ denote the Thom space of $\widehat{\gamma
}_{A_{m,q}}^{q}$. Here, we see that the spaces $\{T((\pi_{G_{m}}^{k})^{\ast
}\widehat{\gamma}_{G_{m}}^{\ell}|_{\boldsymbol{\Omega}_{m}})\}_{\ell}$
constitute a spectrum. Let $j:G_{m}\rightarrow A_{m,\ell+1}$ denote the
injection mapping an $m$-plane $a$ in $\mathbb{R}^{m+\ell}=\mathbb{R}^{m+\ell
}\times0$\ to the same $a$ as the $m$-plane in $\mathbb{R}^{m+\ell+1}$. We
have the canonical bundle maps%
\[
b:\gamma_{G_{m}}^{m}\longrightarrow\gamma_{A_{m,\ell+1}}^{m}\text{ \ and
\ }\widehat{b}:\widehat{\gamma}_{G_{m}}^{\ell}\oplus\varepsilon_{G_{m}}%
^{1}\longrightarrow\widehat{\gamma}_{A_{m,\ell+1}}^{\ell+1}%
\]
covering $j$. Let $\boldsymbol{\Omega}_{m}^{\bullet}=\Omega(\gamma
_{A_{m,\ell+1}}^{m},\mathcal{F})$ be the open subbundle of $J^{k}%
(\gamma_{A_{m,\ell+1}}^{m},\mathcal{F})$ associated to $\Omega(m,q)$. We have
the bundle map%
\[
\mathbf{j}(b,id_{\mathcal{F}})_{\Omega}:\boldsymbol{\Omega}_{m}\longrightarrow
\boldsymbol{\Omega}_{m}^{\bullet}%
\]
covering $j$. Then $\widehat{b}$ induces the bundle map
\[
\mathbf{b}:(\pi_{G_{m}}^{k})^{\ast}(\widehat{\gamma}_{G_{m}}^{\ell}%
\oplus\varepsilon_{G_{m}}^{1})|_{\boldsymbol{\Omega}_{m}}\longrightarrow
(\pi_{A_{m,\ell+1}}^{k})^{\ast}(\widehat{\gamma}_{A_{m,\ell+1}}^{\ell
+1})|_{\boldsymbol{\Omega}_{m}^{\bullet}}%
\]
covering $\mathbf{j}(b,id_{\mathcal{F}})_{\Omega}$. Since $T((\pi_{G_{m}}%
^{k})^{\ast}(\widehat{\gamma}_{G_{m}}^{\ell}\oplus\varepsilon_{G_{m}}%
^{1})|_{\boldsymbol{\Omega}_{m}})=T((\pi_{G_{m}}^{k})^{\ast}(\widehat{\gamma
}_{G_{m}}^{\ell})|_{\boldsymbol{\Omega}_{m}})\wedge S^{1}$, we have the
associated map%
\[
T(\mathbf{b}):T((\pi_{G_{m}}^{k})^{\ast}(\widehat{\gamma}_{G_{m}}^{\ell
})|_{\boldsymbol{\Omega}_{m}})\wedge S^{1}\longrightarrow T((\pi_{A_{m,\ell
+1}}^{k})^{\ast}(\widehat{\gamma}_{A_{m,\ell+1}}^{\ell+1}%
)|_{\boldsymbol{\Omega}_{m}^{\bullet}}).
\]
This shows the assertion.

Take an embedding $e_{N}:N\rightarrow\mathbb{R}^{n+\ell}\mathbb{\subset
}S^{n+\ell}$ and apply the above notation. Then we have the bundle map%
\[
\mathbf{j}(c_{TN},id_{TP})_{\Omega}:\Omega(TN,TP)\rightarrow\boldsymbol{\Omega
}=\Omega(\gamma_{G_{n}}^{n},TP).
\]
Let $f:N\rightarrow P$ be an $\Omega$-regular map with the jet extension
$j^{k}f:N\rightarrow\Omega(TN,TP)$ and let $s_{N}=\mathbf{j}(c_{TN}%
,id_{TP})_{\Omega}\circ j^{k}f$. Then we have the composite%
\[
\Delta^{(\Omega,\Omega_{\star})}\circ s_{N}:N\rightarrow\boldsymbol{\Omega
}_{\star}=\Omega_{\star}(\gamma_{G_{n+1}}^{n+1},TP\oplus\varepsilon_{P}^{1})
\]
and%
\[
\mathbf{b}(\widehat{\gamma})^{(\Omega,\Omega_{\star})}\circ c_{s_{N}}^{\nu
_{N}}:\nu_{N}\rightarrow\widehat{\gamma}_{\boldsymbol{\Omega}_{\star}}^{\ell}%
\]
covering $\Delta^{(\Omega,\Omega_{\star})}\circ s_{N}$. We denote the
Pontrjagin-Thom construction for $e_{N}$ by $a_{N}:S^{n+\ell}\rightarrow
T(\nu_{N})$ (\cite{Thom}). Let $\mathfrak{C}(n,P;\Omega,\Omega_{\star}%
)$\ refer to $\mathfrak{N}(n,P;\Omega,\Omega_{\star})$\ or $\mathfrak{O}%
(n,P;\Omega,\Omega_{\star})$, let $\mathrm{\operatorname{Im}}\left(
T(\mathbf{b}(\widehat{\gamma})^{(\Omega,\Omega_{\star})})\right)  $\ refer to
$\mathrm{\operatorname{Im}}^{\mathfrak{N}}\left(  T(\mathbf{b}(\widehat
{\gamma})^{(\Omega,\Omega_{\star})})\right)  $\ or $\mathrm{\operatorname{Im}%
}^{\mathfrak{O}}\left(  T(\mathbf{b}(\widehat{\gamma})^{(\Omega,\Omega_{\star
})})\right)  $ and let $\omega$ refer to $\mathfrak{n}^{(\Omega,\Omega_{\star
})}$ or $\mathfrak{o}^{(\Omega,\Omega_{\star})}$, depending on whether we work
in the unoriented case or oriented case.\ We now define the maps%
\begin{equation}
\omega:\mathfrak{C}(n,P;\Omega,\Omega_{\star})\longrightarrow
\mathrm{\operatorname{Im}}\left(  T(\mathbf{b}(\widehat{\gamma})^{(\Omega
,\Omega_{\star})})\right) \nonumber
\end{equation}
by mapping the cobordism class $[f]$ of $\mathfrak{C}(n,P;\Omega,\Omega
_{\star})$ to the homotopy class of $T(\mathbf{b}(\widehat{\gamma}%
)^{(\Omega,\Omega_{\star})})\circ T(c_{s_{N}}^{\nu_{N}})\circ a_{N}$.

We have to prove that $\omega([f])$ does not depend on the choice of a
representative $f$.

\begin{lemma}
Suppose that two $\Omega$-regular maps $f_{i}:N_{i}\rightarrow P$ $(i=0,1)$
are $\Omega_{\star}$-cobordant. Then we have $\mathfrak{n}^{(\Omega
,\Omega_{\star})}([f_{0}])=\mathfrak{n}^{(\Omega,\Omega_{\star})}([f_{1}])$.
If $N_{i}$ are oriented and $f_{i}$ $(i=0,1)$ are oriented $\Omega_{\star}%
$-cobordant, then we have $\mathfrak{o}^{(\Omega,\Omega_{\star})}%
([f_{0}])=\mathfrak{o}^{(\Omega,\Omega_{\star})}([f_{1}])$.
\end{lemma}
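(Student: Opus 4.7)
The plan is to run the standard Pontrjagin--Thom cobordism argument in the fibered setting, using the cobordism $\mathfrak{C}$ itself to produce a homotopy in $T(\widehat{\gamma}_{\boldsymbol{\Omega}_{\star}}^{\ell})$ between the two constructions $T(\mathbf{b}(\widehat{\gamma})^{(\Omega,\Omega_{\star})})\circ T(c_{s_{N_{i}}}^{\nu_{N_{i}}})\circ a_{N_{i}}$ ($i=0,1$). The oriented version is parallel; one just substitutes $\widetilde{G}_{m,\ell}$ for $G_{m,\ell}$ and keeps track of orientations on $W$ and its normal bundle.

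First I would fix the geometric data. Choose embeddings $e_{N_{i}}:N_{i}\hookrightarrow\mathbb{R}^{n+\ell}$ used to define $\omega([f_{i}])$ and then, by standard extension/transversality of proper embeddings of bordisms, produce a proper embedding $e_{W}:W\hookrightarrow\mathbb{R}^{n+\ell}\times\lbrack0,1]$ such that $e_{W}^{-1}(\mathbb{R}^{n+\ell}\times\{i\})=N_{i}$ and such that on the collar $N_{i}\times J_{i}$ (with $J_{0}=[0,\epsilon]$ and $J_{1}=[1-\epsilon,1]$) the embedding coincides with $e_{N_{i}}\times id_{J_{i}}$. This gives a classifying map $c_{W}:W\rightarrow G_{n+1,\ell}$ whose restriction to each collar is $i^{G}\circ c_{N_{i}}$, and a normal bundle $\nu_{W}$ of rank $\ell$ with $\nu_{W}|_{N_{i}}=\nu_{N_{i}}$.

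Next I would feed this into the $\Omega_{\star}$-jet construction of \S3. Since $\mathfrak{C}$ is $\Omega_{\star}$-regular and the target splits as $T(P\times\lbrack0,1])=TP\oplus\varepsilon_{P\times\lbrack0,1]}^{1}$, the bundle map $\mathbf{j}(c_{TW},id_{TP\oplus\varepsilon^{1}})_{\Omega_{\star}}$ composed with $j^{k}\mathfrak{C}$ gives a section $s_{W}:W\rightarrow\boldsymbol{\Omega}_{\star}$ covering $c_{W}$ (the target $P\times\lbrack0,1]$ is retracted onto $P$ in the fibers, since the stabilization in $\Omega_{\star}$ absorbs the $\varepsilon^{1}$ factor). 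The essential compatibility is:
\[
s_{W}|_{N_{i}\times J_{i}}=\bigl(\Delta^{(\Omega,\Omega_{\star})}\circ s_{N_{i}}\bigr)\times id_{J_{i}},
\]
which is immediate from $\mathfrak{C}|_{N_{i}\times J_{i}}=f_{i}\times id_{J_{i}}$, the definition of $i_{+1}$, and the compatibility of the jet constructions with products. Lifting $s_{W}$ through $c_{\nu_{W}}$ yields $c_{s_{W}}^{\nu_{W}}:\nu_{W}\rightarrow\widehat{\gamma}_{\boldsymbol{\Omega}_{\star}}^{\ell}$ whose restriction to $\nu_{W}|_{N_{i}}=\nu_{N_{i}}$ equals $\mathbf{b}(\widehat{\gamma})^{(\Omega,\Omega_{\star})}\circ c_{s_{N_{i}}}^{\nu_{N_{i}}}$.

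Finally I would run Pontrjagin--Thom on the properly embedded $W$. The collapse map for $e_{W}$ gives $a_{W}:S^{n+\ell}\wedge\lbrack0,1]_{+}\rightarrow T(\nu_{W})$ whose restrictions to $S^{n+\ell}\times\{0\}$ and $S^{n+\ell}\times\{1\}$ are $a_{N_{0}}$ and $a_{N_{1}}$ composed with the inclusions $T(\nu_{N_{i}})\hookrightarrow T(\nu_{W})$. Composing with $T(\mathbf{b}(\widehat{\gamma})^{(\Omega,\Omega_{\star})})\circ T(c_{s_{W}}^{\nu_{W}})$ and using the identifications from the previous paragraph produces an explicit based homotopy in $T(\widehat{\gamma}_{\boldsymbol{\Omega}_{\star}}^{\ell})$ between the representatives of $\omega([f_{0}])$ and $\omega([f_{1}])$, which is exactly the claim. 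In the oriented case $W$ is oriented and $\nu_{W}$ inherits a canonical orientation from the standard orientation of $\mathbb{R}^{n+\ell+1}$, so the Pontrjagin--Thom construction lands in the oriented Thom spectrum and the argument is unchanged.

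The main obstacle I anticipate is the bookkeeping in the compatibility step: one has to verify that the bundle map $\mathbf{j}(c_{TW},id)_{\Omega_{\star}}$ built from the cobordism embedding restricts over the collars to the map $\Delta^{(\Omega,\Omega_{\star})}$ built from $i_{+1}$, and similarly that the classifying map $c_{W}$ restricts to $i^{G}\circ c_{N_{i}}$ (up to the identification of the distinguished unit vector). Both are natural but need the embedding to be chosen compatibly with the product structure on the collar and with the identification of $\mathbb{R}^{n+\ell}$ as a hyperplane in $\mathbb{R}^{n+\ell+1}$; once this is arranged, the rest of the proof is formal.
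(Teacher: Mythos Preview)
Your approach matches the paper's: embed $W$ in $\mathbb{R}^{n+\ell}\times[0,1]$ with product structure on the collars, form $s_W=\mathbf{j}(c_{TW},id)_{\Omega_\star}\circ j^k\mathfrak{C}$, and let $T(c_{s_W}^{\nu_W})\circ a_W$ furnish the homotopy between the two representatives. Two small corrections: in your final step drop the extra factor $T(\mathbf{b}(\widehat{\gamma})^{(\Omega,\Omega_\star)})$, since $c_{s_W}^{\nu_W}$ already takes values in $\widehat{\gamma}_{\boldsymbol{\Omega}_\star}^\ell$ and no further push is needed; and note that the paper also explicitly disposes of the dependence on the choice of $e_{N}$ by first suspending so that both $e_{N_0}$ and $e_{N_1}$ land in a common $\mathbb{R}^{n+\ell}$, a point you leave implicit.
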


\begin{proof}
We first have to prove that $\omega([f])$ does not depend on the choice of an
embedding $e_{N}$. Let $\ell$ be a sufficiently large integer. Let
$e_{N}:N\rightarrow\mathbb{R}^{n+\ell}$\ be an embedding of $N$. For a
nonnegative integer $m$, let $e_{N}^{m}$ be the composite of $e_{N}$ and the
inclusion $\mathbb{R}^{n+\ell}=\mathbb{R}^{n+\ell}\times0\rightarrow
\mathbb{R}^{n+\ell+m}$. Then we may regard $\nu_{N}\oplus\varepsilon_{N}^{m}%
$\ as the normal bundle of $e_{N}^{m}$ and the Pontrjagin-Thom construction
for $e_{N}^{m}$ is the $m$-th suspension $S^{n+\ell+m}\rightarrow T(\nu
_{N}\oplus\varepsilon_{N}^{m})=T(\nu_{N})\wedge S^{m}$ of that for $e_{N}$.

Given two embeddings $e_{N_{i}}:N_{i}\rightarrow\mathbb{R}^{n+\ell_{i}}$
(\thinspace$i=0,1$) with $\ell_{1}=\ell_{0}+m$ for $m\geqq0$, by the above
argument we consider $e_{N_{0}}^{m}$ in place of $e_{N_{0}}$. It will be
proved by the argument below that $\omega([f])$ does not depend on the choice
of $e_{N_{0}}^{m}$ and $e_{N_{1}}$.

Let $\epsilon$ be a sufficiently small positive real number. Let
$I(0,\epsilon)$\ and $I(1,\epsilon)$\ denote the intervals $[0,\epsilon]$ and
$[1-\epsilon,1]$ respectively. Let $\mathfrak{C}:(W,\partial W)\rightarrow
(P\times\lbrack0,1],P\times0\cup P\times1)$ be an $\Omega_{\star}$-cobordism
of $f_{0}$ and $f_{1}$. Take embeddings $e_{N_{i}}:N_{i}\rightarrow
\mathbb{R}^{n+\ell}$ and $e_{W}:W\rightarrow\mathbb{R}^{n+\ell}\times
\lbrack0,1]$, and let us identify as $N_{i}=e_{N_{i}}(N_{i})=e_{N_{i}}%
(N_{i})\times\{i\}$, $W=e_{W}(W)$ and $P=P\times\{i\}$. Then we may assume
that for $i=0,1$,

(i) $W\cap(S^{n+\ell}\times I(i,\epsilon))=N_{i}\times I(i,\epsilon),$

(ii) $e_{W}|N_{i}\times I(i,\epsilon)=e_{N_{i}}\times id_{I(i,\epsilon)}$,

(iii) $\mathfrak{C}|N_{i}\times I(i,\epsilon)=f_{i}\times id_{I(i,\epsilon)},$

(iv) $j^{k}\mathfrak{C}|N_{i}\times\{i\}=j^{k}(f_{i}\times id_{\mathbb{R}%
})|N_{i}\times\{i\}$ under $N_{i}=N_{i}\times\{i\}$ and $P=P\times\{i\}.$

\noindent In the identification $TW|_{N_{i}}=TN_{i}\oplus\varepsilon_{N_{i}%
}^{1}$, the positive direction of $\varepsilon_{N_{0}}^{1}$ should correspond
to the inward normal direction, and that of $\varepsilon_{N_{1}}^{1}$ should
correspond to the outward normal direction. Then we may assume that the
trivializations%
\[
t_{N_{i}}:TN_{i}\oplus\ \nu_{N_{i}}\rightarrow\varepsilon_{N_{i}}^{n+\ell
}\text{ \ \ and \ \ }t_{W}:TW\oplus\ \nu_{W}\rightarrow\varepsilon_{W}%
^{n+\ell+1}%
\]
satisfy $t_{W}|_{N_{i}}=(t_{N_{i}}\oplus id_{\varepsilon_{N_{i}}^{1}}%
)\circ(id_{TN_{i}}\oplus\mathbf{k}_{N_{i}}^{\backsim})$, where $\mathbf{k}%
_{N_{i}}^{\backsim}:\varepsilon_{N_{i}}^{1}\oplus\nu_{N_{i}}\rightarrow
\nu_{N_{i}}\oplus\varepsilon_{N_{i}}^{1}$ is the map interchanging the
components $\varepsilon_{N_{i}}^{1}$ and $\nu_{N_{i}}$. Let $c_{\gamma_{G_{n}%
}^{n}\oplus\varepsilon_{G_{n}}^{1}}:\gamma_{G_{n}}^{n}\oplus\varepsilon
_{G_{n}}^{1}\rightarrow\gamma_{G_{n+1}}^{n+1}$ denote the bundle map covering
$i^{G}:G_{n}\rightarrow G_{n+1}$ defined by $c_{\gamma_{G_{n}}^{n}%
\oplus\varepsilon_{G_{n}}^{1}}(V_{x}\oplus(x,t))=V_{i^{G}(x)}+t\mathbf{e}%
_{n+\ell+1}$ for $x\in G_{n}$, $V_{x}\in(\gamma_{G_{n}}^{n})_{x}$ and
$t\in\mathbb{R}$. Let $s_{N_{i}}=\mathbf{j}(c_{TN_{i}},id_{TP})_{\Omega}\circ
j^{k}f_{i}$\ and $s_{W}=\mathbf{j}(c_{TW},id_{T(P\times\lbrack0,1])}%
)_{\Omega_{\star}}\circ j^{k}\mathfrak{C}$. Then we have that%
\[%
\begin{array}
[c]{l}%
c_{\gamma_{G_{n}}^{n}\oplus\varepsilon_{G_{n}}^{1}}\circ(c_{TN_{i}}%
\oplus(c_{N_{i}}\times id_{\mathbb{R}}))=c_{TW}|TN_{i}\oplus\varepsilon
_{N_{i}}^{1},\\
c_{s_{W}}^{\nu_{W}}|_{N_{i}}=\mathbf{b}(\widehat{\gamma})^{(\Omega
,\Omega_{\star})}\circ c_{s_{N_{i}}}^{\nu_{N_{i}}}.
\end{array}
\]
Let $a_{W}:S^{n+\ell}\times\lbrack0,1]\rightarrow T(\nu_{W})$ be the
Pontrjagin-Thom construction for $e_{W}$. Under the identifications%
\begin{align*}
\Omega_{\star}(TW,T(P\times\lbrack0,1]))  &  =\Omega_{\star}(TW,TP\oplus
\varepsilon_{P}^{1})\times\lbrack0,1],\\
\Omega_{\star}(\gamma_{G_{n+1}}^{n+1},T(P\times\lbrack0,1]))  &
=\Omega_{\star}(\gamma_{G_{n+1}}^{n+1},TP\oplus\varepsilon_{P}^{1}%
)\times\lbrack0,1],
\end{align*}
the composite $T(c_{s_{W}}^{\nu_{W}})\circ a_{W}$ gives a homotopy between
$T(\mathbf{b}(\widehat{\gamma})^{(\Omega,\Omega_{\star})})\circ T(c_{s_{N_{0}%
}}^{\nu_{N_{0}}})\circ a_{N_{0}}$ and $T(\mathbf{b}(\widehat{\gamma}%
)^{(\Omega,\Omega_{\star})})\circ T(c_{s_{N_{1}}}^{\nu_{N_{1}}})\circ
a_{N_{1}}.$ This proves $\omega([f_{0}])=\omega([f_{1}])$.
\end{proof}

\section{$\mathfrak{n}^{(\Omega,\Omega_{\star})}$ and $\mathfrak{o}%
^{(\Omega,\Omega_{\star})}$ are isomorphisms}

We prove Theorem 1.1 in this section. Let $\Omega(n,p)$ and $\Omega_{\star
}(n+1,p+1)$ be the open subsets given in Theorem 1.1.

\begin{proof}
[Proof of Theorem 1.1]We use the notation in the proof of Lemma 3.1. We first
prove that $\omega$ is injective. For this, take two $\Omega$-regular maps
$f_{i}:N_{i}\rightarrow P$ $(i=0,1)$ such that $\omega([f_{0}])=\omega
([f_{1}])$. Recall the map $T(\mathbf{b}(\widehat{\gamma})^{(\Omega
,\Omega_{\star})})\circ T(c_{s_{N_{i}}}^{\nu_{N_{i}}})\circ a_{N_{i}}$ which
represents $\omega([f_{i}])$. There is a homotopy $H:S^{n+\ell}\times
\lbrack0,1]\rightarrow T(\widehat{\gamma}_{\boldsymbol{\Omega}_{\star}}^{\ell
})$ such that if $\epsilon$ is sufficiently small, then we have, for $i=0,1$,

(i) $H|S^{n+\ell}\times I(i,\epsilon)=T(\mathbf{b}(\widehat{\gamma}%
)^{(\Omega,\Omega_{\star})})\circ T(c_{s_{N_{i}}}^{\nu_{N_{i}}})\circ
a_{N_{i}}\circ(\pi_{S^{n+\ell}}|S^{n+\ell}\times I(i,\epsilon)),$

(ii) $H$ is smooth around $H^{-1}(\boldsymbol{\Omega}_{\star})$ and is
transverse to $\boldsymbol{\Omega}_{\star}$.

\noindent We set $W=H^{-1}(\boldsymbol{\Omega}_{\star})$. Then we have

(iii) $W\cap(S^{n+\ell}\times I(i,\epsilon))=N_{i}\times I(i,\epsilon)$,

(iv) $H|N_{i}\times I(i,\epsilon)=\Delta^{(\Omega,\Omega_{\star})}\circ
s_{N_{i}}\circ(\pi_{N_{i}}|(N_{i}\times I(i,\epsilon))$,

(v) $TW|_{N_{i}\times I(i,\epsilon)}=T(N_{i}\times I(i,\epsilon))=(TN_{i}%
\oplus\varepsilon_{N_{i}}^{1})\times I(i,\epsilon)$,

(vi) $\nu_{W}|_{N_{i}\times I(i,\epsilon)}=\nu_{N_{i}}\times I(i,\epsilon)$.

\noindent By (ii) we have the bundle map $c_{\thicksim}^{\nu_{W}}:\nu
_{W}\rightarrow\widehat{\gamma}_{\boldsymbol{\Omega}_{\star}}^{\ell}$ covering
$H|W:W\rightarrow\boldsymbol{\Omega}_{\star}$ such that

(vii) $c_{\thicksim}^{\nu_{W}}|_{N_{i}\times I(i,\epsilon)}=\mathbf{b}%
(\widehat{\gamma})^{(\Omega,\Omega_{\star})}\circ c_{s_{N_{i}}}^{\nu_{N_{i}}%
}\circ(\pi_{\nu_{N_{i}}}|\nu_{N_{i}}\times I(i,\epsilon))$ by (i) and (iv).

\noindent By \cite[Proposition 3.3]{FoldSurg} we obtain a bundle map of
$TW\oplus\varepsilon_{W}^{3}$\ to $(\pi_{G_{n+1}}^{k})^{\ast}(\gamma_{G_{n+1}%
}^{n+1}\oplus\varepsilon_{G_{n+1}}^{3})|_{\boldsymbol{\Omega}_{\star}}$\ with
a required property concerning the trivialization. By this property and the
dimensional reason, we obtain a bundle map%
\[
c_{\thicksim}^{TW}:TW\rightarrow(\pi_{G_{n+1}}^{k})^{\ast}(\gamma_{G_{n+1}%
}^{n+1})|_{\boldsymbol{\Omega}_{\star}}%
\]
covering $H|W:W\rightarrow\boldsymbol{\Omega}_{\star}$ induced from the above
bundle map such that $t_{\boldsymbol{\Omega}_{\star}}\circ(c_{\thicksim}%
^{TW}\oplus c_{\thicksim}^{\nu_{W}})\circ t_{W}^{-1}$ is homotopic to
$(H|W)\times id_{\mathbb{R}^{n+\ell+1}}$. Since $\gamma_{G_{n+1}}^{n+1}$ is
the universal bundle $(\ell\gg n)$, $c_{\thicksim}^{TW}$ is regarded as
$c_{H|W}^{TW}$. Let $\mathbf{b}(\gamma\oplus\varepsilon^{1})^{(\Omega
,\Omega_{\star})}:(\pi_{G_{n}}^{k})^{\ast}(\gamma_{G_{n}}^{n}\oplus
\varepsilon_{G_{n}}^{1})|_{\boldsymbol{\Omega}}\rightarrow(\pi_{G_{n+1}}%
^{k})^{\ast}(\gamma_{G_{n+1}}^{n+1})|_{\boldsymbol{\Omega}_{\star}}$ covering
$\Delta^{(\Omega,\Omega_{\star})}$ be the bundle map induced from
$c_{\gamma_{G_{n}}^{n}\oplus\varepsilon_{G_{n}}^{1}}$ in the proof of Lemma
3.1. Then we may assume by (iv), (v) and (vi) that

(viii) $c_{\thicksim}^{TW}|_{(N_{i}\times I(i,\epsilon))}$ is equal to%
\[
\mathbf{b}(\gamma\oplus\varepsilon^{1})^{(\Omega,\Omega_{\star})}%
\circ(c_{s_{N_{i}}}^{TN_{i}}\oplus(c_{s_{N_{i}}}\times id_{\mathbb{R}}%
))\circ\pi_{TN_{i}\oplus\varepsilon_{N_{i}}}|((TN_{i}\oplus\varepsilon_{N_{i}%
})\times I(i,\epsilon)).
\]
Hence, $c_{W}$ is homotopic to $\pi_{G_{n+1}}^{k}\circ H|W$ relative to
$(N_{0}\times\lbrack0,\epsilon])\cup(N_{1}\times\lbrack1-\epsilon,1])$. Let
$\pi_{TP\oplus\varepsilon_{P}^{1}}:T(P\times\lbrack0,1])=(TP\oplus
\varepsilon_{P}^{1})\times\lbrack0,1]\rightarrow TP\oplus\varepsilon_{P}^{1}$
be the canonical bundle map covering the canonical projection $\pi_{P}%
:P\times\lbrack0,1]\rightarrow P$. Then we have the bundle map%
\begin{align*}
\mathbf{j}(c_{TW},\pi_{TP\oplus\varepsilon_{P}^{1}})_{\Omega_{\star}}  &
:\Omega_{\star}(TW,T(P\times\lbrack0,1]))=\Omega_{\star}(TW,TP\oplus
\varepsilon_{P}^{1})\times\lbrack0,1]\\
&  \longrightarrow\boldsymbol{\Omega}_{\star}=\Omega_{\star}(\gamma_{G_{n+1}%
}^{n+1},TP\oplus\varepsilon_{P}^{1})
\end{align*}
covering $c_{W}\times\pi_{P}$. Therefore, since $[0,1]$ is contractible, there
is a section $s_{W}:W\rightarrow\Omega_{\star}(TW,T(P\times\lbrack0,1]))$ such
that%
\begin{align*}
\pi_{P\times\lbrack0,1]}^{k}\circ s_{W}|N_{i}\times I(i,\epsilon)  &
=f_{i}\times id_{I(i,\epsilon)},\\
\pi_{P}\circ\pi_{P\times\lbrack0,1]}^{k}\circ s_{W}  &  =\pi_{P}^{k}%
\circ(H|W),
\end{align*}
and that $\mathbf{j}(c_{TW},\pi_{TP\oplus\varepsilon_{P}^{1}})_{\Omega_{\star
}}\circ s_{W}$ is homotopic to $H|W$ relative to $(N_{0}\times\lbrack
0,\epsilon])\cup(N_{1}\times\lbrack1-\epsilon,1])$.

Since $\Omega_{\star}(TW,T(P\times\lbrack0,1]))$ satisfies the relative
h-principle on the existence level, there exists an $\Omega_{\star}$-regular
map $\mathfrak{C}:W\rightarrow P\times\lbrack0,1]$ such that $\mathfrak{C}%
(x,t)=f_{0}(x)\times t$ for $0\leqq t\leqq\epsilon$, $\mathfrak{C}%
(x,t)=f_{1}(x)\times t$ for $1-\epsilon\leqq t\leqq1$ and that $j^{k}%
\mathfrak{C}$ is homotopic to $s_{W}$ relative to $(N_{0}\times\lbrack
0,\epsilon/2])\cup(N_{1}\times\lbrack1-\epsilon/2,1])$. This implies that the
$\Omega$-regular maps $f_{0}$ and $f_{1}$ are $\Omega_{\star}$-cobordant. This
proves that $\omega$ is injective.

We next prove that $\omega$ is surjective. Let an element $\widetilde{\alpha}%
$\ of $\mathrm{\operatorname{Im}}\left(  T(\mathbf{b}(\widehat{\gamma
})^{(\Omega,\Omega_{\star})})\right)  $ be represented by a map $\alpha
:S^{n+\ell}\rightarrow T(\widehat{\gamma}_{\boldsymbol{\Omega}}^{\ell})$ such
that $(T(\mathbf{b}(\widehat{\gamma})^{(\Omega,\Omega_{\star})}))_{\ast
}([\alpha])=\widetilde{\alpha}$. We may suppose that $\alpha$ is smooth around
$\alpha^{-1}(\boldsymbol{\Omega})$ and is transverse to $\boldsymbol{\Omega}$.
We set $N=\alpha^{-1}(\boldsymbol{\Omega})$. If $N=\emptyset$, then $[\alpha]$
must be a null element, although we can deform $\alpha$\ so that
$N\neq\emptyset$\ even in this case.\ Since $\alpha$ is transverse to
$\boldsymbol{\Omega}$, we have the bundle map $c_{\thicksim}^{\nu_{N}}:\nu
_{N}\rightarrow\widehat{\gamma}_{\boldsymbol{\Omega}}^{\ell}$ covering
$\alpha|N$. It follows from \cite[Proposition 3.3]{FoldSurg} that there exists
a bundle map%
\[
c_{\thicksim}^{TN\oplus\varepsilon_{N}^{3}}:TN\oplus\varepsilon_{N}%
^{3}\rightarrow(\pi_{G_{n}}^{k})^{\ast}(\gamma_{G_{n}}^{n}\oplus
\varepsilon_{G_{n}}^{3})|_{\boldsymbol{\Omega}}%
\]
covering $\alpha|N:N\rightarrow\boldsymbol{\Omega}$ such that the composite%
\begin{align*}
&  (t_{\boldsymbol{\Omega}}\oplus id_{\varepsilon_{\boldsymbol{\Omega}}^{3}%
})\circ(id_{(\pi_{G_{n}}^{k})^{\ast}(\gamma_{G_{n}}^{n})}\oplus\mathbf{k}%
_{G_{n}}^{\backsim})\\
&  \text{ \ \ \ \ \ \ \ \ \ \ \ \ \ \ \ \ \ }\circ(c_{\thicksim}%
^{TN\oplus\varepsilon_{N}^{3}}\oplus c_{\thicksim}^{\nu_{N}})\circ
(id_{TN}\oplus\mathbf{k}_{N}^{\backsim})\circ(t_{N}^{-1}\oplus id_{\varepsilon
_{N}^{3}})
\end{align*}
is homotopic to $(\alpha|N)\times id_{\mathbb{R}^{n+\ell+3}}$, where
$\mathbf{k}_{G_{n}}^{\backsim}:\varepsilon_{G_{n}}^{3}\oplus\widehat{\gamma
}_{G_{n}}^{\ell}\rightarrow\widehat{\gamma}_{G_{n}}^{\ell}\oplus
\varepsilon_{G_{n}}^{3}$ and $\mathbf{k}_{N}^{\backsim}:\nu_{N}\oplus
\varepsilon_{N}^{3}\rightarrow\varepsilon_{N}^{3}\oplus\nu_{N}$\ are the maps
interchanging the components respectively. Since $\gamma_{G_{n}}^{n}$ is the
universal bundle $(\ell\gg n)$, $c_{\thicksim}^{TN\oplus\varepsilon_{N}^{3}}$
is homotopic to $c_{\alpha|N}^{TN}\oplus((\alpha|N)\times id_{\mathbb{R}^{3}%
})$, and $t_{\boldsymbol{\Omega}}\circ(c_{\alpha|N}^{TN}\oplus c_{\thicksim
}^{\nu_{N}})\circ t_{N}^{-1}$ is homotopic to $(\alpha|N)\times id_{\mathbb{R}%
^{n+\ell}}$. Hence, $c_{N}$ is homotopic to $\pi_{G_{n}}^{k}\circ\alpha|N$. By
\cite[Proposition 3.3]{FoldSurg} again, $c_{\thicksim}^{\nu_{N}}$ and
$c_{\alpha|N}^{\nu_{N}}$ are homotopic as bundle maps $\nu_{N}\rightarrow
\widehat{\gamma}_{\boldsymbol{\Omega}}^{\ell}$. Since we have the bundle map%
\[
\mathbf{j}(c_{TN},id_{TP})_{\boldsymbol{\Omega}}:\Omega(TN,TP)\longrightarrow
\boldsymbol{\Omega}=\Omega(\gamma_{G_{n}}^{n},TP)
\]
covering $c_{N}\times id_{P}$, there is a section $s_{N}:N\rightarrow
\Omega(TN,TP)$ such that $\pi_{P}^{k}\circ s_{N}=\pi_{P}^{k}\circ\alpha|N$ and
$\mathbf{j}(c_{TN},id_{TP})_{\boldsymbol{\Omega}}\circ s_{N}$ is homotopic to
$\alpha|N.$ Since $\Omega(TN,TP)$ satisfies the h-principle, there exists an
$\Omega$-regular map $f:N\rightarrow P$ such that $j^{k}f$ is homotopic to
$s_{N}$. This implies that $\mathbf{j}(c_{TN},id_{TP})_{\boldsymbol{\Omega}%
}\circ j^{k}f$ and $\alpha|N$ are homotopic. This proves that
\begin{align*}
\omega([f])  &  =[T(\mathbf{b}(\widehat{\gamma})^{(\Omega,\Omega_{\star}%
)})\circ T(c_{\alpha|N}^{\nu_{N}})\circ a_{N}]\\
&  =[T(\mathbf{b}(\widehat{\gamma})^{(\Omega,\Omega_{\star})})\circ
T(c_{\thicksim}^{\nu_{N}})\circ a_{N}]\\
&  =[T(\mathbf{b}(\widehat{\gamma})^{(\Omega,\Omega_{\star})})\circ\alpha]\\
&  =\widetilde{\alpha}.
\end{align*}
This is what we want.
\end{proof}

Under the assumption of Theorem 1.1 $\mathfrak{C}(n,P;\Omega,\Omega_{\star})$
inherits the structure of an abelian group from the stable homotopy groups.
The null element is defined to be represented by an $\Omega$-regular map
$f:N\rightarrow P$, which has an $\Omega_{\star}$-cobordism $\mathfrak{C}%
:(W,\partial W)\rightarrow(P\times\lbrack0,1],P\times0)$ with $\partial W=N$
such that $\mathfrak{C}|N=f$ under the identification $P\times0=P$.

\section{Examples of $\Omega_{\star}(n+1,p+1)$}

In this section $n<p$ is not necessarily assumed. As an important example of
$\Omega_{\ast}(n+1,p+1)$ for $\Omega(n,p)$ we recall $\Omega_{\mathcal{K}}%
^{1}=\Omega_{\mathcal{K}}(n+1,p+1)$, which is the set consisting of all
$\mathcal{K}$-orbits $\mathcal{K}(i_{+1}(z))$ for $k$-jets $z\in\Omega(n,p)$.
Let $\mathcal{C}_{m}$ denote the ring of smooth function germs $(\mathbb{R}%
^{m},0)\rightarrow\mathbb{R}$ and let $\mathfrak{m}_{m}$ denote its maximal ideal.

\begin{lemma}
Let $i$ be a non-negative integer smaller than $n+1$. Then any $k$-jet
$w\in\Sigma^{i}(n+1,p+1)$ has a $k$-jet $z\in\Sigma^{i}(n,p)$ such that $w$
lies in $\mathcal{K}(i_{+1}(z))$.
\end{lemma}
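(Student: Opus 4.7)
The plan is to represent $w=j_0^k g$ by some germ $g\colon(\mathbb{R}^{n+1},0)\to(\mathbb{R}^{p+1},0)$ with $\dim\ker dg_0=i$ and, via a chain of $\mathcal{K}$-equivalences, bring it into the product form $(x,t)\mapsto(f(x),t)$. The factor $f$ will then automatically lie in $\Sigma^{i}(n,p)$ and provide the required $z=j_0^k f$.

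Since $i<n+1$, the rank $n+1-i$ of $dg_0$ is at least one, so one can choose a source direction $v\notin\ker dg_0$. A linear change of source coordinates aligns $v$ with $\partial/\partial t$, where I write $\mathbb{R}^{n+1}=\mathbb{R}^{n}\times\mathbb{R}_{t}$, and a linear change of target coordinates aligns $dg_0(v)$ with $\partial/\partial s$, where $\mathbb{R}^{p+1}=\mathbb{R}^{p}\times\mathbb{R}_{s}$. Both transformations sit inside $\mathcal{K}$. After these preparations $\partial g_{p+1}/\partial t(0)\neq 0$, so the implicit function theorem supplies a further source change whose effect is to make $g_{p+1}(x,t)=t$ identically. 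From here on I write $g(x,t)=(G(x,t),t)$ with $G\colon(\mathbb{R}^{n+1},0)\to(\mathbb{R}^{p},0)$.

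The decisive step is to remove the $t$-dependence of $G$ by a contact-group move. Writing $G(x,t)=G(x,0)+tH(x,t)$ for a smooth $H\colon(\mathbb{R}^{n+1},0)\to\mathbb{R}^{p}$, the diffeomorphism germ
\[
\Phi\colon (x,t,u,s)\longmapsto\bigl(x,t,\,u-sH(x,t),\,s\bigr)
\]
of $(\mathbb{R}^{n+1}\times\mathbb{R}^{p+1},0)$ is invertible, fixes the zero section, and projects to the identity on the source factor, so it belongs to $\mathcal{K}$. Acting on the graph of $g$ it produces the graph of $g'(x,t)=(G(x,0),t)$. Setting $f(x):=G(x,0)$ gives a $\mathcal{K}$-equivalence $g\sim f\times id_{\mathbb{R}}=i_{+1}(f)$, so $w$ lies in the $\mathcal{K}$-orbit of $i_{+1}(z)$ with $z=j_0^{k}f$.

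Finally, the block shape of $dg_0$ forced by $g(x,t)=(G(x,t),t)$ reads
\[
dg_0=\begin{pmatrix}df_0 & \partial G/\partial t(0)\\ 0 & 1\end{pmatrix},
\]
which gives $\ker dg_0=\ker df_0\times\{0\}$ and hence $\dim\ker df_0=i$, so $z\in\Sigma^{i}(n,p)$. The linear-algebra and implicit-function normalizations pose no difficulty; the only genuine point is to verify that the source-dependent fiberwise move $\Phi$ sits inside the contact group $\mathcal{K}$ of \cite{MathIII}, which reduces to the fact that $\mathcal{K}$ contains all fiberwise diffeomorphisms of the product space that preserve the zero section and cover the identity on the source.
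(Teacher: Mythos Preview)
Your proof is correct and follows the same overall strategy as the paper: normalize $g$ so that one target coordinate equals the extra source variable, then set that variable to zero to obtain $f$. The one substantive difference lies in how the $\mathcal{K}$-equivalence between $g$ and $f\times id_{\mathbb{R}}$ is established. The paper computes the local algebras $Q(j_0^k\bar g)$ and $Q(j_0^k f)$, observes they are isomorphic (since $x_{n+1}$ lies in the ideal $\bar g^{*}(\mathfrak{m}_{p+1})$), and then invokes Mather's theorem from \cite{MathIV} that $\mathcal{K}$-orbits are classified by these algebras. You instead exhibit the contact-group element $\Phi$ explicitly, which is more elementary in that it avoids the appeal to \cite{MathIV}; the paper's route, on the other hand, foreshadows the repeated use of the $Q$-invariant in the proof of Lemma~5.2 that immediately follows.
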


\begin{proof}
We consider the usual coordinates $x=(x_{1},x_{2},\cdots,x_{n+1})$ of
$\mathbb{R}^{n+1}$ and $y=(y_{1},y_{2},\cdots,y_{p+1})$ of $\mathbb{R}^{p+1}$.
Under suitable respective coordinates%
\[
x^{\prime}=(x_{1}^{\prime},x_{2}^{\prime},\cdots,x_{n+1}^{\prime})\text{ \ and
\ }y^{\prime}=(y_{1}^{\prime},y_{2}^{\prime},\cdots,y_{p+1}^{\prime})
\]
of $\mathbb{R}^{n+1}$ and $\mathbb{R}^{p+1}$, $w$ is represented as
$w=j_{0}^{k}g$ with%
\[
(y_{1}^{\prime}\circ g(x^{\prime}),\cdots,y_{p+1}^{\prime}\circ g(x^{\prime
}))=(x_{1}^{\prime},\cdots,x_{n-i}^{\prime},g^{n-i+1}(x^{\prime}),\cdots
,g^{p}(x^{\prime}),x_{n+1}^{\prime}),
\]
where $g^{j}\in\mathfrak{m}_{n+1}^{2}$ . Let $\overline{g}:\mathbb{R}%
^{n+1}\rightarrow\mathbb{R}^{p+1}$\ be the map germ defined by%
\[
(y_{1}\circ\overline{g}(x),\cdots,y_{p+1}\circ\overline{g}(x))=(x_{1}%
,\cdots,x_{n-i},g^{n-i+1}(x),\cdots,g^{p}(x),x_{n+1}).
\]
It is evident that $j_{0}^{k}\overline{g}\in\mathcal{K}(w)$. Let
$\overset{\circ}{x}=(x_{1},\cdots,x_{n},0)$. Define the map germ
$f:\mathbb{R}^{n}=\mathbb{R}^{n}\times0\rightarrow\mathbb{R}^{p}$\ by%
\[
f(\overset{\circ}{x})=\left\{
\begin{array}
[c]{ll}%
(x_{1},\cdots,x_{n-i},g^{n-i+1}(\overset{\circ}{x}),g^{n-i+2}(\overset{\circ
}{x}),\cdots,g^{p}(\overset{\circ}{x})) & \text{for }i<n,\\
(g^{1}(\overset{\circ}{x}),g^{2}(\overset{\circ}{x}),\cdots,g^{p}%
(\overset{\circ}{x})) & \text{for }i=n.
\end{array}
\right.
\]

Then we have%
\begin{align*}
Q(j_{0}^{k}\overline{g})  &  =\mathcal{C}_{n+1}/(\overline{g}^{\ast
}(\mathfrak{m}_{p+1})+\mathfrak{m}_{n+1}^{k+1})\\
&  \approx\mathcal{C}_{n}/(f^{\ast}(\mathfrak{m}_{p})+\mathfrak{m}_{n}%
^{k+1})\\
&  =Q(j_{0}^{k}f).
\end{align*}
Setting $z=j_{0}^{k}f$, we have $w\in\mathcal{K}(i_{+1}(z))$.
\end{proof}

\begin{lemma}
Let $\Omega(n,p)$ be a $\mathcal{K}$-invariant open subset of $J^{k}(n,p)$.
Then the set $\Omega_{\mathcal{K}}^{m}$ is open in $J^{k}(n+m,p+m)$ for $m>0$.
\end{lemma}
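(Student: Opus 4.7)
The plan is to reduce, via the $\mathcal{K}$-invariance of $\Omega_{\mathcal{K}}^{m}$, to producing for every $z_{0}\in\Omega(n,p)$ an open neighborhood of $i_{+m}(z_{0})$ inside $\Omega_{\mathcal{K}}^{m}$, and then to realize any $k$-jet near $i_{+m}(z_{0})$ as $\mathcal{K}$-equivalent to the suspension of a $k$-jet in $J^{k}(n,p)$ close to $z_{0}$ by an explicit normal-form construction.

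First, I would exploit that $\mathcal{K}$ is a Lie group acting smoothly on the finite-dimensional jet space $J^{k}(n+m,p+m)$, so each element acts by a diffeomorphism. Given $w_{0}=\kappa\cdot i_{+m}(z_{0})\in\Omega_{\mathcal{K}}^{m}$, applying $\kappa^{-1}$ translates the openness problem at $w_{0}$ into the openness problem at $i_{+m}(z_{0})$, so I may assume $w_{0}=i_{+m}(z_{0})$.

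Next comes the normal-form step. Choose a representative $f_{0}$ of $z_{0}$ so that $F_{0}(\xi,\eta)=(f_{0}(\xi),\eta)$ represents $i_{+m}(z_{0})$ on $\mathbb{R}^{n}\times\mathbb{R}^{m}$. Take $w$ nearby, with representative $F=(F^{1},F^{2})$ satisfying $F^{2}(\xi,\eta)\approx\eta$ and $F^{1}(\xi,0)\approx f_{0}(\xi)$ at the $k$-jet level. Define the source diffeomorphism $\phi(\xi,\eta)=(\xi,F^{2}(\xi,\eta))$, which is invertible at the origin since $\partial_{\eta}F^{2}\approx I_{m}$; it gives $F\circ\phi^{-1}(\xi,\eta)=(G(\xi,\eta),\eta)$ for a smooth germ $G$. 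I would then introduce the smooth $p\times m$ matrix
$$
S(\xi,\eta)=-\int_{0}^{1}\frac{\partial G}{\partial\eta}(\xi,t\eta)\,dt,
$$
so that $S(\xi,\eta)\cdot\eta=G(\xi,0)-G(\xi,\eta)$, and multiply by the invertible matrix
$$
U(\xi,\eta)=\begin{pmatrix}I_{p} & S(\xi,\eta)\\ 0 & I_{m}\end{pmatrix}.
$$
Then $U\cdot(F\circ\phi^{-1})(\xi,\eta)=(g(\xi),\eta)$ with $g(\xi):=G(\xi,0)$, which is the suspension representative of $i_{+m}(j_{0}^{k}g)$. The pair $(\phi,U)$ defines an element of Mather's contact group $\mathcal{K}$, so $w=j_{0}^{k}F\in\mathcal{K}(i_{+m}(j_{0}^{k}g))$.

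Finally, the entire construction is smooth in $F$ and returns $g=f_{0}$ when $F=F_{0}$, so $z:=j_{0}^{k}g\in J^{k}(n,p)$ is close to $z_{0}$ in the $k$-jet topology as soon as $w$ is close to $i_{+m}(z_{0})$. Since $\Omega(n,p)$ is open and contains $z_{0}$, $z$ lies in $\Omega(n,p)$ for $w$ in a small enough neighborhood, giving $w\in\mathcal{K}(i_{+m}(z))\subset\Omega_{\mathcal{K}}^{m}$, as desired. The main technical obstacle is verifying that $(\phi,U)$ genuinely represents an element of $\mathcal{K}$ in the sense of \cite{MathIII}---i.e., a source diffeomorphism combined with a source-dependent invertible linear change of the target fiber---and that the reduction passes cleanly to $k$-jets; both follow from the smoothness and explicit form of $\phi$ and $U$.
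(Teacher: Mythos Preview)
Your proof is correct and takes a genuinely different route from the paper's. The paper argues by contradiction with a convergent sequence $w_j\to w=i_{+1}(z)$ and appeals to Mather's local-algebra criterion: it manufactures jets $j_0^k f_j\in J^k(n,p)$ converging to $z$ with $Q_k(w_j)\cong Q_k(j_0^k f_j)$, and then invokes \cite{MathIV} to conclude $w_j\in\mathcal{K}(i_{+1}(j_0^k f_j))\subset\Omega_{\mathcal{K}}^{1}$. Your argument is instead constructive: for $w$ near $i_{+m}(z_0)$ you explicitly build the $\mathcal{K}$-element $(\phi,U)$ in Martinet's $(h,M)$ form (as in \cite{Mar1}) that carries $w$ to a suspension $i_{+m}(z)$, with $z$ depending continuously on $w$. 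Your approach is more elementary in that it avoids the nontrivial implication ``$Q_k$-isomorphism $\Rightarrow$ $\mathcal{K}$-equivalence'' from \cite{MathIV}, and it treats all $m>0$ at once rather than reducing to $m=1$; the paper's approach, on the other hand, meshes naturally with the $Q_k$-based arguments it reuses later (e.g.\ in Proposition~6.2). One small point worth making explicit in your write-up: the assignment $w\mapsto j_0^k g$ is well defined at the level of $k$-jets because the $k$-jets of $\phi$, $\phi^{-1}$, and $U$ are determined by $j_0^k F$, so the construction descends from germs to $J^k$.
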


\begin{proof}
It is enough to prove the case $m=1$. Suppose to the contrary that
$\Omega_{\mathcal{K}}^{1}$ is not open. Then there exists a $k$-jet
$w\in\Omega_{\mathcal{K}}^{1}$ and $k$-jets $w_{j}\notin\Omega_{\mathcal{K}%
}^{1}$ such that $\lim_{j\rightarrow\infty}w_{j}=w$. By definition, there
exists a $k$-jet $z\in\Omega(n,p)$\ such that $w\in\mathcal{K}(i_{+1}(z))$.
Let $i=n-$rank$z$. Then we may assume without loss of generality that
$w=i_{+1}(z)$, $w=j_{0}^{k}g$, $w_{j}=j_{0}^{k}g_{j}$ and $\lim_{j\rightarrow
\infty}w_{j}=w$\ with%
\begin{align*}
(y_{1}\circ g(x),\cdots,y_{p+1}\circ g(x))  &  =(x_{1},\cdots,x_{n-i}%
,g^{n-i+1}(x),\cdots,g^{p}(x),x_{n+1}),\\
(y_{1}\circ g_{j}(x),\cdots,y_{p+1}\circ g_{j}(x))  &  =(g_{j}^{1}%
(x),\cdots,g_{j}^{n-i}(x),g_{j}^{n-i+1}(x),\cdots,g_{j}^{p+1}(x)),
\end{align*}
where $g^{t}\in\mathfrak{m}_{n}^{2}$ for $n-i+1\leqq t\leqq p$. Since
$\lim_{j\rightarrow\infty}w_{j}=i_{+1}(z)$, we set%
\begin{align*}
h_{j}^{1}(x_{1},\cdots,x_{n+1})  &  =(x_{1},\cdots,x_{n-i},g_{j}%
^{n-i+1}(x),\cdots,g_{j}^{p}(x),x_{n+1}),\\
h_{j}^{2}(x_{1},\cdots,x_{n+1})  &  =(x_{1},\cdots,x_{n-i},g_{j}%
^{n-i+1}(\overset{\circ}{x}),\cdots,g_{j}^{p}(\overset{\circ}{x}),x_{n+1}).
\end{align*}
Since the map germ defined by%
\[
(x_{1},\cdots,x_{n+1})\longmapsto(g_{j}^{1}(x),\cdots,g_{j}^{n-i}%
(x),x_{n-i+1},\cdots,x_{n},g_{j}^{p+1}(x))
\]
is a local diffeomorphism for sufficiently large numbers $j$, we have
$Q_{k}(w_{j})\approx Q_{k}(j_{0}^{k}h_{j}^{1})\approx Q_{k}(j_{0}^{k}h_{j}%
^{2})$.\ Let us define%
\[
f_{j}(x_{1},\cdots,x_{n})=\left\{
\begin{array}
[c]{ll}%
(x_{1},\cdots,x_{n-i},g_{j}^{n-i+1}(\overset{\circ}{x}),\cdots,g_{j}%
^{p}(\overset{\circ}{x})) & \text{for }i<n,\\
(g_{j}^{1}(\overset{\circ}{x}),\cdots,g_{j}^{p}(\overset{\circ}{x})) &
\text{for }i=n.
\end{array}
\right.
\]
Then we have that $\lim_{j\rightarrow\infty}j_{0}^{k}h_{j}^{1}=w=i_{+1}(z)$
and $i_{+1}(j_{0}^{k}f_{j})=j_{0}^{k}h_{j}^{2}$, and hence, $Q_{k}%
(w_{j})\approx Q_{k}(j_{0}^{k}f_{j})$. Since $\lim_{j\rightarrow\infty}%
j_{0}^{k}f_{j}=z$, we have that $j_{0}^{k}f_{j}\in\Omega(n,p)$\ for
sufficiently large numbers $j$. By definition, we have $j_{0}^{k}h_{j}^{2}%
\in\Omega_{\mathcal{K}}^{1}$ for sufficiently large numbers $j$. Since
$Q_{k}(w_{j})\approx Q_{k}(j_{0}^{k}h_{j}^{2})$, it follows from \cite[Theorem
2.1]{MathIV} that $w_{j}\in\Omega_{\mathcal{K}}^{1}$ for sufficiently large
numbers $j$.\ This is a contradiction.
\end{proof}

\begin{lemma}
If two map germs $f_{1}$, $f_{2}:(\mathbb{R}^{m},0)\rightarrow(\mathbb{R}%
^{q},0)$ are $\mathcal{K}$-equivalent, then the Boardman symbols of $j_{0}%
^{k}f_{1}$ and $j_{0}^{k}f_{2}$ are the same. Consequently, the Boardman
manifold $\Sigma^{I}(m,q)$ is invariant with respect to the action of
$\mathcal{K}$.
\end{lemma}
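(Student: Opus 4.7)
The plan is to exploit the intrinsic description of the Boardman symbol in terms of the ideal $I(f)=f^{\ast}(\mathfrak{m}_{q})\mathcal{C}_{m}$ and its successive critical Jacobian extensions, rather than in terms of a particular representative $f$. Using Mather's decomposition $\mathcal{K}=\mathcal{R}\cdot\mathcal{C}$ of the contact group as a semi-direct product of the source-diffeomorphism subgroup $\mathcal{R}$ and the ``contact'' subgroup $\mathcal{C}$ (the latter consisting of germs $H:(\mathbb{R}^{m}\times\mathbb{R}^{q},0)\to(\mathbb{R}^{q},0)$ with $H(x,0)=0$ and $H(x,\cdot)$ a diffeomorphism germ at $0$, acting by $f\mapsto H(\cdot,f(\cdot))$), it suffices to verify invariance of the Boardman symbol under each factor separately.

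The $\mathcal{R}$-invariance is automatic from the definition of $\Sigma^{I}(m,q)\subset J^{k}(m,q)$ as an $L^{k}(m)$-invariant subset: source reparametrizations carry Boardman strata to themselves. For the $\mathcal{C}$-invariance, first observe that if $f_{2}(x)=H(x,f_{1}(x))$ then, writing $H(x,y)=M(x,y)\cdot y$ with $M(0,0)$ invertible (which is possible because $H(x,0)=0$ and $H(x,\cdot)$ is a fiberwise diffeomorphism at $0$), each component of $f_{2}$ lies in the ideal $I(f_{1})$; inverting $H$ yields the reverse inclusion, so $I(f_{1})=I(f_{2})$ as ideals in $\mathcal{C}_{m}$. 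I will then invoke Boardman's fundamental lemma that the critical Jacobian extension $\Delta_{i}(I)$ depends only on the ideal $I$ and not on any chosen set of generators; since the whole tower of ideals from which the Boardman symbol is extracted is determined by $I$ alone, the symbols of $j_{0}^{k}f_{1}$ and $j_{0}^{k}f_{2}$ agree.

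Combining the two factor-invariances yields the first claim. The consequence for $\Sigma^{I}(m,q)$ is then immediate: for $w\in\Sigma^{I}(m,q)$ and $w'\in\mathcal{K}(w)$ represented by a $\mathcal{K}$-equivalent germ, the first claim forces $w'\in\Sigma^{I}(m,q)$ as well, so the Boardman stratum is $\mathcal{K}$-invariant.

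The main obstacle, if there is one, is the precise invocation of Boardman's generator-independence of $\Delta_{i}(I)$; modulo that input (which can be re-derived from the observation that a change of generators of $I$ corresponds to left multiplication by an invertible matrix whose effect on the $i\times i$ minors of the Jacobian differs from the original minors by elements of $I\cdot\mathcal{C}_{m}$), the argument is purely formal bookkeeping.
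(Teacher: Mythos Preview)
Your argument is correct and is essentially the same as the paper's: both reduce $\mathcal{K}$-equivalence to the statement that the ideal $I(f)=f^{\ast}(\mathfrak{m}_{q})\mathcal{C}_{m}$ is preserved (up to a source reparametrization), and then invoke the intrinsic description of the Boardman symbol in terms of this ideal and its Jacobian extensions. The only cosmetic difference is that the paper uses Martinet's form $M(x)f_{1}(h(x))=f_{2}(x)$ directly and shows $h_{\ast}(\mathfrak{I}(f_{1}))=\mathfrak{I}(f_{2})$ in one step (citing Mather's \emph{On Thom--Boardman singularities} for the ideal-determines-symbol fact), whereas you separate out the $\mathcal{R}$ and $\mathcal{C}$ factors; the content is the same.
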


\begin{proof}
By \cite{Mar1} there exist a germ of a diffeomorphism $h:(\mathbb{R}%
^{m},0)\rightarrow(\mathbb{R}^{m},0)$ and a smooth map germ $M:(\mathbb{R}%
^{m},0)\rightarrow GL(q)$ such that $M(x)f_{1}(h(x))=f_{2}(x)$. Let
$f_{i}(x)=(f_{1}^{i}(x),\cdots,f_{q}^{i}(x))$ with $f_{j}^{i}\in
\mathcal{C}_{m}/\mathfrak{m}_{m}^{k+1}$ ($i=1,2$). Let $\mathfrak{I}(f_{i})$
denote the ideal of $\mathcal{C}_{m}/\mathfrak{m}_{m}^{k+1}$ generated by
$f_{1}^{i},\cdots,f_{q}^{i}$. Let $h_{\ast}:\mathcal{C}_{m}/\mathfrak{m}%
_{m}^{k+1}\rightarrow\mathcal{C}_{m}/\mathfrak{m}_{m}^{k+1}$ be the
isomorphism defined by $h_{\ast}(\phi)=\phi\circ h$. Then we have $h_{\ast
}(\mathfrak{I}(f_{1}))=\mathfrak{I}(f_{2})$. The Boardman symbols of
$j_{0}^{k}f_{i}$ ($i=1,2$) are determined by $\mathfrak{I}(f_{i})$ and are the
same by \cite{MathTB}. This proves the assertion.
\end{proof}

\begin{lemma}
If $I$ is a Boardman symbol such that $\Omega^{I}(n,p)$ is nonempty, then
$\Omega^{I}(n+1,p+1)$ is the union of all $\mathcal{K}$-orbits $\mathcal{K}%
(i_{+1}(z))$ for $z\in\Omega^{I}(n,p)$.
\end{lemma}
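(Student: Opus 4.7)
The plan is to prove both inclusions $\Omega^{I}_{\mathcal{K}} \subset \Omega^{I}(n+1,p+1)$ and $\Omega^{I}(n+1,p+1) \subset \Omega^{I}_{\mathcal{K}}$ separately, using Lemma 5.1 and Lemma 5.3 together with one preliminary observation that will do most of the work: the stabilization map $i_{+1}$ preserves the full Boardman symbol. That is, I will first verify that if $z \in \Sigma^{K}(n,p)$, then $i_{+1}(z) \in \Sigma^{K}(n+1,p+1)$. This reduces to the inductive definition of the Boardman strata: $d(f \times id_{\mathbb{R}})_{0} = df_{0} \oplus id_{\mathbb{R}}$ has the same corank as $df_{0}$, and locally $\Sigma^{k_{1}}(f \times id_{\mathbb{R}}) = \Sigma^{k_{1}}(f) \times \mathbb{R}$ with $(f \times id_{\mathbb{R}})|_{\Sigma^{k_{1}}(f \times id_{\mathbb{R}})} = (f|_{\Sigma^{k_{1}}(f)}) \times id_{\mathbb{R}}$, so an induction on the length of $K$ closes the argument.

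For the forward inclusion, take $w \in \Omega^{I}_{\mathcal{K}}$. By definition there are a $k$-jet $z \in \Omega^{I}(n,p)$ and an index $K \leqq I$ with $z \in \Sigma^{K}(n,p)$ and $w \in \mathcal{K}(i_{+1}(z))$. The preliminary observation gives $i_{+1}(z) \in \Sigma^{K}(n+1,p+1)$, and Lemma 5.3 then forces $w$ to have the same Boardman symbol $K$, so $w \in \Sigma^{K}(n+1,p+1) \subset \Omega^{I}(n+1,p+1)$.

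For the reverse inclusion, take $w \in \Omega^{I}(n+1,p+1)$ and let $K = (k_{1}, k_{2}, \ldots)$ be its Boardman symbol, so $K \leqq I$. Nonemptiness of $\Omega^{I}(n,p)$ forces $i_{1} \leqq n$ for the symbol $I = (i_{1}, i_{2}, \ldots)$; hence $k_{1} \leqq i_{1} \leqq n < n+1$ and Lemma 5.1 applies to $w$, producing $z \in \Sigma^{k_{1}}(n,p)$ with $w \in \mathcal{K}(i_{+1}(z))$. Combining Lemma 5.3 with the preliminary observation in reverse, the Boardman symbol of $z$ equals that of $i_{+1}(z)$, which equals that of $w$, namely $K$. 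Therefore $z \in \Sigma^{K}(n,p) \subset \Omega^{I}(n,p)$ and $w \in \mathcal{K}(i_{+1}(z)) \subset \Omega^{I}_{\mathcal{K}}$, finishing the proof.

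The main obstacle I expect is the preliminary observation that $i_{+1}$ preserves the entire Boardman symbol rather than just its first entry; while it is essentially folklore, writing it out cleanly requires unwinding the inductive construction of $\Sigma^{I}$ via intrinsic derivatives (or, equivalently, via successive restrictions of jet extensions to Boardman strata) and checking at each inductive step that the extra identity factor respects the kernel-of-corank data. Everything else is a matter of assembling Lemmas 5.1 and 5.3 and comparing Boardman symbols.
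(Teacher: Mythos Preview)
Your proof is correct and follows essentially the same approach as the paper's own proof of Lemma 5.4: both arguments use Lemma 5.1 for the inclusion $\Omega^{I}(n+1,p+1)\subset\Omega^{I}_{\mathcal{K}}$ (after observing that nonemptiness forces $i_{1}\leqq n$), and both rely on the fact that $i_{+1}$ preserves the Boardman symbol together with the $\mathcal{K}$-invariance of Boardman strata (Lemma 5.3) for the other inclusion. The only difference is that the paper declares the preliminary observation ``obvious'' and leaves the use of Lemma 5.3 implicit in both directions, whereas you spell these steps out; your version is the more carefully written of the two.
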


\begin{proof}
Let $I=(i_{1},i_{2},\cdots)$. Since $\Omega^{I}(n,p)$ is nonempty, we have
$i_{1}\leqq n$. Let $w\in\Omega^{I}(n+1,p+1)$, whose Boardman symbol is
$J\leqq I$. Since $j_{1}\leqq i_{1}\leqq n$, it follows from Lemma 5.1 that
there exists a $z\in\Omega^{I}(n,p)$\ such that $w\in\mathcal{K(}i_{+1}(z))$.

Conversely, let\ $z\in\Omega^{I}(n,p)$ with Boardman symbol $K\leqq I$. Since
the Boardman symbol of $i_{+1}(z)$ is obviously equal to $K$, we
have\ $i_{+1}(z)\in\Omega^{I}(n+1,p+1)$. This shows the assertion.
\end{proof}

\section{Preliminaries for Theorem 1.2}

We prepare lemmas and propositions for the proof of Theorem 1.2.

Let $\mathbf{e}_{i}=(0,\cdots,0,1,0,\cdots,0)$ with $1$ being the $i$-th
component. Let $pr_{p+1}:\mathbb{R}^{p+1}\rightarrow\mathbb{R}$ be the
projection mapping $(x_{1},\cdots,x_{p+1})$ to $x_{p+1}$. Let $\pi_{1}%
^{k}:J^{k}(n,p)\rightarrow J^{1}(n,p)$\ be the canonical forgetting projection.

Let $K$ be a finite simplicial complex and$\ L$ be its subcomplex such that
$K\backslash L$ is a manifold and $\dim L<\dim K$.

\begin{lemma}
Let $\Omega(n,p)$\ be a $\mathcal{K}$-invariant open subset of\ $J^{k}(n,p)$.
Let $(K,L)$ be given as above and $\dim K<p$. Let $\psi:(K,L)\rightarrow
(\Omega_{\mathcal{K}}^{1},i_{+1}(\Omega(n,p)))$ be a map such that
$\psi|(K\backslash L)$ is smooth. Then there exists a homotopy $\psi_{\lambda
}:(K,L)\rightarrow(\Omega_{\mathcal{K}}^{1},i_{+1}(\Omega(n,p)))$ such that

(i) $\psi_{0}=\psi,$

(ii) $\psi_{\lambda}|L=\psi|L,$

(iii) $pr_{p+1}(\pi_{1}^{k}\circ\psi_{1}(u)(x_{1},\cdots,x_{n+1}))=x_{n+1}$
for any $u\in K.$
\end{lemma}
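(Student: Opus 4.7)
The strategy is to construct $\psi_\lambda$ fiberwise by a continuously varying $\mathcal{K}$-action applied to $\psi(u)$; since $\Omega_{\mathcal{K}}^1$ is by definition $\mathcal{K}$-invariant, such a family stays in $\Omega_{\mathcal{K}}^1$ automatically. The target condition concerns only the linear part $B_u := \pi_1^k(\psi(u)) \in \mathrm{Hom}(\mathbb{R}^{n+1}, \mathbb{R}^{p+1})$, namely that its $(p+1)$-th row equals $(0, \ldots, 0, 1)$. On $L$, $\psi(u) \in i_{+1}(\Omega(n,p))$ is represented by a germ of the form $f \times id_\mathbb{R}$, so $B_u$ already has this property, and I take $\psi_\lambda|L = \psi|L$ constant in $\lambda$.

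For each individual $u$, by definition $\psi(u) \in \mathcal{K}(i_{+1}(z_u))$ for some $z_u \in \Omega(n,p)$. Since $\mathcal{K}$-equivalence preserves the Boardman symbol (Lemma 5.3), in particular the corank, one has $\mathrm{rank}(B_u) = \mathrm{rank}(B_{z_u}) + 1 \geq 1$. The $\mathcal{K}$-action descends on 1-jets to the rank-preserving $GL(p+1) \times GL(n+1)$-action $B \mapsto MBN^{-1}$, whose orbit of $B_u$ contains matrices with $(p+1)$-th row $(0, \ldots, 0, 1)$; choose one such and call the corresponding transformation $(M_u, N_u)$. The $\mathcal{K}$-action of the constant germ $x \mapsto M_u$ together with an affine diffeomorphism germ $h_u$ with $dh_u(0) = N_u$, interpolated from $(I, id)$ along a path inside the identity component of $\mathcal{K}$, produces the desired endpoint deformation of $\psi(u)$.

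The main obstacle is the continuous selection of $(M_u, N_u)$ in $u$, equal to $(I, I)$ over $L$. This is an obstruction-theoretic problem: the valid pairs assemble into a fiber bundle $\mathcal{S} \to K$, and extending the trivial section over $L$ to all of $K$ requires that obstruction classes in $H^*(K, L; \pi_*(\mathrm{fiber}))$ vanish. The dimension hypothesis $\dim K < p$ enters precisely here, through connectivity estimates on the fibers derived from the codimensions of the rank strata in the space of $(p+1) \times (n+1)$-matrices. I would carry out a skeletal induction over the pair $(K, L)$, extending from the boundary of each simplex to its interior using the fiber connectivity to annihilate the relevant obstruction classes; this continuous-selection step is the technical heart of the argument, whereas the underlying geometric mechanism—using $\mathcal{K}$ to reshape 1-jets within a single rank stratum—is itself straightforward.
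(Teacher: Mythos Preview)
Your strategy---deforming $\psi$ purely by a family of $\mathcal{K}$-actions and handling the continuous selection by obstruction theory---has a genuine gap at the point you flag as ``the technical heart.'' The fiber of your bundle $\mathcal{S}\to K$ over $u$ consists of pairs $(M,N)\in GL(p{+}1)\times GL(n{+}1)$ whose action sends $B_u$ to a matrix with last row $\mathbf{e}_{n+1}^T$. Writing $v=M^T\mathbf{e}_{p+1}$ for the last row of $M$, the constraint forces the last row of $N$ to be $(B_u^T v)^T$, so the fiber projects onto $\{v\in\mathbb{R}^{p+1}:B_u^T v\neq 0\}=\mathbb{R}^{p+1}\setminus\ker(B_u^T)$ with connected fibers. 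If $\mathrm{rank}(B_u)=r$, this base is homotopy equivalent to $S^{r-1}$. Since $\Omega(n,p)$ may contain jets of rank $0$ (it is merely a $\mathcal{K}$-invariant open set), $r$ can equal $1$, and then your fiber is disconnected: the obstruction to extending the section over the $1$-skeleton need not vanish, regardless of $\dim K<p$. Moreover, because $r=\mathrm{rank}(B_u)$ varies over $K$, $\mathcal{S}\to K$ is not even locally trivial, so the obstruction-theoretic framework you invoke does not apply directly. Your appeal to ``codimensions of the rank strata'' does not rescue this: the $\mathcal{K}$-action preserves rank, so it cannot move $\psi$ off a low-rank stratum.

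The paper's argument sidesteps exactly this problem by \emph{not} restricting the initial deformation to a $\mathcal{K}$-action. Its first move is a small perturbation of $\psi$ inside the open set $\Omega_{\mathcal{K}}^1$ (via transversality and the covering homotopy property of the submersion $j_0^k f\mapsto j_0^1 f(\mathbf{e}_{n+1})$) so that the image $B_u(\mathbf{e}_{n+1})\in\mathbb{R}^{p+1}$ avoids the half-line of non-positive multiples of $\mathbf{e}_{p+1}$; this half-line has codimension $p$, and \emph{this} is precisely where $\dim K<p$ enters. Once $B_u(\mathbf{e}_{n+1})$ is nonzero and not antipodal to $\mathbf{e}_{p+1}$, explicit and continuously-varying target rotations/scalings carry it to $\mathbf{e}_{p+1}$, and a parallel source rotation/scaling then makes $pr_{p+1}\circ\pi_1^k\psi_1(u)$ equal to the $(n{+}1)$-th coordinate. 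No obstruction theory is needed; the construction is elementary once the transversality step is in place. Your approach could likely be repaired by inserting an analogous preliminary perturbation (e.g., making $\pi_1^k\circ\psi$ transverse to the low-rank strata so that $r$ is large on low-dimensional skeleta), but as written the selection argument does not go through.
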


\begin{proof}
Let us define $\mathbf{e}:K\rightarrow\mathbb{R}^{p+1}$ by $\mathbf{e}%
(u)=(\pi_{1}^{k}\circ\psi(u))(\mathbf{e}_{n+1})$. Since $\psi(L)\subset
i_{+1}(\Omega(n,p))$, we have that, for any $u\in L$, $\mathbf{e}%
(u)=\mathbf{e}_{p+1}$. Consider the fiber bundle $\mathfrak{d}_{\mathbb{R}%
^{p+1}}:J^{k}(n+1,p+1)\rightarrow\mathbb{R}^{p+1}$ defined by $\mathfrak{d}%
_{\mathbb{R}^{p+1}}(j_{\mathbf{0}}^{k}f)=j_{\mathbf{0}}^{1}f(\mathbf{e}%
_{n+1})$. Since $\dim K<p$, $K\backslash L$ is a manifold, $\psi|(K\backslash
L)$ is smooth and since $\Omega_{\mathcal{K}}^{1}$ is an open subset, it
follows from the transversality theorem and the covering homotopy property of
$\mathfrak{d}_{\mathbb{R}^{p+1}}$ that there exists a homotopy $\varphi
_{\lambda}:K\rightarrow J^{k}(n+1,p+1)$ relative to $L$ with $\varphi_{0}%
=\psi$\ such that

(6.1-i) the deformation $\mathbf{u}_{\lambda}=\mathfrak{d}_{\mathbb{R}^{p+1}%
}\circ\varphi_{\lambda}$ of $\mathbf{e}$ with $\mathbf{u}_{0}=\mathbf{e}%
$\ satisfies that $\mathbf{u}_{1}$ does not take the value of any non-positive
multiple of $\mathbf{e}_{p+1}$ and

(6.1-ii) $\varphi_{\lambda}(K)\subset\Omega_{\mathcal{K}}^{1}$ for any
$\lambda$.

In the following an element of $GL(m)$ is regarded as a linear isomorphism of
$\mathbb{R}^{m}$ and $E_{m}$\ is the unit matrix of degree $m$. Let
$h_{\lambda}^{1}:(K,L)\rightarrow(GL(p+1),E_{p+1})$ be the homotopy defined by
$h_{\lambda}^{1}(u)=((1-\lambda)+\lambda/\Vert\mathbf{u}_{1}(u)\Vert)E_{p+1}$.
It follows from (6.1-i) that $h_{1}^{1}(u)(\mathbf{u}_{1}(u))\in S^{p}$ and
$h_{1}^{1}(u)(\mathbf{u}_{1}(u))\neq-\mathbf{e}_{p+1}$ for any $u\in K$. By
considering the rotation which is the identity on all points orthogonal to
both $\mathbf{u}_{1}(u)$ and $\mathbf{e}_{p+1}$ and rotates the great circle
through $\mathbf{u}_{1}(u)$ and $\mathbf{e}_{p+1}$ so as to carry
$\mathbf{u}_{1}(u)$ to $\mathbf{e}_{p+1}$ along the shorter way (when
$\mathbf{u}_{1}(u)=\mathbf{e}_{p+1}$, we consider $E_{p+1}$), we have the
homotopy $h_{\lambda}^{2}:(K,L)\rightarrow(SO(p+1),E_{p+1})$ relative to $L$
such that $h_{0}^{2}(u)=E_{p+1}$ and $h_{1}^{2}(u)\circ h_{1}^{1}%
(u)(\mathbf{u}_{1}(u))=\mathbf{e}_{p+1}$ for any $u\in K$. Let $h_{\lambda
}:(K,L)\rightarrow(GL(p+1),E_{p+1})$ be the homotopy defined by $h_{\lambda
}=h_{2\lambda}^{1}$ for $0\leqq\lambda\leqq1/2$ and $h_{\lambda}%
=h_{2\lambda-1}^{2}\circ h_{1}^{1}$ for $1/2\leqq\lambda\leqq1$. Define
$\kappa_{p+1}:K\rightarrow J^{1}(n+1,1)$ by
\[
\kappa_{p+1}(u)=pr_{p+1}\circ\pi_{1}^{k}(j^{k}(h_{1}(u))\circ\varphi_{1}(u)).
\]
Since $\kappa_{p+1}(u)$ is of rank $1$ for any $u\in K$, we have the unique
vector $V(u)\in\mathbb{R}^{n+1}$ of length $1$ such that $V(u)$ is
perpendicular to $\mathrm{Ker}(\kappa_{p+1}(u))$ and $\kappa_{p+1}(u)(V(u))$
is positive. Namely, $h_{1}(u)\circ\pi_{1}^{k}(\varphi_{1}(u))(V(u))$ is
directed to the same orientation of $\mathbf{e}_{p+1}$. Since $\kappa
_{p+1}(u)(\mathbf{e}_{n+1})=1$, $V(u)$ cannot be equal to $-\mathbf{e}_{n+1}$.

We set $v(u)=\kappa_{p+1}(u)(V(u))>0$. By considering the rotation which is
the identity on all points orthogonal to both $V(u)$ and $\mathbf{e}_{n+1}$
and rotates the great circle through $V(u)$ and $\mathbf{e}_{n+1}$ so as to
carry $\mathbf{e}_{n+1}$ to $V(u)$ along the shorter way, we again have the
homotopy $H_{\lambda}^{1}:(K,L)\rightarrow(SO(n+1),E_{n+1})$ relative to $L$
such that $H_{0}^{1}(u)=E_{n+1}$ and $H_{1}^{1}(u)(\mathbf{e}_{n+1})=V(u)$ for
any $u\in K$. Let$\ H_{\lambda}^{2}:(K,L)\rightarrow(GL(n+1),E_{n+1})$ be the
homotopy relative to $L$ defined by $H_{\lambda}^{2}(u)=((1-\lambda
)+\lambda/v(u))E_{n+1}$. Let $H_{\lambda}:(K,L)\rightarrow(GL(n+1),E_{n+1})$
be the homotopy defined by $H_{\lambda}(u)=H_{2\lambda}^{1}(u)$ for
$0\leqq\lambda\leqq1/2$ and $H_{\lambda}(u)=H_{2\lambda-1}^{2}(u)\circ
H_{1}^{1}(u)$ for $1/2\leqq\lambda\leqq1$. Then we have that for any $u\in
K$,
\begin{align*}
\kappa_{p+1}(u)\circ H_{1}(u)(\mathbf{e}_{n+1})  &  =\kappa_{p+1}(u)\circ
H_{1}^{2}(u)\circ H_{1}^{1}(u)(\mathbf{e}_{n+1})\\
&  =\kappa_{p+1}(u)\circ H_{1}^{2}(u)(V(u))\\
&  =\kappa_{p+1}(u)(V(u))/v(u)\\
&  =1.
\end{align*}
Since $H_{1}^{1}(u)\in SO(n+1)$ and $\mathbf{e}_{i}$ is orthogonal to
$\mathbf{e}_{n+1}$\ ($i<n+1$), $H_{1}^{1}(u)(\mathbf{e}_{i})$ is orthogonal to
$H_{1}^{1}(u)(\mathbf{e}_{n+1})=V(u)$. Namely, $H_{1}(u)(\mathbf{e}_{i})$ lies
in $\mathrm{Ker}(\kappa_{p+1}(u))$. Hence, we have%
\[
\kappa_{p+1}(u)\circ H_{1}(u)(\mathbf{e}_{i})=0\text{ \ \ \ for }i<n+1.
\]
Define the homotopy $\psi_{\lambda}:(K,L)\rightarrow(\Omega_{\mathcal{K}}%
^{1},i_{+1}(\Omega(n,p))$ relative to $L$ by
\[
\psi_{\lambda}(u)=\left\{
\begin{array}
[c]{ll}%
\varphi_{3\lambda}(u) & \text{for }0\leqq\lambda\leqq1/3,\\
h_{3\lambda-1}(u)\circ\varphi_{1}(u) & \text{for }1/3\leqq\lambda\leqq2/3,\\
h_{1}(u)\circ\varphi_{1}(u)\circ H_{3\lambda-2}(u) & \text{for\ }%
2/3\leqq\lambda\leqq1.
\end{array}
\right.
\]
By the definition we have
\[
pr_{p+1}\circ\pi_{1}^{k}(\psi_{1}(u))(\mathbf{e}_{i})=\left\{
\begin{array}
[c]{ll}%
0 & \text{for }i<n+1,\\
1 & \text{for }i=n+1.
\end{array}
\right.
\]

This is what we want.
\end{proof}

\begin{proposition}
Under the same assumption of Lemma 6.1, we have a homotopy $\Psi_{\lambda
}:(K,L)\rightarrow(\Omega_{\mathcal{K}}^{1},i_{+1}(\Omega(n,p)))$ such that

(i) $\Psi_{0}=\psi,$

(ii) $\Psi_{\lambda}|L=\psi|L,$

(iii) $\Psi_{1}(K)\subset i_{+1}(\Omega(n,p)).$
\end{proposition}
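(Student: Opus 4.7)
The plan is to invoke Lemma 6.1 first, reducing (via its homotopy $\psi_{\lambda}$) to the case where the linear part of the $(p+1)$-th component of $\psi(u)$ is exactly $x_{n+1}$, and then to build the remainder of $\Psi_{\lambda}$ in two further substages, each of which is a continuously varying family of $\mathcal{K}$-equivalences applied to a representative of $\psi(u)$. Substage one is a right (source) action that straightens the $(p+1)$-th component to be exactly $x_{n+1}$ at the full $k$-jet level; substage two is a left (target) action by a unipotent $GL(p+1)$-valued matrix that kills the $x_{n+1}$-dependence of the first $p$ components. Concatenating the three pieces yields $\Psi_{\lambda}$; since every deformation is through $\mathcal{K}$-equivalences and $\Omega_{\mathcal{K}}^{1}$ is $\mathcal{K}$-invariant by definition, each $\Psi_{\lambda}(u)$ lies in $\Omega_{\mathcal{K}}^{1}$.

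For substage one, after Lemma 6.1 we have $\psi(u)=j_{0}^{k}g_{u}$ with $g_{u}^{p+1}(x_{1},\ldots,x_{n+1})=x_{n+1}+s_{u}(x)$, $s_{u}\in\mathfrak{m}_{n+1}^{2}$. Put $h_{u}^{\mu}(x)=(x_{1},\ldots,x_{n},x_{n+1}+\mu\,s_{u}(x))$ for $\mu\in[0,1]$; since $s_{u}\in\mathfrak{m}_{n+1}^{2}$ the Jacobian of $h_{u}^{\mu}$ at the origin is $E_{n+1}$, so $h_{u}^{\mu}$ is a $k$-jet diffeomorphism germ depending continuously on $(u,\mu)$. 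The homotopy $\mu\mapsto j_{0}^{k}(g_{u}\circ(h_{u}^{\mu})^{-1})$ stays in $\Omega_{\mathcal{K}}^{1}$ because right translation is a $\mathcal{K}$-equivalence, it is stationary on $L$ because $s_{u}\equiv 0$ there (the jet $\psi(u)\in i_{+1}(\Omega(n,p))$ already has the form $(f,x_{n+1})$), and at $\mu=1$ the $(p+1)$-th component is exactly $x_{n+1}$.

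For substage two, relabel the representative obtained at the end of substage one as $g_{u}=(g_{u}^{1},\ldots,g_{u}^{p},x_{n+1})$. At the $k$-jet level split $g_{u}^{i}(x)=f_{u}^{i}(x_{1},\ldots,x_{n})+x_{n+1}\,r_{u}^{i}(x)$ for $i=1,\ldots,p$, with $f_{u}^{i}(x_{1},\ldots,x_{n})=g_{u}^{i}(x_{1},\ldots,x_{n},0)$ and $r_{u}^{i}$ the $(k-1)$-jet obtained by collecting the terms of $g_{u}^{i}$ divisible by $x_{n+1}$; both depend continuously on $u$. Define
\[
M_{u}^{\mu}(x)=\begin{pmatrix}E_{p} & -\mu\bigl(r_{u}^{1}(x),\ldots,r_{u}^{p}(x)\bigr)^{T}\\ 0 & 1\end{pmatrix}\in GL(p+1)
\]
and take the homotopy $\mu\mapsto j_{0}^{k}(M_{u}^{\mu}\cdot g_{u})$. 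Since $\det M_{u}^{\mu}=1$ this is a $\mathcal{K}$-equivalence deformation; a direct computation gives $M_{u}^{1}\cdot g_{u}=(f_{u}^{1},\ldots,f_{u}^{p},x_{n+1})=i_{+1}(j_{0}^{k}f_{u})$, and for $u\in L$ the $r_{u}^{i}$ all vanish and $M_{u}^{\mu}=E_{p+1}$, so the homotopy is again stationary on $L$.

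It remains to verify $\Psi_{1}(u)=i_{+1}(j_{0}^{k}f_{u})\in i_{+1}(\Omega(n,p))$, i.e.\ that $j_{0}^{k}f_{u}\in\Omega(n,p)$. By construction $i_{+1}(j_{0}^{k}f_{u})$ is $\mathcal{K}$-equivalent to $g_{u}\in\Omega_{\mathcal{K}}^{1}$, hence to some $i_{+1}(z_{u})$ with $z_{u}\in\Omega(n,p)$; comparing local algebras exactly as in the proof of Lemma 5.1 yields $Q_{k}(j_{0}^{k}f_{u})\approx Q_{k}(z_{u})$, so $j_{0}^{k}f_{u}\sim_{\mathcal{K}}z_{u}$ by \cite[Theorem 2.1]{MathIV}, and the $\mathcal{K}$-invariance of $\Omega(n,p)$ gives $j_{0}^{k}f_{u}\in\Omega(n,p)$. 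The main technical point, in my view, is organizing the $\mathcal{K}$-equivalence deformations so that they are well-defined purely at the $k$-jet level and depend continuously on $u$; once this is in place, $\mathcal{K}$-invariance of $\Omega_{\mathcal{K}}^{1}$ (for the trajectory of the homotopy) and of $\Omega(n,p)$ (for the endpoint) close the argument.
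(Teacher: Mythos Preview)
Your proof is correct and follows essentially the same three-stage strategy as the paper: first Lemma 6.1, then a source diffeomorphism to make the $(p+1)$-th component equal to $x_{n+1}$ exactly (your $h_u^\mu$ is the paper's $\Phi_\lambda$), and finally elimination of the $x_{n+1}$-dependence from the first $p$ components. Your matrix $M_u^\mu\cdot g_u$ in substage two produces literally the same homotopy as the paper's linear interpolation $\eta_\lambda(u)(x)=(1-\lambda)\phi_1(u)(x)+\lambda(\phi_1(u)(\overset{\circ}{x})+(0,\ldots,0,x_{n+1}))$; you justify membership in $\Omega_{\mathcal{K}}^1$ via the explicit $\mathcal{K}$-action, whereas the paper argues that the ideal generated by the components is constant in $\lambda$---these are equivalent. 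You are also more explicit than the paper about the final point, namely that $i_{+1}(j_0^k f_u)\in\Omega_{\mathcal{K}}^1$ forces $j_0^k f_u\in\Omega(n,p)$ via comparison of local algebras and Mather's theorem; the paper calls this ``obvious.''
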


\begin{proof}
Let $\psi_{\lambda}$ be the homotopy given in Lemma 6.1. Let us express
$\psi_{\lambda}(u)=(f_{\lambda}^{1}(u),f_{\lambda}^{2}(u),\cdots,f_{\lambda
}^{p+1}(u))$\ using the coordinates of $\mathbb{R}^{p+1}$, where $f_{\lambda
}^{i}(u)$\ is regarded as a polynomial of degree at most $k$ with constant
$0$. We note that%
\[
f_{1}^{p+1}(u)(x_{1},\cdots,x_{n+1})=x_{n+1}+\text{higher term}.
\]
Let $\mathrm{Diff}(\mathbb{R}^{n+1},0)$\ be the space of all germs of local
diffeomorphisms of $(\mathbb{R}^{n+1},0)$. Let us define a homotopy of maps
$\Phi_{\lambda}:(K,L)\rightarrow(\mathrm{Diff}(\mathbb{R}^{n+1}%
,0),id_{(\mathbb{R}^{n+1},0)})$ by%
\[
\Phi_{\lambda}(u)(x_{1},\cdots,x_{n+1})=(x_{1},\cdots,x_{n+1}+\lambda
(f_{1}^{p+1}(u)-x_{n+1})).
\]
It is obvious that $\Phi_{\lambda}(u)$\ is a germ of a diffeomorphism of
$(\mathbb{R}^{n+1},0)$. Then we have the inverse $\Phi_{\lambda}(u)^{-1}$ such
that%
\begin{equation}
pr_{p+1}\circ\psi_{1}(u)\circ\Phi_{1}(u)^{-1}(x_{1},\cdots,x_{n+1}%
)=x_{n+1}\text{.}%
\end{equation}
We now define $\phi_{\lambda}:(K,L)\rightarrow(\Omega_{\mathcal{K}}^{1}%
,i_{+1}(\Omega(n,p)))$ by%
\[
\phi_{\lambda}(u)=\psi_{1}\circ j_{0}^{k}(\Phi_{\lambda}(u)^{-1}).
\]

In order to exclude the terms containing $x_{n+1}$ from $y_{j}\circ\phi_{1}%
$\ ($1\leqq j\leqq p$) we define the homotopy $\eta_{\lambda}:(K,L)\rightarrow
(J^{k}(n+1,p+1),i_{+1}(J^{k}(n,p)))$\ by
\begin{align}
\eta_{\lambda}(u)(x)  &  =(1-\lambda)\phi_{1}(u)(x_{1},\cdots,x_{n+1}%
)\nonumber\\
&  +\lambda(\phi_{1}(u)(x_{1},\cdots,x_{n},0)+(0,\cdots,0,x_{n+1})).
\end{align}
It is obvious that $\eta_{1}(K)\subset i_{+1}(\Omega(n,p))$ and that
$\eta_{\lambda}|L=\psi|L.$ It remains to prove that $\eta_{\lambda}$ is a
homotopy to $\Omega_{\mathcal{K}}^{1}$. It follows from (6.1) and (6.2) that%
\[
pr_{p+1}\circ\eta_{\lambda}(u)(x)=(1-\lambda)(x_{n+1})+\lambda x_{n+1}%
=x_{n+1}.
\]
Let us express $\eta_{\lambda}(u)=(g_{\lambda}^{1}(u),g_{\lambda}%
^{2}(u),\cdots,g_{\lambda}^{p+1}(u))$, where $g_{\lambda}^{i}(u)$\ is regarded
as a polynomial of degree at most $k$ with constant $0$.\ Consider the ideal
$\mathfrak{I}_{\lambda}(u)$\ generated by $g_{\lambda}^{1}(u),g_{\lambda}%
^{2}(u),\cdots,g_{\lambda}^{p+1}(u)$\ in $\mathfrak{m}_{n+1}\mathfrak{/m}%
_{n+1}^{k+1}$. Then $\mathfrak{I}_{\lambda}(u)$\ is constantly equal to
$\mathfrak{I}_{0}(u)$, and hence $Q(\eta_{\lambda}(u))\approx Q(\psi_{1}(u))$.
Since $\psi_{1}(u)\in\Omega_{\mathcal{K}}^{1}$, we have $\eta_{\lambda}%
(u)\in\Omega_{\mathcal{K}}^{1}$ by \cite{MathIV}. Then the required homotopy
$\Psi_{\lambda}$ is defined by $\Psi_{\lambda}=\psi_{3\lambda}\,$%
($0\leqq\lambda\leqq1/3$), $\Psi_{\lambda}=\phi_{3\lambda-1}\ $($1/3\leqq
\lambda\leqq2/3$) and $\Psi_{\lambda}=\eta_{3\lambda-2}$ ($2/3\leqq
\lambda\leqq1$).
\end{proof}

\begin{proposition}
Let $\Omega(n,p)$ be a $\mathcal{K}$-invariant open subset of $J^{k}(n,p)$.
Then $i_{+1}:\Omega(n,p)\rightarrow\Omega_{\mathcal{K}}^{1}$ is a homotopy
$(p-1)$-equivalence.
\end{proposition}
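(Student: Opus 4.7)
The plan is to deduce Proposition 6.3 from Proposition 6.2 by the standard translation of a relative deformation lemma into a connectivity statement on homotopy groups. Fix a basepoint $z_0 \in \Omega(n,p)$ and use $i_{+1}(z_0)$ as the basepoint of $\Omega_{\mathcal{K}}^{1}$; since $i_{+1}$ is a smooth embedding, I identify $\Omega(n,p)$ with $i_{+1}(\Omega(n,p))$ as a subspace of $\Omega_{\mathcal{K}}^{1}$.

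For surjectivity of $(i_{+1})_{\ast}$ on $\pi_i$ with $1 \leq i \leq p-1$, I represent a class by a pointed map $\psi : (S^i, \mathrm{pt}) \to (\Omega_{\mathcal{K}}^{1}, i_{+1}(z_0))$, smooth on $S^i \setminus \{\mathrm{pt}\}$ after a standard approximation rel the basepoint (possible since $\Omega_{\mathcal{K}}^{1}$ is open in $J^{k}(n+1,p+1)$ by Lemma 5.2). Apply Proposition 6.2 with $K = S^i$ and $L = \{\mathrm{pt}\}$, noting that $\dim K = i < p$ and $K \setminus L \cong \mathbb{R}^i$ is a manifold. The resulting homotopy rel $L$ produces $\Psi_1 : S^i \to i_{+1}(\Omega(n,p))$, realizing the class in the image of $(i_{+1})_{\ast}$. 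The edge case $i = 0$ is direct: any $w \in \Omega_{\mathcal{K}}^{1}$ lies in $\mathcal{K}(i_{+1}(z'))$ for some $z' \in \Omega(n,p)$, and path-connectedness of $\mathcal{K}$ connects $w$ to $i_{+1}(z')$ inside $\Omega_{\mathcal{K}}^{1}$.

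For injectivity on $\pi_i$ with $0 \leq i \leq p-2$, take $\varphi : (S^i, \mathrm{pt}) \to (\Omega(n,p), z_0)$ with $(i_{+1})_{\ast}[\varphi] = 0$, so $i_{+1} \circ \varphi$ extends to $\widetilde{\psi} : D^{i+1} \to \Omega_{\mathcal{K}}^{1}$ with $\widetilde{\psi}|_{S^i} = i_{+1} \circ \varphi$. After smoothing $\widetilde{\psi}$ on the open disk rel $S^i$, apply Proposition 6.2 with $(K,L) = (D^{i+1}, S^i)$; the hypothesis $\dim K = i+1 \leq p-1 < p$ holds and $K \setminus L = \mathrm{int}(D^{i+1})$ is a manifold. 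The resulting homotopy, constant on $S^i$, terminates in $\Psi_1 : D^{i+1} \to i_{+1}(\Omega(n,p))$, which under the identification pulls back to a null-homotopy of $\varphi$ in $\Omega(n,p)$, whence $[\varphi] = 0$.

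Combining, $(i_{+1})_{\ast}$ is an isomorphism on $\pi_i$ for $0 \leq i < p-1$ and a surjection on $\pi_{p-1}$, which is exactly the definition of a homotopy $(p-1)$-equivalence given in \S 2. The substantive geometric content is packaged entirely inside Proposition 6.2; the only technical point is arranging a smooth approximation rel $L$ that remains inside the open set $\Omega_{\mathcal{K}}^{1}$, which is routine given Lemma 5.2. I do not expect a serious obstacle beyond this bookkeeping, and indeed the sharp dimensional cutoffs $i \leq p-1$ (for surjectivity) and $i \leq p-2$ (for injectivity) arise precisely from the hypothesis $\dim K < p$ in Proposition 6.2.
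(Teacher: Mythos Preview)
Your proposal is correct and follows essentially the same approach as the paper: both deduce the connectivity statement directly from Proposition~6.2, applying it to $(S^i,\{\mathrm{pt}\})$ for surjectivity and to a filled-in null-homotopy for injectivity. The only cosmetic difference is that for injectivity the paper takes $(K,L)=(S^i\times[0,1],\,S^i\times\{0,1\}\cup\{\mathrm{pt}\}\times[0,1])$ rather than your $(D^{i+1},S^i)$, and it absorbs the $i=0$ surjectivity into the general argument instead of treating it separately; neither change affects the substance.
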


\begin{proof}
Let $\iota_{n}$ denote a jet of $\Omega(n,p)$ and $\iota_{n+1}=i_{+1}%
(\iota_{n})$. We first prove that $(i_{+1})_{\ast}:\pi_{i}(\Omega
(n,p))\rightarrow\pi_{i}(\Omega_{\mathcal{K}}^{1})$ is surjective for $0\leqq
i\leqq p-1$. Indeed, let $[a]\in\pi_{i}(\Omega_{\mathcal{K}}^{1})$\ be
represented by $a:(S^{i},\mathbf{e}_{1})\rightarrow(\Omega_{\mathcal{K}}%
^{1},\iota_{n+1})$. Then by Proposition 6.2 we have a homotopy $\varphi
_{\lambda}:(S^{i},\mathbf{e}_{1})\rightarrow(\Omega_{\mathcal{K}}^{1}%
,\iota_{n+1})$\ such that $\varphi_{1}(S^{i})\subset i_{+1}(\Omega(n,p))$.

Next let $0\leqq i<p-1$. Let $[b]\in\pi_{i}(\Omega(n,p))$\ be represented by
$b:(S^{i},\mathbf{e}_{1})\rightarrow(\Omega(n,p),\iota_{n})$ such that
$(i_{+1})_{\ast}([b])=0$. Then we have a homotopy $\widetilde{\varphi}%
:S^{i}\times\lbrack0,1]\rightarrow(\Omega_{\mathcal{K}}^{1},\iota_{n+1}%
)$\ such that $\widetilde{\varphi}|S^{i}\times0=i_{+1}\circ b$\ under the
identification $S^{i}=S^{i}\times0$ and $\widetilde{\varphi}(\mathbf{e}%
_{1}\times\lbrack0,1]\cup S^{i}\times1)=\iota_{n+1}$. It follows from
Proposition 6.2\ that since $i+1<p$, there exists a homotopy $\Phi_{\lambda
}:(S^{i}\times\lbrack0,1],S^{i}\times0\cup\mathbf{e}_{1}\times\lbrack0,1]\cup
S^{i}\times1)\rightarrow(\Omega_{\mathcal{K}}^{1},i_{+1}(\Omega(n,p)))$%
\ relative to $S^{i}\times0\cup\mathbf{e}_{1}\times\lbrack0,1]\cup S^{i}%
\times1$\ such that $\Phi_{1}(S^{i}\times\lbrack0,1])\subset i_{+1}%
(\Omega(n,p))$. This proves the injectivity of $(i_{+1})_{\ast}:\pi_{i}%
(\Omega(n,p))\rightarrow\pi_{i}(\Omega_{\mathcal{K}}^{1})$.
\end{proof}

\section{Proof of Theorem 1.2}

Let $A_{m,q}$ express $G_{m,q}$\ or $\widetilde{G}_{m,q}$. Let $i_{A_{m+r}%
}:A_{m,q}\rightarrow A_{m+r,q}$ denote\ the injection mapping an $m$-plane $a$
to the $(m+r)$-plane including $a$ and the canonical vectors $\mathbf{e}%
_{q+m+1},\cdots,\mathbf{e}_{q+m+r}$ in $\mathbb{R}^{q+m+r}$. We use the
notation $\boldsymbol{\Omega}_{\mathcal{K}}^{m}=\Omega_{\mathcal{K}}%
(\gamma_{G_{n+m}}^{n+m},TP\oplus\varepsilon_{P}^{m})$.

\begin{lemma}
The map $i_{A_{m+r}}:A_{m,q}\rightarrow A_{m+r,q}$ is a homotopy $m$-equivalence.
\end{lemma}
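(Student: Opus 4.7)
The plan is to reduce $i_{A_{m+r}}$ to the standard stabilization of Grassmannians via orthogonal complements, and then run the classical Stiefel-fibration argument. Sending an $m$-plane $a\subset\mathbb{R}^{m+q}$ to its orthogonal complement $a^{\perp}$ yields canonical diffeomorphisms $A_{m,q}\cong A_{q,m}$ (in both the unoriented and oriented cases, giving $a^{\perp}$ the induced orientation). Since the orthogonal complement of $a\oplus\langle\mathbf{e}_{q+m+1},\ldots,\mathbf{e}_{q+m+r}\rangle$ inside $\mathbb{R}^{q+m+r}$ is precisely $a^{\perp}$ regarded as a $q$-plane in the larger ambient space via the inclusion on the first $m+q$ coordinates, this identification carries $i_{A_{m+r}}$ to the standard stabilization $A_{q,m}\hookrightarrow A_{q,m+r}$ coming from $\mathbb{R}^{m+q}\hookrightarrow\mathbb{R}^{m+q+r}$. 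It therefore suffices to show the standard stabilization is a homotopy $m$-equivalence.

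Next I would invoke the principal bundle $H(q)\to V_{q,m+q}\to A_{q,m}$, where $V_{q,N}$ denotes the Stiefel manifold of orthonormal $q$-frames in $\mathbb{R}^{N}$ and $H(q)=O(q)$ in the unoriented case, $H(q)=SO(q)$ in the oriented case. The stabilization lifts to a morphism of such principal bundles whose fiber map is the identity on $H(q)$ and whose total-space map is the linear inclusion $V_{q,m+q}\hookrightarrow V_{q,m+r+q}$. The classical fact that $V_{q,N}$ is $(N-q-1)$-connected then tells us $V_{q,m+q}$ is $(m-1)$-connected while $V_{q,m+r+q}$ is $(m+r-1)$-connected.

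The final step is to compare the two homotopy long exact sequences. For $0\leq i\leq m-1$, the connecting maps provide natural isomorphisms $\pi_{i}(A_{q,m})\cong\pi_{i-1}(H(q))\cong\pi_{i}(A_{q,m+r})$ on which $(i_{A_{m+r}})_{\ast}$ acts as the identity; for $i=m$, the target still satisfies $\pi_{m}(A_{q,m+r})\cong\pi_{m-1}(H(q))$, whereas the source only affords a surjection $\pi_{m}(A_{q,m})\twoheadrightarrow\pi_{m-1}(H(q))$ since $\pi_{m}(V_{q,m+q})$ may be nonzero, and naturality of the boundary map then forces $(i_{A_{m+r}})_{\ast}$ to be surjective on $\pi_{m}$. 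I do not anticipate substantive difficulties; the only bookkeeping concerns naturality, ensuring that the sharp surjection in top degree (rather than an isomorphism) is exactly what the paper's definition of homotopy $m$-equivalence requires.
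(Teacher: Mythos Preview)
Your argument is correct and matches the paper's definition of homotopy $m$-equivalence (isomorphism on $\pi_i$ for $i<m$, epimorphism on $\pi_m$). The orthogonal-complement reduction to the standard stabilization $A_{q,m}\hookrightarrow A_{q,m+r}$ is valid in both the unoriented and oriented cases, and the comparison of long exact sequences for the principal bundles $H(q)\to V_{q,N}\to A_{q,N-q}$, together with the $(N-q-1)$-connectivity of $V_{q,N}$, gives exactly the claimed connectivity.

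The paper takes a different route. Rather than dualizing via $a\mapsto a^{\perp}$, it factors $i_{A_{m+r}}$ directly as $\rho_2\circ\rho_0$ through the intermediate homogeneous space $O(q+m+r)/\bigl(O(q)\times O(m)\times E_r\bigr)$: here $\rho_0$ is the fiber inclusion of the bundle $\rho_1$ over $O(q+m+r)/\bigl(O(q+m)\times E_r\bigr)\cong V_{r,q+m+r}$, hence a homotopy $(q+m-1)$-equivalence, while $\rho_2$ has fiber $O(m+r)/O(m)\cong V_{r,m+r}$, hence is a homotopy $m$-equivalence. Both arguments ultimately rest on the connectivity of Stiefel manifolds, but they organize it differently: the paper composes two fibrations over the original Grassmannians, whereas you pass to the complementary Grassmannians and use a single fibration with identity on the structure group. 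Your approach is arguably more transparent, since it reduces immediately to the textbook stabilization statement; the paper's approach avoids the orthogonal-complement step and works directly with the map as defined.
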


\begin{proof}
We only prove the unoriented case. The proof in the oriented case is similar.
Let us consider the diagram with the canonical maps as described%
\[%
\begin{array}
[c]{ccc}%
A_{m,q} & \overset{\rho_{0}}{\longrightarrow} & O(q+m+r)/O(q)\times O(m)\times
E_{r}\\
i_{A_{m+r}}\downarrow\text{ \ \ \ \ } & {\scriptsize \rho}_{2}\swarrow\text{
\ } & \downarrow{\scriptsize \rho}_{1}\\
A_{m+r,q} &  & O(q+m+r)/O(q+m)\times E_{r},
\end{array}
\]
where $\rho_{0}$ is induced from the inclusion $\mathbb{R}^{q+m}%
=\mathbb{R}^{q+m}\times0\rightarrow\mathbb{R}^{q+m+r}$ and $\rho_{1}$ and
$\rho_{2}$ are induced from the inclusions $O(q)\times O(m)\rightarrow O(q+m)$
and $O(m)\times E_{r}\rightarrow O(m+r)$ respectively. Since $A_{m,q}$ is a
fiber of the fiber bundle $\rho_{1}$, $\rho_{0}$ is a homotopy $(q+m-1)$%
-equivalence. Since $\rho_{2}$ is a homotopy $m$-equivalence, $i_{A_{m+r}}$ is
also a homotopy $m$-equivalence.
\end{proof}

In the following lemma $p$ is not necessarily larger than $n$.

\begin{lemma}
Let $\Omega(n,p)$ be a $\mathcal{K}$-invariant open subset of $J^{k}(n,p)$.
Then the fiber map $\Delta^{(\Omega,\Omega_{\mathcal{K}})}:\boldsymbol{\Omega
}\rightarrow\boldsymbol{\Omega}_{\mathcal{K}}^{1}$\ is a homotopy
$\min\{n,p-1\}$-equivalence.
\end{lemma}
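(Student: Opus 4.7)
The plan is to realize both $\boldsymbol{\Omega}$ and $\boldsymbol{\Omega}_{\mathcal{K}}^{1}$ as total spaces of locally trivial fiber bundles over $G_{n}\times P$ and $G_{n+1}\times P$ respectively, and then run a comparison argument on the long exact sequences of homotopy groups. Under the projections $\pi_{G_{n}}^{k}\times\pi_{P}^{k}$ and $\pi_{G_{n+1}}^{k}\times\pi_{P}^{k}$, the fiber of $\boldsymbol{\Omega}$ is canonically identified with $\Omega(n,p)$ and the fiber of $\boldsymbol{\Omega}_{\mathcal{K}}^{1}$ with $\Omega_{\mathcal{K}}(n+1,p+1)=\Omega_{\mathcal{K}}^{1}$, the latter being an open subbundle thanks to Lemma 5.2. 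The map $\Delta^{(\Omega,\Omega_{\mathcal{K}})}$ covers the base map $i^{G}\times id_{P}$ and, by its very construction as the associated map to $i_{+1}$, restricts fiberwise to the inclusion $i_{+1}\colon\Omega(n,p)\to\Omega_{\mathcal{K}}^{1}$.

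Next I would assemble two equivalence estimates. By Lemma 7.1 applied with $m=n$ and $r=1$, the inclusion $i^{G}\colon G_{n}\to G_{n+1}$ is a homotopy $n$-equivalence, and hence so is $i^{G}\times id_{P}\colon G_{n}\times P\to G_{n+1}\times P$ (the identity on $P$ is a homotopy equivalence). By Proposition 6.3 the fiber map $i_{+1}\colon\Omega(n,p)\to\Omega_{\mathcal{K}}^{1}$ is a homotopy $(p-1)$-equivalence.

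The conclusion then follows from a standard ladder argument. The bundle map $\Delta^{(\Omega,\Omega_{\mathcal{K}})}$ induces a commutative diagram of long exact homotopy sequences
\[
\begin{array}{ccccccccc}
\cdots & \to & \pi_{i}(\Omega(n,p)) & \to & \pi_{i}(\boldsymbol{\Omega}) & \to & \pi_{i}(G_{n}\times P) & \to & \cdots \\
 & & \downarrow (i_{+1})_{\ast} & & \downarrow (\Delta^{(\Omega,\Omega_{\mathcal{K}})})_{\ast} & & \downarrow (i^{G}\times id_{P})_{\ast} & & \\
\cdots & \to & \pi_{i}(\Omega_{\mathcal{K}}^{1}) & \to & \pi_{i}(\boldsymbol{\Omega}_{\mathcal{K}}^{1}) & \to & \pi_{i}(G_{n+1}\times P) & \to & \cdots
\end{array}
\]
in which the vertical fiber and base maps are $(p-1)$- and $n$-equivalences respectively. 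A direct application of the $5$-lemma (in both its iso and epi forms) then shows that $(\Delta^{(\Omega,\Omega_{\mathcal{K}})})_{\ast}$ is an isomorphism for $i<\min\{n,p-1\}$ and an epimorphism for $i=\min\{n,p-1\}$, which is precisely the assertion that $\Delta^{(\Omega,\Omega_{\mathcal{K}})}$ is a homotopy $\min\{n,p-1\}$-equivalence.

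The only substantive work is the honest verification that the fiber restriction of $\Delta^{(\Omega,\Omega_{\mathcal{K}})}$ is genuinely the map $i_{+1}$ of Proposition 6.3; once that bookkeeping is in place, the 5-lemma step is entirely routine and the principal analytic input (namely, the fiber-level equivalence) has already been handled in Section 6.
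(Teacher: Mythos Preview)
Your proposal is correct and follows essentially the same approach as the paper's own proof: both compare the long exact homotopy sequences of the fiber bundles $\boldsymbol{\Omega}\to G_{n}\times P$ and $\boldsymbol{\Omega}_{\mathcal{K}}^{1}\to G_{n+1}\times P$, invoke Lemma~7.1 for the base map and Proposition~6.3 for the fiber map, and conclude via the five lemma (the paper cites \cite[Lemma~3.2]{CaEilen} for the epimorphism step, which is the same four-lemma variant you allude to).
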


\begin{proof}
Consider the commutative diagram%
\[%
\begin{array}
[c]{ccccccc}%
\longrightarrow & \pi_{i}(\Omega(n,p)) & \overset{\underrightarrow{\partial}%
}{} & \pi_{i}(\boldsymbol{\Omega}) & \longrightarrow & \pi_{i}(G_{n,\ell
}\times P) & \longrightarrow\\
& \downarrow &  & \downarrow &  & \downarrow & \\
\longrightarrow & \pi_{i}(\Omega_{\mathcal{K}}^{1}) & \overset
{\underrightarrow{\partial}}{} & \pi_{i}(\boldsymbol{\Omega}_{\mathcal{K}}%
^{1}) & \longrightarrow & \pi_{i}(G_{n+1,\ell}\times P) & \longrightarrow
\end{array}
\]
which is induced from the homomorphisms $(\Delta^{(\Omega,\Omega_{\mathcal{K}%
})})_{\ast}$\ of the exact sequence of\ the homotopy groups for the fiber
bundle $\boldsymbol{\Omega}$\ over $G_{n,\ell}\times P$\ to that for the fiber
bundle $\boldsymbol{\Omega}_{\mathcal{K}}^{1}$\ over $G_{n+1,\ell}\times P$.
Then it follows from Lemma 7.1 for $(i^{G})_{\ast}$ and Proposition 6.3 that
if $0\leqq i<\min\{n,p-1\}$, then $(\Delta^{(\Omega,\Omega_{\mathcal{K}}%
)})_{\ast}:\pi_{i}(\boldsymbol{\Omega})\rightarrow\pi_{i}(\boldsymbol{\Omega
}_{\mathcal{K}}^{1})$ is an isomorphism by the five lemma\ and if
$i=\min\{n,p-1\}$, then it is an epimorphism by \cite[Lemma 3.2]{CaEilen}.
\end{proof}

We are now ready to prove Theorem 1.2.

\begin{proof}
[Proof of Theorem 1.2]We only prove the unoriented case. In the oriented case
we only need the Thom Isomorphism Theorem under the coefficient group
$\mathbb{Z}$. It follows from Lemma 7.2 and the Whitehead theorem
(\cite[Section 5, 9 Theorem]{SpaAT}) that%
\[
(\Delta^{(\Omega,\Omega_{\mathcal{K}})})_{\ast}:H_{i}(\boldsymbol{\Omega
})\rightarrow H_{i}(\boldsymbol{\Omega}_{\mathcal{K}}^{1})
\]
is an isomorphism for $0\leqq i<n$ and an epimorphism for $i=n$.

Let $\ell\gg n,p$.\ By virtue of the Thom Isomorphism Theorem, we have that%
\[
T(\mathbf{b}(\widehat{\gamma})^{(\Omega,\Omega_{\mathcal{K}})})_{\ast
}:H_{i+\ell}\left(  T(\widehat{\gamma}_{\boldsymbol{\Omega}}^{\ell
});\mathbb{Z}/(2)\right)  \longrightarrow H_{i+\ell}\left(  T(\widehat{\gamma
}_{\boldsymbol{\Omega}_{\mathcal{K}}^{1}}^{\ell});\mathbb{Z}/(2)\right)
\]
is an isomorphism for $-\ell\leqq i<n$ and an epimorphism for $i=n$.

Let $\mathcal{C}$ denote the Serre class of finite groups of orders prime to
two. Then it follows from \cite[Proposition 2, p. 277]{Serre} that%
\begin{equation}
T(\mathbf{b}(\widehat{\gamma})^{(\Omega,\Omega_{\mathcal{K}})})_{\ast
}:H_{i+\ell}\left(  T(\widehat{\gamma}_{\boldsymbol{\Omega}}^{\ell
});\mathbb{Z}\right)  \longrightarrow H_{i+\ell}\left(  T(\widehat{\gamma
}_{\boldsymbol{\Omega}_{\mathcal{K}}^{1}}^{\ell});\mathbb{Z}\right)
\end{equation}
is a $\mathcal{C}$-isomorphism for $-\ell\leqq i<n$ and a $\mathcal{C}%
$-epimorphism for $i=n$.\ By the Whitehead theorem modulo $\mathcal{C}$
(\cite[Theorem 3, p. 276]{Serre}),
\[
T(\mathbf{b}(\widehat{\gamma})^{(\Omega,\Omega_{\mathcal{K}})})_{\ast}%
:\pi_{i+\ell}\left(  T(\widehat{\gamma}_{\boldsymbol{\Omega}}^{\ell})\right)
\longrightarrow\pi_{i+\ell}\left(  T(\widehat{\gamma}_{\boldsymbol{\Omega
}_{\mathcal{K}}^{1}}^{\ell})\right)
\]
is a $\mathcal{C}$-isomorphism for $-\ell\leqq i<n$ and a $\mathcal{C}%
$-epimorphism for $i=n$.

Let $\mathfrak{N}_{\boldsymbol{\Omega}}^{i}$\ (respectively $\mathfrak{N}%
_{\boldsymbol{\Omega}_{\mathcal{K}}^{1}}^{i}$) denote the cobordism group of
smooth maps $s$\ of closed $i$-manifolds $M$ to $\boldsymbol{\Omega}$
(respectively $\boldsymbol{\Omega}_{\mathcal{K}}^{1}$) under the corresponding
cobordism of smooth maps such that there exists a bundle map of the stable
$\ell$-dimensional normal bundle $\nu_{M}$ to $\widehat{\gamma}%
_{\boldsymbol{\Omega}}^{\ell}$ (respectively $\widehat{\gamma}%
_{\boldsymbol{\Omega}_{\mathcal{K}}^{1}}^{\ell}$) covering $s$. It follows
from the standard argument in the cobordism theory (see, for example,
\cite{Sto}) that%
\[
\mathfrak{N}_{\boldsymbol{\Omega}}^{i}\approx\pi_{i+\ell}\left(
T(\widehat{\gamma}_{\boldsymbol{\Omega}}^{\ell})\right)  \text{ \ and
\ }\mathfrak{N}_{\boldsymbol{\Omega}_{\mathcal{K}}^{1}}^{i}\approx\pi_{i+\ell
}\left(  T(\widehat{\gamma}_{\boldsymbol{\Omega}_{\mathcal{K}}^{1}}^{\ell
})\right)  .
\]
Consequently, any element of $\pi_{i+\ell}\left(  T(\widehat{\gamma
}_{\boldsymbol{\Omega}}^{\ell})\right)  $ and $\pi_{i+\ell}\left(
T(\widehat{\gamma}_{\boldsymbol{\Omega}_{\mathcal{K}}^{1}}^{\ell})\right)  $
is of order two. This together with (7.1) proves Theorem 1.2.
\end{proof}

By Lemmas 5.2 and 5.4 it will be easy to see inductively that $\Omega
_{\mathcal{K}}(n+m,p+m)$ is the subset of $J^{k}(n+m,p+m)$ which consists of
all $\mathcal{K}$-orbits $\mathcal{K}(i_{+1}(z))$ for $k$-jets $z\in
\Omega_{\mathcal{K}}(n+m-1,p+m-1)$ for $m>1$ and that if $\Omega
(n,p)=\Omega^{I}(n,p)\neq\emptyset$, then we have $\Omega^{I}(n+m,p+m)=\Omega
_{\mathcal{K}}(n+m,p+m)$. By using the map $J^{k}(n+m,p+m)\rightarrow
J^{k}(n+m+q,p+m+q)$ sending a jet $j_{0}^{k}f$ to $j_{0}^{k}(f\times
id_{\mathbb{R}^{q}})$, we have the canonical map $\boldsymbol{\Omega
}_{\mathcal{K}}^{m}\rightarrow\boldsymbol{\Omega}_{\mathcal{K}}^{m+q}$. This
induces the canonical bundle map $\mathbf{b}(\widehat{\gamma})^{(\Omega
_{\mathcal{K}}^{m},\Omega_{\mathcal{K}}^{m+q})}:\widehat{\gamma}%
_{\boldsymbol{\Omega}_{\mathcal{K}}^{m}}^{\ell}\rightarrow\widehat{\gamma
}_{\boldsymbol{\Omega}_{\mathcal{K}}^{m+q}}^{\ell}$ and the associated map
$T(\mathbf{b}(\widehat{\gamma})^{(\Omega_{\mathcal{K}}^{m},\Omega
_{\mathcal{K}}^{m+q})}):T(\widehat{\gamma}_{\boldsymbol{\Omega}_{\mathcal{K}%
}^{m}}^{\ell})\rightarrow T(\widehat{\gamma}_{\boldsymbol{\Omega}%
_{\mathcal{K}}^{m+q}}^{\ell})$. Let $\theta$ denote an integer such that
$\theta=1$ when $n<p$ and $\theta=n+2-p$\ when $n\geqq p\geqq2$.

\begin{proposition}
Let $\theta$ be the integer as above. Then the homomorphism
\[
\lim_{\ell\rightarrow\infty}\pi_{n+\ell}\left(  T(\widehat{\gamma
}_{\boldsymbol{\Omega}_{\mathcal{K}}^{\theta}}^{\ell})\right)  \longrightarrow
\lim_{\ell\rightarrow\infty}\pi_{n+\ell}\left(  T(\widehat{\gamma
}_{\boldsymbol{\Omega}_{\mathcal{K}}^{\theta+q}}^{\ell})\right)
\]
induced from the above map $T(\mathbf{b}(\widehat{\gamma})^{(\Omega
_{\mathcal{K}}^{\theta},\Omega_{\mathcal{K}}^{\theta+q})})$ is an isomorphism
for $q\geqq0$.
\end{proposition}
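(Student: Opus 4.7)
The plan is to apply the connectivity/Serre-class argument of the proof of Theorem~1.2 inductively along the tower $\boldsymbol{\Omega}_{\mathcal{K}}^{\theta}\to\boldsymbol{\Omega}_{\mathcal{K}}^{\theta+1}\to\cdots\to\boldsymbol{\Omega}_{\mathcal{K}}^{\theta+q}$. The paragraph preceding the proposition records that each $\Omega_{\mathcal{K}}^m$ is a $\mathcal{K}$-invariant open subset of $J^k(n+m,p+m)$ (by iterated use of Lemmas~5.2 and~5.4), so Proposition~6.3, applied with $\Omega_{\mathcal{K}}^m$ in place of $\Omega$ and $(n+m,p+m)$ in place of $(n,p)$, shows that $i_{+1}:\Omega_{\mathcal{K}}^m\to\Omega_{\mathcal{K}}^{m+1}$ is a homotopy $(p+m-1)$-equivalence. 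Combined with Lemma~7.1, which gives that $i^G:G_{n+m}\to G_{n+m+1}$ is a homotopy $(n+m)$-equivalence, the five-lemma argument of Lemma~7.2 yields that
\[
\Delta^{(\Omega_{\mathcal{K}}^m,\Omega_{\mathcal{K}}^{m+1})}:\boldsymbol{\Omega}_{\mathcal{K}}^m\longrightarrow\boldsymbol{\Omega}_{\mathcal{K}}^{m+1}
\]
is a homotopy $\min\{n+m,\,p+m-1\}$-equivalence.

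The integer $\theta$ is chosen precisely so that $\min\{n+m,p+m-1\}\geqq n+1$ holds for every $m\geqq\theta$. When $n<p$ and $\theta=1$, both $n+m$ and $p+m-1$ are at least $n+1$. When $n\geqq p\geqq 2$ and $\theta=n+2-p$, at $m=\theta$ we have $p+m-1=n+1$ and $n+m=2n+2-p\geqq n+2$, and the bound only improves for $m>\theta$. (If we tried $m=\theta-1$ in the second case, we would get only $p+m-1=n$, which is not enough; this is what forces the choice of $\theta$.)

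Having ensured that each $\Delta^{(\Omega_{\mathcal{K}}^m,\Omega_{\mathcal{K}}^{m+1})}$ with $m\geqq\theta$ is at least an $(n+1)$-equivalence, I would then repeat, for each consecutive step, the homological portion of the proof of Theorem~1.2. In the unoriented case, the $\mathbb{Z}/(2)$-Thom isomorphism promotes the induced homology isomorphism of the base in degrees $\leqq n$ to an isomorphism between $H_{i+\ell}(T(\widehat{\gamma}_{\boldsymbol{\Omega}_{\mathcal{K}}^m}^\ell);\mathbb{Z}/(2))$ and $H_{i+\ell}(T(\widehat{\gamma}_{\boldsymbol{\Omega}_{\mathcal{K}}^{m+1}}^\ell);\mathbb{Z}/(2))$ for $-\ell\leqq i\leqq n$; \cite[Proposition~2, p.~277]{Serre} upgrades this to a $\mathcal{C}$-isomorphism on integral homology, and the Whitehead theorem modulo $\mathcal{C}$ of \cite[Theorem~3, p.~276]{Serre} then gives a $\mathcal{C}$-isomorphism on $\pi_{n+\ell}$. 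The cobordism-theoretic identification $\mathfrak{N}_{\boldsymbol{\Omega}_{\mathcal{K}}^m}^n\approx\pi_{n+\ell}(T(\widehat{\gamma}_{\boldsymbol{\Omega}_{\mathcal{K}}^m}^\ell))$ from the end of the proof of Theorem~1.2 forces these groups to be $2$-torsion, so the $\mathcal{C}$-isomorphism is genuine. In the oriented case, $\widehat{\gamma}^\ell$ over $\widetilde{G}_{n+m,\ell}$ is orientable, so the $\mathbb{Z}$-Thom isomorphism and the ordinary Whitehead theorem suffice. Composing the $q$ consecutive isomorphisms yields the desired result.

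The main obstacle is the connectivity bookkeeping in the $n\geqq p\geqq 2$ case, where one must verify that $\theta=n+2-p$ is simultaneously large enough to make every $\Delta$ at least an $(n+1)$-equivalence and as small as possible (so that the hypothesis $m\geqq\theta$ in the proposition is not vacuous). Once this step is arranged, the remainder is a mechanical repetition of the Serre-class argument already present in the proof of Theorem~1.2, and no new singularity-theoretic input is required.
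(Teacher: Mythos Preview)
Your proof is correct and follows essentially the same route as the paper: iterate Lemma~7.2 to see that the composite base-level map $\boldsymbol{\Omega}_{\mathcal{K}}^{\theta}\to\boldsymbol{\Omega}_{\mathcal{K}}^{\theta+q}$ is a homotopy $\min\{n+\theta,\,p+\theta-1\}=(n+1)$-equivalence, then repeat verbatim the Thom isomorphism/Serre-class/$2$-torsion argument from the proof of Theorem~1.2. Your connectivity bookkeeping for the two cases of $\theta$ is more explicit than the paper's, but the substance is identical.
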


\begin{proof}
We only prove the unoriented case. The proof proceeds as in the Proof of
Theorem 1.2. By the iterated use of Lemma 7.2, we have that $T(\mathbf{b}%
(\widehat{\gamma})^{(\Omega_{\mathcal{K}}^{\theta},\Omega_{\mathcal{K}%
}^{\theta+q})})$ is a homotopy $\min\{n+\theta,p+\theta-1\}$-equivalence, and
hence, a homotopy $(n+1)$-equivalence. Therefore,
\[
(T(\mathbf{b}(\widehat{\gamma})^{(\Omega_{\mathcal{K}}^{\theta},\Omega
_{\mathcal{K}}^{\theta+q})}))_{\ast}:H_{i}(\boldsymbol{\Omega}_{\mathcal{K}%
}^{\theta})\longrightarrow H_{i}(\boldsymbol{\Omega}_{\mathcal{K}}^{\theta
+q})
\]
is an isomorphism for $0\leqq i\leqq n$ and and an epimorphism for $i=n+1$. By
the argument similar to that in the Proof of Theorem 1.2, we have that
\begin{equation}
(T(\mathbf{b}(\widehat{\gamma})^{(\Omega_{\mathcal{K}}^{\theta},\Omega
_{\mathcal{K}}^{\theta+q})}))_{\ast}:\pi_{i+\ell}\left(  T(\widehat{\gamma
}_{\boldsymbol{\Omega}_{\mathcal{K}}^{\theta}}^{\ell})\right)  \longrightarrow
\pi_{i+\ell}\left(  T(\widehat{\gamma}_{\boldsymbol{\Omega}_{\mathcal{K}%
}^{\theta+q}}^{\ell})\right)
\end{equation}
is a $\mathcal{C}$-isomorphism for $-\ell\leqq i\leqq n$ and a $\mathcal{C}%
$-epimorphism for $i=n+1$. Since any element of the groups in (7.2) is of
order two, we obtain the proposition.
\end{proof}

\section{Classifying space}

In this section we will induce the classifying space $B_{\mathfrak{O}}$\ in
Theorem 1.3.

We consider the vector bundle $J^{k}(\gamma_{G_{n+m}}^{n+m},\mathbb{R}^{p+m})$
with the projection $\pi_{G_{n+m}}^{k}$ onto $G_{n+m}:=G_{n+m}\times\{$a
point$\}$ and the open subbundle $\Omega_{\mathcal{K}}(\gamma_{G_{n+m}}%
^{n+m},\mathbb{R}^{p+m})$ of $J^{k}(\gamma_{G_{n+m}}^{n+m},\mathbb{R}^{p+m})$
associated to $\Omega_{\mathcal{K}}^{m}$, where $\mathbb{R}^{p+m}$ is regarded
as a vector bundle over a point. As in Section 3, the spaces $\{T((\pi_{G_{m}%
}^{k})^{\ast}(\widehat{\gamma}_{G_{m}}^{\ell})|_{\mathbf{\Omega}_{\mathcal{K}%
}})\}_{\ell}$ constitute a spectrum.

Let $\varkappa$\ denote any integer with $\varkappa\geqq p+3$. Let $\ell\gg
n,p,\varkappa$. We set $\mathbb{G}_{n+\theta+\varkappa}=A_{n+\theta
+\varkappa,\ell+p}$. Let $P$\ be embedded in $\mathbb{R}^{p+\varkappa}$. Let
$x\in G_{n+\theta}$, $y\in P$ and $(\nu_{P})_{y}$ be the orthogonal complement
of $T_{y}P$ in $\mathbb{R}^{p+\varkappa}$. Let $c:G_{n+\theta}\times
P\rightarrow\mathbb{G}_{n+\theta+\varkappa}$\ denote the map such that
$c(x,y)$ is the $(n+\theta+\varkappa)$-subspace $x\oplus(\nu_{P})_{y}$ in
$\mathbb{R}^{n+\theta+\varkappa+\ell+p}$. Let $\mathbf{c}:\pi_{G_{n+\theta}%
}^{\ast}(\gamma_{G_{n+\theta}}^{n+\theta})\oplus\pi_{P}^{\ast}(\nu
_{P})\rightarrow\gamma_{\mathbb{G}_{n+\theta+\varkappa}}^{n+\theta+\varkappa}%
$\ be the bundle map which is canonically induced to cover the classifying map
$c$.

Let $\mathbf{J}^{k}(\gamma^{n+\theta}\oplus\nu,TP\oplus\varepsilon_{P}%
^{\theta}\oplus\nu_{P})$ denote%
\[
\mathrm{Hom}\left(
{\displaystyle\bigoplus\limits_{i=1}^{k}}
S^{i}(\pi_{G_{n+\theta}}^{\ast}(\gamma_{G_{n+\theta}}^{n+\theta})\oplus\pi
_{P}^{\ast}(\nu_{P})),\pi_{P}^{\ast}(TP\oplus\varepsilon_{P}^{\theta}\oplus
\nu_{P})\right)
\]
and define the fiber map%
\begin{equation}
J^{k}(\gamma_{G_{n+\theta}}^{n+\theta},TP\oplus\varepsilon_{P}^{\theta
})\longrightarrow\mathbf{J}^{k}(\gamma^{n+\theta}\oplus\nu,TP\oplus
\varepsilon_{P}^{\theta}\oplus\nu_{P})
\end{equation}
over $G_{n+\theta}\times P$ by mapping $j_{x}^{k}\alpha\in J_{x,y}^{k}%
(\gamma_{G_{n+\theta}}^{n+\theta},TP\oplus\varepsilon_{P}^{\theta})$\ to
$j_{(x,y)}^{k}(\alpha\times id_{(\nu_{P})_{y}})$. We also obtain the bundle
map%
\begin{equation}
\mathbf{J}^{k}(\gamma^{n+\theta}\oplus\nu,TP\oplus\varepsilon_{P}^{\theta
}\oplus\nu_{P})\longrightarrow J^{k}(\gamma_{\mathbb{G}_{n+\theta+\varkappa}%
}^{n+\theta+\varkappa},TP\oplus\varepsilon_{P}^{\theta}\oplus\nu_{P})
\end{equation}
covering $(c,\pi_{P}):G_{n+\theta}\times P\rightarrow\mathbb{G}_{n+\theta
+\varkappa}\times P$\ which is canonically induced from the bundle map
$\mathbf{c}$\ and $id_{TP\oplus\varepsilon_{P}^{\theta}\oplus\nu_{P}}$.\ It
follows from Lemma 7.1 that $(c,\pi_{P})$\ is a homotopy $(n+\theta
)$-equivalence. The composite of the maps in (8.1) and (8.2) on the fibers
over $(x,y)$\ and $(c(x,y),y)$ induces a map%
\[
\Omega_{\mathcal{K}}((\gamma_{G_{n+\theta}}^{n+\theta})_{x},T_{y}%
P\oplus\varepsilon_{y}^{\theta})\longrightarrow\Omega_{\mathcal{K}}%
((\gamma_{\mathbb{G}_{n+\theta+\varkappa}}^{n+\theta+\varkappa})_{c(x,y)}%
,T_{y}P\oplus\varepsilon_{y}^{\theta}\oplus(\nu_{P})_{y}),
\]
which is a homotopy $(n+1)$-equivalence by the iterated use of Proposition 6.3
and $G_{n+\theta}\times P\rightarrow\mathbb{G}_{n+\theta+\varkappa}\times P$
is a homotopy $(n+\theta)$-equivalence by Lemma 7.1. Consequently, we obtain
the fiber map%
\begin{equation}
\mathbf{j}_{\Omega_{\mathcal{K}}}:\Omega_{\mathcal{K}}(\gamma_{G_{n+\theta}%
}^{n+\theta},TP\oplus\varepsilon_{P}^{\theta})\longrightarrow\Omega
_{\mathcal{K}}(\gamma_{\mathbb{G}_{n+\theta+\varkappa}}^{n+\theta+\varkappa
},TP\oplus\varepsilon_{P}^{\theta}\oplus\nu_{P})
\end{equation}
covering $(c,\pi_{P})$, which is also a homotopy $(n+1)$-equivalence.

Trivialization $TP\oplus\nu_{P}\rightarrow\varepsilon_{P}^{p+\varkappa}$
induces the bundle isomorphism%
\[
J^{k}(\gamma_{\mathbb{G}_{n+\theta+\varkappa}}^{n+\theta+\varkappa}%
,TP\oplus\varepsilon_{P}^{\theta}\oplus\nu_{P})\longrightarrow J^{k}%
(\gamma_{\mathbb{G}_{n+\theta+\varkappa}}^{n+\theta+\varkappa},\mathbb{R}%
^{p+\theta+\varkappa})\times P
\]
over $\mathbb{G}_{n+\theta+\varkappa}\times P$, where $\mathbb{R}%
^{p+\theta+\varkappa}$\ is regarded as the trivial vector bundle over a
point.\ Let $\Omega_{\mathcal{K}}(\gamma_{\mathbb{G}_{n+\theta+\varkappa}%
}^{n+\theta+\varkappa},\mathbb{R}^{p+\theta+\varkappa})$ be the open subbundle
associated to $\Omega_{\mathcal{K}}^{\theta+\varkappa}$. Then we have the
bundle map%
\begin{equation}
\mathbf{k}_{\Omega_{\mathcal{K}}}:\Omega_{\mathcal{K}}(\gamma_{\mathbb{G}%
_{n+\theta+\varkappa}}^{n+\theta+\varkappa},TP\oplus\varepsilon_{P}^{\theta
}\oplus\nu_{P})\longrightarrow\Omega_{\mathcal{K}}(\gamma_{\mathbb{G}%
_{n+\theta+\varkappa}}^{n+\theta+\varkappa},\mathbb{R}^{p+\theta+\varkappa
})\times P
\end{equation}
over $\mathbb{G}_{n+\theta+\varkappa}\times P$. In the following we identify
the two spaces in (8.4). Thus we have the following lemma.

\begin{lemma}
The fiber map $\mathbf{k}_{\Omega_{\mathcal{K}}}\circ\mathbf{j}_{\Omega
_{\mathcal{K}}}$ covering $(c,\pi_{P})$\ is a homotopy $(n+1)$-equivalence.
\end{lemma}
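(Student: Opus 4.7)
The plan is to reduce the assertion to two ingredients that have essentially been assembled in the paragraphs leading up to the lemma, and then combine them. First I would observe that $\mathbf{k}_{\Omega_{\mathcal{K}}}$ is a bundle isomorphism covering the identity of $\mathbb{G}_{n+\theta+\varkappa}\times P$: it is induced fiberwise by the trivialization $TP\oplus\nu_{P}\cong\varepsilon_{P}^{p+\varkappa}$, so it is in fact a homeomorphism and hence a homotopy equivalence. This reduces the lemma to showing that $\mathbf{j}_{\Omega_{\mathcal{K}}}$ is a homotopy $(n+1)$-equivalence.

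For $\mathbf{j}_{\Omega_{\mathcal{K}}}$ I would argue fiber-bundle-wise. The base map $(c,\pi_{P}):G_{n+\theta}\times P\longrightarrow\mathbb{G}_{n+\theta+\varkappa}\times P$ sends $(x,y)$ to $(x\oplus(\nu_{P})_{y},y)$; up to a fiberwise-over-$P$ homotopy this is of the form $(i_{\mathbb{G}}\times id_{P})$ composed with a twist that can be absorbed into the classifying data, so by Lemma 7.1 it is a homotopy $(n+\theta)$-equivalence, and in particular a homotopy $(n+1)$-equivalence since $\theta\geqq1$. On the fibers over $(x,y)$ and $(c(x,y),y)$, the induced map
\[
\Omega_{\mathcal{K}}((\gamma_{G_{n+\theta}}^{n+\theta})_{x},T_{y}P\oplus\varepsilon_{y}^{\theta})\longrightarrow\Omega_{\mathcal{K}}((\gamma_{\mathbb{G}_{n+\theta+\varkappa}}^{n+\theta+\varkappa})_{c(x,y)},T_{y}P\oplus\varepsilon_{y}^{\theta}\oplus(\nu_{P})_{y})
\]
corresponds, after a choice of coordinates, to the iteration of $\varkappa$ copies of the stabilization $i_{+1}$. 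Proposition 6.3 tells us that each step $\Omega_{\mathcal{K}}(n+j,p+j)\rightarrow\Omega_{\mathcal{K}}(n+j+1,p+j+1)$ is a homotopy $(p+j-1)$-equivalence, so the composite is at worst a homotopy $(p+\theta-1)$-equivalence, which is precisely a homotopy $(n+1)$-equivalence by the definition of $\theta$ (the case $n<p$ gives $p\geqq n+1$, and $n\geqq p\geqq2$ gives $p+\theta-1=n+1$).

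With these two inputs in hand, I would apply the five lemma to the ladder of long exact homotopy sequences associated to the two fibrations $\boldsymbol{\Omega}_{\mathcal{K}}\rightarrow G_{n+\theta}\times P$ and $\Omega_{\mathcal{K}}(\gamma_{\mathbb{G}_{n+\theta+\varkappa}}^{n+\theta+\varkappa},TP\oplus\varepsilon_{P}^{\theta}\oplus\nu_{P})\rightarrow\mathbb{G}_{n+\theta+\varkappa}\times P$, linked by $\mathbf{j}_{\Omega_{\mathcal{K}}}$ and $(c,\pi_{P})$. Because both the base map and the fiber map are homotopy $(n+1)$-equivalences, standard diagram chasing (isomorphism for $i<n+1$, surjection for $i=n+1$, using an argument of the Cartan--Eilenberg type as in \cite[Lemma~3.2]{CaEilen} at the top dimension) shows that $(\mathbf{j}_{\Omega_{\mathcal{K}}})_{\ast}$ is an isomorphism on $\pi_{i}$ for $i<n+1$ and onto for $i=n+1$. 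Composing with $\mathbf{k}_{\Omega_{\mathcal{K}}}$ (which is an isomorphism on all homotopy groups) yields the lemma.

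The only non-formal point is the verification that the fiberwise composite really is the iterated stabilization, so that Proposition 6.3 applies; the rest is bookkeeping with Lemma 7.1, the five lemma, and the fact that a bundle isomorphism is a homotopy equivalence. This fiberwise identification — unravelling the composite of (8.1), (8.2), and the trivialization (8.4) to match the map governed by $i_{+1}$ on $k$-jet spaces — is the step I expect to require the most care.
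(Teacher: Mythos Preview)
Your proposal is correct and follows essentially the same approach as the paper: the paper's proof is contained in the paragraphs immediately preceding the lemma (it is stated as a conclusion, ``Thus we have the following lemma''), and those paragraphs do exactly what you outline---identify $\mathbf{k}_{\Omega_{\mathcal{K}}}$ as a bundle isomorphism, use Lemma~7.1 for the base map $(c,\pi_P)$, and iterate Proposition~6.3 for the fiber map. Your explicit computation that $p+\theta-1=n+1$ (or $\geqq n+1$ when $n<p$) and your spelling-out of the five-lemma step make the argument more transparent than the paper's terse presentation, but the substance is the same.
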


Let $\pi_{\mathbb{G}_{n+\theta+\varkappa}}^{k}:J^{k}(\gamma_{\mathbb{G}%
_{n+\theta+\varkappa}}^{n+\theta+\varkappa},\mathbb{R}^{p+\theta+\varkappa
})\rightarrow\mathbb{G}_{n+\theta+\varkappa}$ be the canonical projection. Let
$B(\widehat{\gamma}_{\mathbb{G}_{n+\theta+\varkappa}}^{\ell+p},\nu
_{P})_{\Omega_{\mathcal{K}}\times P}$ denote the vector bundle over
$\Omega_{\mathcal{K}}(\gamma_{\mathbb{G}_{n+\theta+\varkappa}}^{n+\theta
+\varkappa},\mathbb{R}^{p+\theta+\varkappa})\times P$ defined by%
\[
B(\widehat{\gamma}_{\mathbb{G}_{n+\theta+\varkappa}}^{\ell+p},\nu_{P}%
)_{\Omega_{\mathcal{K}}\times P}=(\pi_{\mathbb{G}_{n+\theta+\varkappa}}%
^{k})^{\ast}(\widehat{\gamma}_{\mathbb{G}_{n+\theta+\varkappa}}^{\ell
+p})|_{\Omega_{\mathcal{K}}(\gamma_{\mathbb{G}_{n+\theta+\varkappa}}%
^{n+\theta+\varkappa},\mathbb{R}^{p+\theta+\varkappa})}\times\nu_{P}.
\]
This satisfies%
\[
T(B(\widehat{\gamma}_{\mathbb{G}_{n+\theta+\varkappa}}^{\ell+p},\nu
_{P})_{\Omega_{\mathcal{K}}\times P})=T((\pi_{\mathbb{G}_{n+\theta+\varkappa}%
}^{k})^{\ast}(\widehat{\gamma}_{\mathbb{G}_{n+\theta+\varkappa}}^{\ell
+p})|_{\Omega_{\mathcal{K}}(\gamma_{\mathbb{G}_{n+\theta+\varkappa}}%
^{n+\theta+\varkappa},\mathbb{R}^{p+\theta+\varkappa})})\wedge T(\nu_{P}).
\]
Let%
\[%
\begin{array}
[c]{r}%
\mathbf{b}_{\mathbf{k}_{\Omega_{\mathcal{K}}}\circ\mathbf{j}_{\Omega
_{\mathcal{K}}}}:(\pi_{G_{n+\theta}}^{k})^{\ast}(\widehat{\gamma}%
_{G_{n+\theta}}^{\ell})\oplus(\pi_{P}^{k})^{\ast}(TP)|_{\Omega_{\mathcal{K}%
}(\gamma_{\mathbb{G}_{n+\theta}}^{n+\theta},\mathbb{R}^{p+\theta})}\\
\longrightarrow((\pi_{\mathbb{G}_{n+\theta+\varkappa}}^{k})^{\ast}%
(\widehat{\gamma}_{\mathbb{G}_{n+\theta+\varkappa}}^{\ell+p})|_{\Omega
_{\mathcal{K}}(\gamma_{\mathbb{G}_{n+\theta+\varkappa}}^{n+\theta+\varkappa
},\mathbb{R}^{p+\theta+\varkappa})})\times P
\end{array}
\]
be the bundle map covering $\mathbf{k}_{\Omega_{\mathcal{K}}}\circ
\mathbf{j}_{\Omega_{\mathcal{K}}}$ mapping $(\gamma_{G_{n+\theta}}^{n+\theta
})_{x}^{\perp}\oplus T_{y}P$ to $((\gamma_{G_{n+\theta}}^{n+\theta})_{x}%
\oplus(\nu_{P})_{y})^{\perp}$, where $\perp$ denotes the orthogonal
complement. This induces a bundle map%
\[%
\begin{array}
[c]{r}%
\mathbf{B}:((\pi_{G_{n+\theta}}^{k})^{\ast}(\widehat{\gamma}_{G_{n+\theta}%
}^{\ell})\oplus(\pi_{P}^{k})^{\ast}(TP\oplus\nu_{P}))|_{\Omega_{\mathcal{K}%
}(\gamma_{G_{n+\theta}}^{n+\theta},TP\oplus\varepsilon_{P}^{\theta})}\\
\longrightarrow B(\widehat{\gamma}_{\mathbb{G}_{n+\theta+\varkappa}}^{\ell
+p},\nu_{P})_{\Omega_{\mathcal{K}}\times P}%
\end{array}
\]
covering%
\[
\mathbf{k}_{\Omega_{\mathcal{K}}}\circ\mathbf{j}_{\Omega_{\mathcal{K}}}%
:\Omega_{\mathcal{K}}(\gamma_{G_{n+\theta}}^{n+\theta},TP\oplus\varepsilon
_{P}^{\theta})\rightarrow\Omega_{\mathcal{K}}(\gamma_{\mathbb{G}%
_{n+\theta+\varkappa}}^{n+\theta+\varkappa},\mathbb{R}^{p+\theta+\varkappa
})\times P,
\]
which is a bundle map over $P$ and is a homotopy $(n+1)$-equivalence.

Let $X$ and $Y$\ be connected polyhedra with base points respectively.\ Let
$\{X;Y\}$\ denote the set of $S$-homotopy classes of $S$-maps. Let
$G_{n+\theta+\varkappa}=A_{n+\theta+\varkappa,\ell}$ as in Introduction.

\begin{proposition}
Let $\Omega(n,p)$ denote a nonempty $\mathcal{K}$-invariant open subset. Then
there exists an isomorphism%
\[%
\begin{array}
[c]{l}%
\underset{\ell\rightarrow\infty}{\lim}\pi_{n+\ell}\left(  T(\widehat{\gamma
}_{\Omega_{\mathcal{K}}(\gamma_{G_{n+\theta}}^{n+\theta},TP\oplus
\varepsilon_{P}^{\theta})}^{\ell})\right)  \longrightarrow\\
\underset{\ell\rightarrow\infty}{\lim}\pi_{n+\ell+\varkappa}\left(
T((\pi_{G_{n+\theta+\varkappa}}^{k})^{\ast}(\widehat{\gamma}_{G_{n+\theta
+\varkappa}}^{\ell})|_{\Omega_{\mathcal{K}}(\gamma_{G_{n+\theta+\varkappa}%
}^{n+\theta+\varkappa},\mathbb{R}^{p+\theta+\varkappa})})\wedge T(\nu
_{P})\right)  .
\end{array}
\]

\end{proposition}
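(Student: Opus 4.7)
The plan is to apply the Thom-space functor to the bundle map $\mathbf{B}$ and then to rerun, verbatim, the Serre-class/Whitehead-modulo-$\mathcal{C}$ argument used in the Proof of Theorem 1.2. All the homotopy-theoretic work has already been isolated as Lemma 8.1, which tells us that the base map $\mathbf{k}_{\Omega_{\mathcal{K}}}\circ\mathbf{j}_{\Omega_{\mathcal{K}}}$ covered by $\mathbf{B}$ is a homotopy $(n+1)$-equivalence. What remains is to transport this across the Thom construction and to match the indices on the two sides of the asserted isomorphism.

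First I would identify the two displayed stable homotopy groups with $\pi_{n+\ell+p+\varkappa}$ of the source and target of $T(\mathbf{B})$. Since $TP\oplus\nu_{P}\cong\varepsilon_{P}^{p+\varkappa}$, the Thom space of the domain of $\mathbf{B}$ is canonically $T(\widehat{\gamma}_{\Omega_{\mathcal{K}}(\gamma_{G_{n+\theta}}^{n+\theta},TP\oplus\varepsilon_{P}^{\theta})}^{\ell})\wedge S^{p+\varkappa}$, whose $(n+\ell+p+\varkappa)$-th homotopy group is the LHS by suspension. The Thom space of the target factors as $T((\pi_{\mathbb{G}_{n+\theta+\varkappa}}^{k})^{\ast}(\widehat{\gamma}_{\mathbb{G}_{n+\theta+\varkappa}}^{\ell+p})|_{\Omega_{\mathcal{K}}(\gamma_{\mathbb{G}_{n+\theta+\varkappa}}^{n+\theta+\varkappa},\mathbb{R}^{p+\theta+\varkappa})})\wedge T(\nu_{P})$; its $(n+\ell+p+\varkappa)$-th homotopy group, after reindexing $\ell+p\mapsto\ell$ in the stable limit, is the RHS. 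So it suffices to prove that $T(\mathbf{B})_{\ast}$ is an isomorphism in this degree.

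Next I would run the standard homological step. By Lemma 8.1 and the Whitehead theorem, $(\mathbf{k}_{\Omega_{\mathcal{K}}}\circ\mathbf{j}_{\Omega_{\mathcal{K}}})_{\ast}$ is an isomorphism on $H_{i}(-;\mathbb{Z}/(2))$ for $0\leqq i\leqq n$ and an epimorphism for $i=n+1$. The Thom isomorphism transfers this to the analogous statement for $T(\mathbf{B})_{\ast}$ on mod-$2$ homology in degrees up to $n+\ell+p+\varkappa$ (and epimorphism one step above). Following \cite[Proposition 2, p.~277]{Serre}, this lifts to a $\mathcal{C}$-isomorphism on integral homology, where $\mathcal{C}$ is the Serre class of finite groups of order prime to $2$, and the Whitehead theorem modulo $\mathcal{C}$ \cite[Theorem 3, p.~276]{Serre} promotes this to a $\mathcal{C}$-isomorphism on $\pi_{n+\ell+p+\varkappa}$.

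The main obstacle, as in the Proof of Theorem 1.2, is upgrading the $\mathcal{C}$-isomorphism to an honest isomorphism. In the unoriented case I would argue exactly as there: via the Pontrjagin-Thom construction both stable homotopy groups in the statement are identified with unoriented cobordism groups of appropriately structured maps and are therefore $2$-torsion, so any $\mathcal{C}$-isomorphism between them is automatically an isomorphism. In the oriented case the Serre-class detour is unnecessary; one runs the Thom isomorphism and Whitehead theorem with integer coefficients, which is legitimate because the relevant bundles over the oriented Grassmannians (and $\nu_{P}$, when $P$ is oriented) are orientable. Either way one concludes that $T(\mathbf{B})_{\ast}$ induces the asserted isomorphism.
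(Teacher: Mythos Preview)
Your proposal is correct and follows essentially the same route as the paper: identify the LHS with $\{S^{n+\ell+p+\varkappa};T(\widehat{\gamma}_{\boldsymbol{\Omega}_{\mathcal{K}}^{\theta}}^{\ell}\oplus(\pi_{P}^{k})^{\ast}(TP\oplus\nu_{P}))\}$ via the trivialization $TP\oplus\nu_{P}\cong\varepsilon_{P}^{p+\varkappa}$, then invoke Lemma~8.1 together with the Thom-isomorphism/Serre-class/Whitehead-mod-$\mathcal{C}$ argument of the Proof of Theorem~1.2 to show that $T(\mathbf{B})_{\ast}$ is an isomorphism, and finally reindex $\ell+p\mapsto\ell$ to recover the RHS. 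The paper compresses your middle paragraph into the single phrase ``an argument as in the proof of Theorem~1.2,'' but the content is identical.
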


\begin{proof}
Setting $\boldsymbol{\Omega}_{\mathcal{K}}^{\theta}=\Omega_{\mathcal{K}%
}(\gamma_{G_{n+\theta}}^{n+\theta},TP\oplus\varepsilon_{P}^{\theta})$, we have
that%
\begin{align}
\pi_{n+\ell}\left(  T(\widehat{\gamma}_{\boldsymbol{\Omega}_{\mathcal{K}%
}^{\theta}}^{\ell})\right)   &  \approx\{S^{n+\ell};T(\widehat{\gamma
}_{\boldsymbol{\Omega}_{\mathcal{K}}^{\theta}}^{\ell})\}\\
&  \approx\{S^{n+\ell+p+\varkappa};T(\widehat{\gamma}_{\boldsymbol{\Omega
}_{\mathcal{K}}^{\theta}}^{\ell})\wedge S^{p+\varkappa}\}\nonumber\\
&  \approx\{S^{n+\ell+p+\varkappa};T(\widehat{\gamma}_{\boldsymbol{\Omega
}_{\mathcal{K}}^{\theta}}^{\ell}\oplus\varepsilon_{\boldsymbol{\Omega
}_{\mathcal{K}}^{\theta}}^{p+\varkappa})\}\nonumber\\
&  \approx\{S^{n+\ell+p+\varkappa};T(\widehat{\gamma}_{\boldsymbol{\Omega
}_{\mathcal{K}}^{\theta}}^{\ell}\oplus(\pi_{P}^{k})^{\ast}(TP\oplus\nu
_{P})|_{\boldsymbol{\Omega}_{\mathcal{K}}^{\theta}})\}.\nonumber
\end{align}
By Lemma 8.1 and an argument as in the proof of Theorem 1.2 using the Thom
isomorphism Theorem, the associated map $T(\mathbf{B})$ induces the
isomorphism between the last group in (8.5) and $\{S^{n+\ell+p+\varkappa
};T(B(\widehat{\gamma}_{\mathbb{G}_{n+\theta+\varkappa}}^{\ell+p},\nu
_{P})_{\Omega_{\mathcal{K}}\times P})\}$. Furthermore, we have that
\begin{align}
&  \{S^{n+\ell+p+\varkappa};T(B(\widehat{\gamma}_{\mathbb{G}_{n+\theta
+\varkappa}}^{\ell+p},\nu_{P})_{\Omega_{\mathcal{K}}\times P})\}\\
&  \approx\{S^{n+\ell+p+\varkappa};T((\pi_{\mathbb{G}_{n+\theta+\varkappa}%
}^{k})^{\ast}(\widehat{\gamma}_{\mathbb{G}_{n+\theta+\varkappa}}^{\ell
+p})|_{\Omega_{\mathcal{K}}(\gamma_{\mathbb{G}_{n+\theta+\varkappa}}%
^{n+\theta+\varkappa},\mathbb{R}^{p+\theta+\varkappa})})\wedge T(\nu
_{P})\}\nonumber\\
&  \approx\{S^{n+\ell+\varkappa};T((\pi_{G_{n+\theta+\varkappa}}^{k})^{\ast
}(\widehat{\gamma}_{G_{n+\theta+\varkappa}}^{\ell})|_{\Omega_{\mathcal{K}%
}(\gamma_{G_{n+\theta+\varkappa}}^{n+\theta+\varkappa},\mathbb{R}%
^{p+\theta+\varkappa})})\wedge T(\nu_{P})\}.\nonumber
\end{align}
This proves the proposition.
\end{proof}

In the rest of this section we work in the oriented case and $P$\ should be
connected and oriented. Let $P^{0}$\ be the union of $P$ and the base point.
Consider the duality map $T(\nu_{P})\wedge S^{t}(P^{0})\rightarrow
S^{p+\varkappa+t}$ in \cite{SpaDual}. If $P$ is connected in addition, then
the last group in (8.6) in the oriented case is isomorphic to%
\begin{equation}
\{S^{n+\ell+\varkappa}\wedge S^{t}(P^{0});T((\pi_{\widetilde{G}_{n+\theta
+\varkappa,\ell}}^{k})^{\ast}(\widehat{\gamma}_{\widetilde{G}_{n+\theta
+\varkappa,\ell}}^{\ell})|_{\Omega_{\mathcal{K}}(\gamma_{\widetilde
{G}_{n+\theta+\varkappa,\ell}}^{n+\theta+\varkappa},\mathbb{R}^{p+\theta
+\varkappa})})\wedge S^{p+\varkappa+t}\}.
\end{equation}
Take a representative map $\alpha$ in a homotopy class in (8.7) and consider
the correspondence of a point $y$ in $P$ to a map $\alpha|(S^{n+\ell
+\varkappa+t}\wedge\{y,$base point$\})$. It is not difficult to see that the
set of $S$-homotopy classes in (8.7) is bijective to the following sets of
homotopy classes%
\begin{align*}
&  [P,C^{0}(S^{n+\ell+\varkappa+t},T((\pi_{\widetilde{G}_{n+\theta
+\varkappa,\ell}}^{k})^{\ast}(\widehat{\gamma}_{\widetilde{G}_{n+\theta
+\varkappa,\ell}}^{\ell})|_{\Omega_{\mathcal{K}}(\gamma_{\widetilde
{G}_{n+\theta+\varkappa,\ell}}^{n+\theta+\varkappa},\mathbb{R}^{p+\theta
+\varkappa})})\wedge S^{p+\varkappa+t})]\\
&  \approx\lbrack P,C^{0}(S^{\ell+n-p},T((\pi_{\widetilde{G}_{n+\theta
+\varkappa,\ell}}^{k})^{\ast}(\widehat{\gamma}_{\widetilde{G}_{n+\theta
+\varkappa,\ell}}^{\ell})|_{\Omega_{\mathcal{K}}(\gamma_{\widetilde
{G}_{n+\theta+\varkappa,\ell}}^{n+\theta+\varkappa},\mathbb{R}^{p+\theta
+\varkappa})}))].
\end{align*}
Setting%
\[
B_{\mathfrak{O}}^{x}=\lim_{\ell\rightarrow\infty}C^{0}(S^{\ell+n-p}%
,T((\pi_{\widetilde{G}_{n+\theta+\varkappa,\ell}}^{k})^{\ast}(\widehat{\gamma
}_{\widetilde{G}_{n+\theta+\varkappa,\ell}}^{\ell})|_{\Omega_{\mathcal{K}%
}(\gamma_{\widetilde{G}_{n+\theta+\varkappa,\ell}}^{n+\theta+\varkappa
},\mathbb{R}^{p+\theta+\varkappa})})),
\]
we define the classifying space $B_{\mathfrak{O}}=\lim_{\varkappa
\rightarrow\infty}B_{\mathfrak{O}}^{\varkappa}$ as in Introduction. We have
the following proposition.

\begin{proposition}
Let $\Omega(n,p)$ be as in Proposition 8.2. Let $P$ be a closed connected and
oriented $p$-dimensional manifold. Then there exists an isomorphism%
\[%
\begin{array}
[c]{c}%
\underset{\ell\rightarrow\infty}{\lim}\pi_{n+\ell+\varkappa}\left(
T((\pi_{\widetilde{G}_{n+\theta+\varkappa,\ell}}^{k})^{\ast}(\widehat{\gamma
}_{\widetilde{G}_{n+\theta+\varkappa,\ell}}^{\ell})|_{\Omega_{\mathcal{K}%
}(\gamma_{\widetilde{G}_{n+\theta+\varkappa,\ell}}^{n+\theta+\varkappa
},\mathbb{R}^{p+\theta+\varkappa})})\wedge T(\nu_{P})\right) \\
\longrightarrow\lbrack P,B_{\mathfrak{O}}].
\end{array}
\]

\end{proposition}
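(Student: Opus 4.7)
The plan is to assemble the chain of identifications already sketched in the paragraphs preceding the proposition and then pass to the limit in $\varkappa$. First, I would rewrite the left-hand side as a group of $S$-homotopy classes
\[
\{S^{n+\ell+\varkappa};T((\pi_{\widetilde{G}_{n+\theta+\varkappa,\ell}}^{k})^{\ast}(\widehat{\gamma}_{\widetilde{G}_{n+\theta+\varkappa,\ell}}^{\ell})|_{\Omega_{\mathcal{K}}(\gamma_{\widetilde{G}_{n+\theta+\varkappa,\ell}}^{n+\theta+\varkappa},\mathbb{R}^{p+\theta+\varkappa})})\wedge T(\nu_{P})\}.
\]
Because $P$ is closed, connected and oriented, the Spanier-Whitehead $S$-duality map $T(\nu_{P})\wedge S^{t}(P^{0})\to S^{p+\varkappa+t}$ of \cite{SpaDual} exhibits $T(\nu_{P})$ as the $S$-dual of $S^{t}(P^{0})$; smashing with this dual then provides the isomorphism from this group onto the $S$-homotopy group (8.7) sitting on the other side of the duality.

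Second, I would apply the adjoint correspondence indicated in the text right after (8.7): a base-point-preserving $S$-map $\alpha:S^{n+\ell+\varkappa+t}\wedge P^{0}\to T(\ldots)\wedge S^{p+\varkappa+t}$ is adjoint, via the smash--mapping-space adjunction, to a based map $P\to C^{0}(S^{n+\ell+\varkappa+t},T(\ldots)\wedge S^{p+\varkappa+t})$; applying the suspension-loop adjunction $(p+\varkappa+t)$ times then converts the target into $C^{0}(S^{\ell+n-p},T(\ldots))$. Taking the direct limit as $\ell\to\infty$ yields the bijection onto $[P,B_{\mathfrak{O}}^{\varkappa}]$, and finally, passing to the limit as $\varkappa\to\infty$ together with the definition $B_{\mathfrak{O}}=\lim_{\varkappa\to\infty}B_{\mathfrak{O}}^{\varkappa}$ produces the stated isomorphism onto $[P,B_{\mathfrak{O}}]$.

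The hard part will be verifying that all of these identifications are natural with respect to the stabilization maps in $\ell$ and $\varkappa$ so that the passage to direct limits is legitimate. In particular, one must check that the $S$-duality pairing of \cite{SpaDual} is compatible with suspension on both factors (so that the transition from (8.5)--(8.6) through to (8.7) is functorial in $\ell$ and $\varkappa$); that the smash--mapping-space adjunction behaves correctly with base points (this is precisely why the text uses $P^{0}$ rather than $P$ in the smash product, to supply a canonical base point); and that the spaces $C^{0}(S^{\ell+n-p},T(\ldots))$ form a directed system whose colimit realizes $B_{\mathfrak{O}}^{\varkappa}$, so that the bijections at each finite stage assemble into a single isomorphism. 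None of these points requires a new idea: the proposition is essentially a repackaging of $S$-duality and the smash--mapping-space adjunction, but the bookkeeping across the four indices $\ell,\varkappa,t,k$ is somewhat delicate and provides the bulk of the work.
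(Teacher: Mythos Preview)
Your proposal is correct and follows essentially the same route as the paper: the paper does not give a separate proof of Proposition~8.3 but rather states it as the conclusion of the preceding discussion, which uses exactly the $S$-duality map $T(\nu_P)\wedge S^t(P^0)\to S^{p+\varkappa+t}$ to pass to (8.7), then the adjunction sending $\alpha$ to $y\mapsto\alpha|(S^{n+\ell+\varkappa+t}\wedge\{y,\text{base point}\})$ to reach $[P,C^0(S^{\ell+n-p},T(\ldots))]$, followed by the limits defining $B_{\mathfrak{O}}^\varkappa$ and $B_{\mathfrak{O}}$. Your account is in fact more explicit about the naturality bookkeeping than the paper, which simply asserts that the bijection is ``not difficult to see.''
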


\section{Proof of Theorem 1.3}

The development of the h-principles has been described in detail in \cite{G2}.
We only refer to the Smale-Hirsch Immersion Theorem (\cite{Smale},
\cite{HirsImm}), the Feit $k$-mersion Theorem (\cite{Feit}) and the general
theorems due to Gromov\cite{G1} and du Plessis (\cite{duPMCS}, \cite{duPRS},
\cite{duPContIn}). In particular, du Plessis has proposed a nice condition
called \textquotedblleft extensibility\textquotedblright\ under which the
h-principle holds for $\Omega^{I}$-regular maps or smooth maps with only
$\mathcal{K}$-simple singularities. However, this extensibility condition is
not so effective in the dimensions $n\geqq p$. On the other hand,
\`{E}lia\v{s}berg (\cite{E1}, \cite{E2}) has proved the famous h-principle on
the $1$-jet level for sections $s:N\rightarrow\Omega^{1}(N,P)$ which have a
given fold map $f_{0}$ defined around $s^{-1}(\Sigma^{1}(N,P))$ such that
$(j^{2}f_{0})^{-1}(\Sigma^{1,0}(N,P))=s^{-1}(\Sigma^{1}(N,P))$ and the fold
singularities of any semi-index of $f_{0}$ are not empty.

In order to prove Theorem 1.3 by applying Theorems 1.1 and 1.2 we have to show
that the assumption concerning h-principles is satisfied in the situation of
Theorem 1.3. We have proved the h-principle in (h-P) for fold-maps in
\cite{FoldKyoto} and \cite{Exist}. Recently we have introduced a very
effective condition for the h-principle in (h-P) in \cite{K-class}. As an
application we can prove the following theorem by using \cite{K-class} (see a
proof in \cite{Andoarxiv}). Let $k\gg n,p$ as in Introduction.

\begin{theorem}
Let $n<p$ or $n\geqq p\geqq2$. Let $\Omega(n,p)$ denote a $\mathcal{K}%
$-invariant open subspace in $J^{k}(n,p)$ such that when $n\geqq p\geqq2$,
$\Omega(n,p)$ contains $\Sigma^{n-p+1,0}(n,p)$ at least. Then the h-principle
in (h-P) holds for $\Omega(n,p).$
\end{theorem}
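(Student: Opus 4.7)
The plan is to appeal to the sufficient condition for (h-P) established in \cite{K-class}. That criterion asserts that if an open $\mathcal{K}$-invariant subset $\Omega(n,p)$ satisfies a certain local extension property compatible with the contact group action, then every section of $\Omega(N,P)$ whose restriction to a neighborhood of a closed set $C\subset N$ is holonomic can be homotoped, rel.\ a smaller neighborhood of $C$, to $j^{k}f$ for some $\Omega$-regular map $f\colon N\to P$. Thus the proof reduces to verifying the hypothesis of the criterion of \cite{K-class} in the two dimension regimes $n<p$ and $n\geqq p\geqq2$.

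The first step is local realizability: for any $x\in N$ and any $k$-jet $z\in\Omega(n,p)$, I would exhibit an $\Omega$-regular germ $(N,x)\to P$ whose $k$-jet at $x$ is $z$. Because $\Omega$ is $\mathcal{K}$-invariant and open, Mather's results (used already in Section 5) reduce this to choosing a polynomial representative of each $\mathcal{K}$-orbit meeting $\Omega$ and verifying that small perturbations remain in $\Omega$; this is essentially the content of Lemma 5.2. Once local realizability is in hand, the relative extension procedure of \cite{K-class} can be invoked to run an induction over a handle decomposition of $N\setminus C$. For $n<p$ the inductive step is covered by classical h-principles (Gromov's convex integration, or du Plessis's extensibility theorem for $\mathcal{K}$-invariant opens), since in this codimension range any formal solution that is transverse to the Boardman stratification can be straightened to a holonomic section without the need for further singularities.

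The substantive case is $n\geqq p\geqq2$, where the assumption $\Sigma^{n-p+1,0}(n,p)\subset\Omega(n,p)$ becomes decisive. Here I would follow the Eliashberg-style strategy used in \cite{FoldKyoto} and \cite{Exist}: across each handle, the formal solution is first deformed to be transverse to every Boardman stratum $\Sigma^{I}$, and the obstruction to promoting it to a holonomic $\Omega$-regular map is killed by introducing fold singularities of Boardman symbol $(n-p+1,0)$ in a standard tubular neighborhood, using the normal form and surgery construction built in \cite{K-class}. The $\mathcal{K}$-invariance of $\Omega$ ensures that the local coordinate changes performed during the surgery keep the resulting jets inside $\Omega$, and the fold-containment hypothesis ensures that the newly created singularities themselves lie in $\Omega$.

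The main obstacle will be the second part of Step two, namely checking in the range $n\geqq p\geqq2$ that the fold surgery can be implemented so that the entire one-parameter deformation of jet sections remains inside $\Omega$. This amounts to showing that the path from a transverse formal solution to its holonomic fold replacement can be chosen to factor through $\mathcal{K}$-orbits lying in $\Omega$; the fact that both endpoints lie in $\Omega$ is automatic, but the intermediate jets require the explicit normal-form analysis of \cite{K-class} together with the openness and $\mathcal{K}$-invariance of $\Omega$. With that verification, the hypothesis of the criterion of \cite{K-class} is satisfied and the theorem follows, and the detailed verification is deferred to \cite{Andoarxiv}.
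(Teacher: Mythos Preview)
Your proposal is consistent with the paper's treatment: the paper does not actually prove this theorem in the text but simply asserts it, citing \cite{K-class} for the governing criterion and deferring the verification to \cite{Andoarxiv}. Your sketch expands on what such a proof via \cite{K-class} would involve and ends by deferring to the same reference, so you are aligned with the paper's approach; just be aware that the paper itself offers even less detail than you do, treating the result as imported rather than proved.
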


In particular, we show the following examples of $\Omega(n,p)$:

(i) $\Omega^{I}(n,p)$ such that when $n\geqq p\geqq2$, $I\geqq(n-p+1,0)$
(\cite{hPrinBo}),

(ii) an open subspace consisting of all regular $k$-jets and a finite number
of $\mathcal{K}$-orbits of $\mathcal{K}$-simple singularities such that when
$n\geqq p\geqq2$, it contains all fold jets in addition (\cite{K-class}).

If $\Omega_{\star}(n+1,p+1)=\Omega_{\mathcal{K}}^{1}$, then we write
$\mathfrak{C}(n,P;\Omega)$ simply in place of $\mathfrak{C}(n,P;\Omega
,\Omega_{\mathcal{K}})$ in the following. We have the following corollary of
Theorems 1.1, 1.2 and 9.1.

\begin{corollary}
Let $n<p$. Let $\Omega(n,p)$ and $\Omega_{\mathcal{K}}^{1}$ be the
$\mathcal{K}$-invariant open subsets given in Theorem 1.2. Let $P$ be a
$p$-dimensional manifold. Then the homomorphism%
\[
\omega:\mathfrak{C}(n,P;\Omega)\longrightarrow\lim_{\ell\rightarrow\infty}%
\pi_{n+\ell}\left(  T(\widehat{\gamma}_{\boldsymbol{\Omega}_{\mathcal{K}}^{1}%
}^{\ell})\right)
\]
is an isomorphism.
\end{corollary}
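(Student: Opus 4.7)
The plan is to assemble the corollary from the three preceding results: Theorem 9.1 supplies the h-principle hypotheses, Theorem 1.1 identifies $\mathfrak{C}(n,P;\Omega)$ with the image of $T(\mathbf{b}(\widehat{\gamma})^{(\Omega,\Omega_{\mathcal{K}})})_{\ast}$, and Theorem 1.2 shows that this image is the full target.

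First I would verify the hypotheses of Theorem 1.1 with the choice $\Omega_{\star}(n+1,p+1) = \Omega_{\mathcal{K}}^1 = \Omega_{\mathcal{K}}(n+1,p+1)$. By construction $i_{+1}(\Omega(n,p)) \subset \Omega_{\mathcal{K}}^1$, and Lemma 5.2 shows $\Omega_{\mathcal{K}}^1$ is an open subset of $J^k(n+1,p+1)$ invariant under $L^k(p+1) \times L^k(n+1)$ (since $\mathcal{K}$-orbits are $L^k(p+1) \times L^k(n+1)$-invariant). Because $\Omega(n,p)$ is $\mathcal{K}$-invariant and open, Theorem 9.1 applies to both $\Omega(n,p)$ and $\Omega_{\mathcal{K}}^1$ (in the case $n<p$ no extra condition on fold jets is needed), yielding the h-principle for $\Omega(n,p)$ and the relative h-principle on the existence level in (h-P) for $\Omega_{\mathcal{K}}^1$.

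Next I would invoke Theorem 1.1 directly: the hypotheses just verified give the isomorphism
\[
\omega : \mathfrak{C}(n,P;\Omega) \longrightarrow \mathrm{Im}\bigl(T(\mathbf{b}(\widehat{\gamma})^{(\Omega,\Omega_{\mathcal{K}})})\bigr)
\]
with target sitting as a subgroup of $\lim_{\ell \to \infty} \pi_{n+\ell}(T(\widehat{\gamma}_{\boldsymbol{\Omega}_{\mathcal{K}}^{1}}^{\ell}))$. This is the step that transports the cobordism problem into stable homotopy.

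Finally I would apply Theorem 1.2, which in the hypothesis $n<p$ asserts precisely that
\[
T(\mathbf{b}(\widehat{\gamma})^{(\Omega,\Omega_{\mathcal{K}})})_{\ast} : \lim_{\ell \to \infty} \pi_{n+\ell}\bigl(T(\widehat{\gamma}_{\boldsymbol{\Omega}}^{\ell})\bigr) \longrightarrow \lim_{\ell \to \infty} \pi_{n+\ell}\bigl(T(\widehat{\gamma}_{\boldsymbol{\Omega}_{\mathcal{K}}^{1}}^{\ell})\bigr)
\]
is surjective, so its image is the entire target group. Composing with the isomorphism from Theorem 1.1 identifies $\mathfrak{C}(n,P;\Omega)$ with the full stable homotopy group, completing the proof. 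There is no genuine obstacle here since all the real work has been carried out in the cited results; the only delicate bookkeeping is making sure that the $\Omega_{\star}$ invoked in Theorem 1.1 is exactly the $\Omega_{\mathcal{K}}^1$ appearing in Theorem 1.2, which is why I would make that identification explicit at the outset.
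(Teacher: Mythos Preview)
Your proposal is correct and follows exactly the approach indicated by the paper, which presents this as an immediate corollary of Theorems 1.1, 1.2 and 9.1 without giving a separate proof. Your explicit verification that $\Omega_{\mathcal{K}}^{1}$ satisfies the hypotheses of Theorem 1.1 (via Lemma 5.2 and Theorem 9.1) is a welcome clarification of what the paper leaves implicit.
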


If we apply Theorem 9.1 to \cite[Theorem 9.2]{Sady2}, then under the same
assumption of Theorem 1.3, there exists an isomorphism of%
\begin{equation}
\mathfrak{C}(n,P;\Omega)\longrightarrow\lim_{\ell\rightarrow\infty}\pi
_{n+\ell}\left(  T(\widehat{\gamma}_{\boldsymbol{\Omega}_{\mathcal{K}}^{m}%
}^{\ell})\right)
\end{equation}
for a sufficiently large integer $m$ also in the dimensions $n\geqq p\geqq2$
as well as $n<p$. This isomorphism is nothing but the composite of $\omega$
and the map%
\[
\lim_{\ell\rightarrow\infty}\pi_{n+\ell}\left(  T(\widehat{\gamma
}_{\boldsymbol{\Omega}_{\mathcal{K}}^{1}}^{\ell})\right)  \longrightarrow
\lim_{\ell\rightarrow\infty}\pi_{n+\ell}\left(  T(\widehat{\gamma
}_{\boldsymbol{\Omega}_{\mathcal{K}}^{m}}^{\ell})\right)
\]
which is induced by $T(\mathbf{b}(\widehat{\gamma})^{(\Omega_{\mathcal{K}}%
^{1},\Omega_{\mathcal{K}}^{m})})$.

We are now ready to prove Theorem 1.3.

\begin{proof}
[Proof of Theorem 1.3]If $n<p$, then we have%
\[
\mathfrak{O}(n,P;\Omega)\approx\lim_{\ell\rightarrow\infty}\pi_{n+\ell}\left(
T(\widehat{\gamma}_{\boldsymbol{\Omega}_{\mathcal{K}}^{1}}^{\ell})\right)
\]
by Corollary 9.2. If $n\geqq p\geqq2$, then the homomorphisms%
\begin{equation}
(T(\mathbf{b}(\widehat{\gamma})^{(\Omega_{\mathcal{K}}^{\theta+\varkappa
},\Omega_{\mathcal{K}}^{\theta+\varkappa+q})}))_{\ast}:\pi_{n+\ell}\left(
T(\widehat{\gamma}_{\boldsymbol{\Omega}_{\mathcal{K}}^{\theta+\varkappa}%
}^{\ell})\right)  \longrightarrow\pi_{n+\ell}\left(  T(\widehat{\gamma
}_{\boldsymbol{\Omega}_{\mathcal{K}}^{\theta+\varkappa+q}}^{\ell})\right)
\end{equation}
are isomorphisms for integers $q\geqq0$ by Proposition 7.3. By Propositions
8.2, 8.3 and (9.1) $\mathfrak{O}(n,P;\Omega)$ is isomorphic to
$[P,B_{\mathfrak{O}}]$. This completes the proof.
\end{proof}

\begin{corollary}
Under the same assumption of Theorem 1.3, $\mathfrak{O}(n,P;\Omega
)\otimes\mathbb{Q}$ is isomorphic to $H_{n}(\Omega_{\mathcal{K}}%
(\gamma_{G_{n+\theta+\varkappa}}^{n+\theta+\varkappa},\mathbb{R}%
^{p+\theta+\varkappa})\times P;\mathbb{Q)}$ in the dimensions $n<p$ and
$n\geqq p\geqq2$.
\end{corollary}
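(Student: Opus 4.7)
The plan is to chain together the stable-homotopy identifications established in Sections 7 and 8 with the rational Hurewicz theorem and the Thom–Künneth formulas. First, I would invoke Corollary 9.2 when $n<p$ and formula (9.1) (which rests on Sadykov's theorem together with the h-principle of Theorem 9.1) when $n\geqq p\geqq 2$, followed by Proposition 7.3, to obtain
\[
\mathfrak{O}(n,P;\Omega)\;\approx\;\lim_{\ell\to\infty}\pi_{n+\ell}\!\left(T(\widehat{\gamma}_{\boldsymbol{\Omega}_{\mathcal{K}}^{\theta}}^{\ell})\right).
\]
Proposition 8.2 then converts this to
\[
\lim_{\ell\to\infty}\pi_{n+\ell+\varkappa}\!\left(T(\widehat{\gamma}^{\ell}_{\boldsymbol{X}})\wedge T(\nu_{P})\right),
\]
where $\boldsymbol{X}=\Omega_{\mathcal{K}}(\gamma_{G_{n+\theta+\varkappa}}^{n+\theta+\varkappa},\mathbb{R}^{p+\theta+\varkappa})$ and $\widehat{\gamma}^{\ell}_{\boldsymbol{X}}$ denotes the pullback of $\widehat{\gamma}_{G_{n+\theta+\varkappa,\ell}}^{\ell}$ to $\boldsymbol{X}$; this is the $(n+\varkappa)$-th stable homotopy group of the Thom spectrum $\{T(\widehat{\gamma}^{\ell}_{\boldsymbol{X}})\wedge T(\nu_{P})\}_{\ell}$.

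Next I would tensor with $\mathbb{Q}$. Letting $\mathcal{C}$ be the Serre class of torsion abelian groups and repeating the argument used in the proofs of Theorem 1.2 and Proposition 7.3 but with this larger $\mathcal{C}$, the generalized Hurewicz theorem modulo $\mathcal{C}$ applies in the stable range (the relevant Thom spaces are $(\ell+\varkappa-1)$-connected for large $\ell$), and it yields
\[
\mathfrak{O}(n,P;\Omega)\otimes\mathbb{Q}\;\approx\;\lim_{\ell\to\infty}\widetilde{H}_{n+\ell+\varkappa}\!\left(T(\widehat{\gamma}^{\ell}_{\boldsymbol{X}})\wedge T(\nu_{P});\mathbb{Q}\right).
\]

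Finally, in the oriented case the base Grassmannian is $\widetilde{G}_{n+\theta+\varkappa,\ell}$, so $\widehat{\gamma}^{\ell}_{\boldsymbol{X}}$ is orientable of rank $\ell$, and since $P$ is oriented the normal bundle $\nu_{P}$ is orientable of rank $\varkappa$. The Thom isomorphism with $\mathbb{Q}$ coefficients applied to each factor, combined with the Künneth formula for the smash product and for the Cartesian product, gives
\[
\widetilde{H}_{n+\ell+\varkappa}\!\left(T(\widehat{\gamma}^{\ell}_{\boldsymbol{X}})\wedge T(\nu_{P});\mathbb{Q}\right)\;\approx\;\bigoplus_{a+b=n}H_{a}(\boldsymbol{X};\mathbb{Q})\otimes H_{b}(P;\mathbb{Q})\;\approx\;H_{n}(\boldsymbol{X}\times P;\mathbb{Q}),
\]
which is the desired isomorphism. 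The only genuinely substantive step is the rational Hurewicz identification; the rest is bookkeeping with Thom and Künneth, and the main point to verify is that the orientation hypothesis on $P$ together with the passage to the oriented Grassmannian in the oriented case guarantees that the rational Thom isomorphism applies to both factors simultaneously.
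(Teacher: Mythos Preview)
The paper states Corollary 9.3 without proof, so there is nothing to compare against directly; your proposal supplies exactly the argument one would expect. The chain you outline---Corollary 9.2 or (9.1), then Proposition 7.3, then Proposition 8.2 to reach the stable homotopy of $T(\widehat{\gamma}^{\ell}_{\boldsymbol{X}})\wedge T(\nu_P)$, followed by rational Hurewicz in the stable range and Thom plus K\"unneth---is correct and is the natural proof. Two minor remarks: first, since you are already in the stable range (the Thom spaces are $(\ell+\varkappa-1)$-connected and you look at degree $n+\ell+\varkappa$ with $\ell\gg n$), the ordinary rational Hurewicz theorem for simply connected spaces suffices and there is no need to invoke Serre classes; second, recall the paper's convention that $G_m$ stands for $\widetilde{G}_{m,\ell}$ in the oriented case, so the orientability of $\widehat{\gamma}^{\ell}_{\boldsymbol{X}}$ that you need for the rational Thom isomorphism is indeed available.
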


Here, let us see a relationship between the space $B_{\mathfrak{O}}$, the
Thom-Atiyah duality in bordism and cobordism and the spaces introduced by
Kazarian(\cite{Kaz1} and \cite{Kaz2}). Let us recall the $n$-dimensional
oriented bordism group\ $\Omega_{n}(P)$ of maps to $P$\ and the Thom-Atiyah
duality, $\pi_{n+\ell}(MSO(\ell)\wedge P^{0})\approx\lbrack P,\Omega^{\ell
}MSO(\ell+p-n)]$\ (see \cite{Atiyah} and \cite[Chap. I, 12 ]{CoFl}), where
$MSO$ denotes the Thom space of the universal bundle $\gamma$ over $BSO$ and
$\Omega^{\ell}X$\ denotes the $\ell$-th iterated loop space. We have the
following commutative diagram under the notation in (8.5) ($\ell\gg n,p$),
although we do not give a proof of the commutativity:%
\begin{equation}%
\begin{array}
[c]{ccc}%
\mathfrak{O}(n,P;\Omega) & \longrightarrow & \Omega_{n}(P)\\
\approx\downarrow &  & \\
\mathrm{\operatorname{Im}}\mathfrak{^{\mathfrak{O}}}\left(  T(\mathbf{b}%
(\widehat{\gamma})^{(\Omega,\Omega_{\mathcal{K}})})\right)  (\subset
\pi_{n+\ell}\left(  T(\widehat{\gamma}_{\boldsymbol{\Omega}_{\mathcal{K}}^{1}%
}^{\ell})\right)  ) &  & \downarrow\approx\\
\approx\downarrow &  & \\
\pi_{n+\ell+p+\varkappa}(T(\widehat{\gamma}_{\boldsymbol{\Omega}_{\mathcal{K}%
}^{2}}^{\ell})\wedge S^{p+\varkappa}(P^{0})) & \longrightarrow & \pi
_{n+\ell+p+\varkappa}(MSO(\ell)\wedge S^{p+\varkappa}(P^{0}))\\
\approx\downarrow &  & \downarrow\approx\\
\lbrack P,B_{\mathfrak{O}}] & \longrightarrow & [P,\Omega^{\ell+n-p}%
MSO(\ell)],
\end{array}
\end{equation}
where

(1) $\boldsymbol{\Omega}_{\mathcal{K}}^{1}=\Omega_{\mathcal{K}}(\gamma
_{\widetilde{G}_{n+1,\ell}}^{n+1},TP\oplus\varepsilon_{P}^{1})$ and
$\boldsymbol{\Omega}_{\mathcal{K}}^{2}=\Omega_{\mathcal{K}}(\gamma
_{\widetilde{G}_{n+\theta+\varkappa,\ell}}^{n+\theta+\varkappa},\mathbb{R}%
^{p+\theta+\varkappa}),$

(2) the second and the bottom horizontal homomorphisms are induced from the
canonical projections%
\[
\Omega_{\mathcal{K}}(\gamma_{\widetilde{G}_{n+\theta+\varkappa,\ell}%
}^{n+\theta+\varkappa},\mathbb{R}^{p+\theta+\varkappa})\subset J^{k}%
(\gamma_{\widetilde{G}_{n+\theta+\varkappa,\ell}}^{n+\theta+\varkappa
},\mathbb{R}^{p+\theta+\varkappa})\longrightarrow\widetilde{G}_{n+\theta
+\varkappa,\ell}\subset BSO(\ell),
\]

(3) the left vertical map is an isomorphism by Theorem 1.2, (9.1),
Propositions 8.2 and 8.3 for $n<p$ and $n\geqq p\geqq2$.

According to \cite{Kaz1} and \cite[2.18 Corollary and 2.8 Example]{Kaz2}, let
us consider the subspace $\Omega^{\ell+n-p}MSO(\ell)_{\Omega(n,p)}$ in
$\Omega^{\ell+n-p}MSO(\ell)$ which consists of all maps $a:S^{\ell
+n-p}\rightarrow MSO(\ell)$ such that $a$ is smooth around $a^{-1}(BSO(\ell))$
and that for any point $x\in a^{-1}(BSO(\ell))$ with $a(x)=y$, the $k$-jet of
the composite of $a:(S^{\ell+n-p},x)\rightarrow(MSO(\ell),y)$ and a projection
germ of $(MSO(\ell),y)$ to the fiber $(\gamma_{y},y)$ lies in $\Omega
_{\mathcal{K}}(S^{\ell+n-p},\gamma_{y})$ associated to $\Omega(n,p)$. Then the
cobordism class represented by an $\Omega$-regular map is mapped to the
homotopy class represented by a map $P\rightarrow\Omega^{\ell+n-p}%
MSO(\ell)_{\Omega(n,p)}$ in (9.3), whose corresponding map $S^{\ell+n-p}\times
P\rightarrow MSO(\ell)$ is transverse to $BSO(\ell)$.

\section{Fold-maps}

Let $m\geqq q$. Let $V_{m,q}^{\operatorname{row}}$ denote the Stiefel manifold
$(E_{q}\times O(m-q))\diagdown O(m)$ under the canonical bases of
$\mathbb{R}^{m}$ and $\mathbb{R}^{q}$, whose element is regarded as an
epimorphism $\mathbb{R}^{m}\rightarrow\mathbb{R}^{q}$ or a regular $q\times
m$-matrix in the following. Let $\mathcal{E}\rightarrow X$ and $\mathcal{F}%
\rightarrow Y$ be vector bundles of dimensions $m$ and $q$ with metrics
respectively. Let $V(\mathcal{E},\mathcal{F})$ denote the subbundle of
Hom$(\mathcal{E},\mathcal{F})$ associated to $V_{m,q}^{\operatorname{row}}$.
Let $\pi_{X}^{V}:V(\mathcal{E},\mathcal{F})\rightarrow X$ and $\pi_{Y}%
^{V}:V(\mathcal{E},\mathcal{F})\rightarrow Y$ be the canonical projections respectively.

We have the actions of $O(q)\times O(m)$ on $V_{m+1,q}^{\operatorname{row}}$
from the left-hand side through $O(q)$ and from the right-hand side through
$O(m)\times1$ respectively. The group $O(q)\times O(m)$ also naturally acts on
$\Omega^{m-q+1,0}(m,q)$.\ In \cite[Theorem 2.6]{FoldRIMS} we have described
the homotopy type of $\Omega^{m-q+1,0}(m,q)$ in terms of orthogonal groups and
Stiefel manifolds, and have given a topological embedding
\[
i_{V,\Omega}=i_{V,\Omega}^{m,q}:V_{m+1,q}^{\operatorname{row}}%
\mathcal{\rightarrow}\Omega^{m-q+1,0}(m,q),
\]
which is equivariant with respect to the actions of $O(q)\times O(m)$.
Furthermore, if $m-q+1$ is odd, then there exists an equivariant map
\[
R_{\Omega,V}=R_{\Omega,V}^{m,q}:\Omega^{m-q+1,0}(m,q)\mathcal{\rightarrow
}V_{m+1,q}^{\operatorname{row}}%
\]
such that $R_{\Omega,V}\circ i_{V,\Omega}$ is the identity of $V_{m+1,q}%
^{\operatorname{row}}$. In particular, we note that if $m=q$, then
$i_{V,\Omega}\circ R_{\Omega,V}$ is a deformation retraction of $\Omega
^{m-q+1,0}(m,q)$.

If $i_{+1}(j_{0}^{2}f)$ were defined by $i_{+1}(j_{0}^{2}f)=j_{0}%
^{2}(id_{\mathbb{R}}\times f)$, then the following technical modification of
$i_{V,\Omega}$ and $R_{\Omega,V}$\ followed by Lemma 10.1 is unnecessary.\ Let
$h_{t}:\mathbb{R}^{t}\rightarrow\mathbb{R}^{t}$ be the map reversing the order
of coordinates as $h_{t}(x_{1},x_{2},\cdots,x_{t-1},x_{t})=(x_{t}%
,x_{t-1},\cdots,x_{2},x_{t-1})$. Define $\mathfrak{i}_{V,\Omega}%
=\mathfrak{i}_{V,\Omega}^{m,q}:V_{m+1,q}^{\operatorname{row}}%
\mathcal{\rightarrow}\Omega^{m-q+1,0}(m,q)$ and $\mathfrak{R}_{\Omega
,V}=\mathfrak{R}_{\Omega,V}^{m,q}:\Omega^{m-q+1,0}(m,q)\mathcal{\rightarrow
}V_{m+1,q}^{\operatorname{row}}$ by%
\[
\mathfrak{i}_{V,\Omega}^{m,q}(A)=i_{V,\Omega}^{m,q}(Ah_{m+1})\cdot h_{m}\text{
\ \ and \ \ }\mathfrak{R}_{\Omega,V}^{m,q}(j_{0}^{2}f)=R_{\Omega,V}%
^{m,q}(j_{0}^{2}(f\circ h_{m}))h_{m+1}.
\]
It will be easy to see $\mathfrak{R}_{\Omega,V}^{m,q}\circ\mathfrak{i}%
_{V,\Omega}^{m,q}=id_{V_{n+1,p}^{\operatorname{row}}}$. Let $i^{+1}%
:J^{k}(n,p)\rightarrow J^{k}(n+1,p+1)$ denote the map defined by $i^{+1}%
(j_{0}^{k}f)=j_{0}^{k}(id_{\mathbb{R}}\times f)$. Let $j^{+1},$ $j_{+1}%
:V_{n+1,p}^{\operatorname{row}}\rightarrow V_{n+2,p+1}^{\operatorname{row}}$
denote the map defined by $j^{+1}(A)=(1)\dotplus A$ and $j_{+1}(A)=A\dotplus
(1)$, where $\dotplus$ denotes the direct sum of matrices. We consider the
action of $O(p)\times O(n)$ on $V_{n+1,p}^{\operatorname{row}}$\ by
$(S,T)\cdot A=SA((1)\dotplus T^{-1})$ for $S\in O(p)$ and $T\in O(n)$.

\begin{lemma}
Under the above notation, we have that $\mathfrak{i}_{V,\Omega}^{n,p}$ and
$\mathfrak{R}_{\Omega,V}^{n,p}$\ are equivariant with respect to the actions
of $O(p)\times O(n)$ and that
\[
\mathfrak{R}_{\Omega,V}^{n+1,p+1}(i_{+1}(j_{0}^{2}(f)))=j_{+1}(\mathfrak{R}%
_{\Omega,V}^{n,p}(j_{0}^{2}f))\text{ and \ }\mathfrak{i}_{V,\Omega}%
^{n+1,p+1}(j_{+1}(A))=i_{+1}(\mathfrak{i}_{V,\Omega}^{n,p}(A)).
\]

\end{lemma}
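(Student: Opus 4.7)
The plan is to derive all four assertions from two properties of the unmodified maps $i_{V,\Omega}^{m,q}$ and $R_{\Omega,V}^{m,q}$ that come from the explicit construction in \cite[Theorem~2.6]{FoldRIMS}: the $O(q)\times O(m)$-equivariance (with $O(m)\times\{1\}$ acting on the right, so that the trivial summand sits in the \emph{prepended} slot of the Stiefel matrix), and the prepend-stabilization identities $i_{V,\Omega}^{m+1,q+1}\circ j^{+1}=i^{+1}\circ i_{V,\Omega}^{m,q}$ and $R_{\Omega,V}^{m+1,q+1}\circ i^{+1}=j^{+1}\circ R_{\Omega,V}^{m,q}$, which follow immediately from the Stiefel-vector formulas (prepending $(1)$ in the Stiefel matrix corresponds to prepending a trivial coordinate to $f$).

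The whole argument rests on two elementary matrix identities, both verified by evaluation on the standard basis. The first is
\[
\bigl((1)\dotplus T^{-1}\bigr)h_{n+1}=h_{n+1}\bigl((h_n T^{-1}h_n)\dotplus(1)\bigr)\qquad(T\in O(n)),
\]
which transports the modified action $(S,T)\cdot A=SA((1)\dotplus T^{-1})$, after right-multiplication by $h_{n+1}$, into the original right action of $(S,h_n T h_n)$ on $V_{n+1,p}^{\operatorname{row}}$. The second is
\[
(A\dotplus(1))h_{n+2}=\sigma\cdot\bigl((1)\dotplus Ah_{n+1}\bigr),
\]
where $\sigma\in O(p+1)$ is the cyclic shift $(y_1,\dots,y_{p+1})\mapsto(y_2,\dots,y_{p+1},y_1)$; it expresses the ``appended-and-reversed'' Stiefel matrix as a left $\sigma$-twist of the ``prepended'' one.

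For the equivariance of $\mathfrak{i}_{V,\Omega}^{n,p}$ I will expand $\mathfrak{i}_{V,\Omega}^{n,p}((S,T)\cdot A)$, push $h_{n+1}$ through via the first identity, apply the original equivariance of $i_{V,\Omega}^{n,p}$ with group element $(S,h_n T h_n)$, and finally post-compose with $h_n$; since $h_n^2=\mathrm{id}$, the conjugation twist $h_n T^{-1}h_n$ collapses back to $T^{-1}$ and the result equals $(S,T)\cdot\mathfrak{i}_{V,\Omega}^{n,p}(A)$ under the standard jet action. For the stabilization identity, I will apply the second identity together with $O(p+1)$-equivariance and the prepend-compatibility to reduce $i_{V,\Omega}^{n+1,p+1}((A\dotplus(1))h_{n+2})$ to $j_0^k(\sigma\circ(\mathrm{id}_\mathbb{R}\times g))$ with $g:=i_{V,\Omega}^{n,p}(Ah_{n+1})^{\sharp}$. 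A direct coordinate computation gives $\sigma\circ(\mathrm{id}_\mathbb{R}\times g)=(g\times\mathrm{id}_\mathbb{R})\circ\tau$ where $\tau\colon\mathbb{R}^{n+1}\to\mathbb{R}^{n+1}$ is the source cyclic shift sending the first coordinate to the last, and $\tau\circ h_{n+1}=h_n\dotplus(1)$; composing with $h_{n+1}$ as required by the outer definition of $\mathfrak{i}$ then collapses the right side to $(g\circ h_n)\times\mathrm{id}_\mathbb{R}$, which is exactly $i_{+1}(\mathfrak{i}_{V,\Omega}^{n,p}(A))^{\sharp}$. The two assertions for $\mathfrak{R}_{\Omega,V}$ follow from the same two chains of identities read in reverse, using also $\mathfrak{R}_{\Omega,V}\circ\mathfrak{i}_{V,\Omega}=\mathrm{id}$ on the Stiefel manifold.

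The only real obstacle is bookkeeping: one has to track the many occurrences of $h_n$, $h_{n+1}$, $h_{n+2}$, the conjugation twist $T\mapsto h_n T h_n$, and the cyclic shift $\sigma$ carefully enough to see that they really combine to convert the prepended-trivial convention of \cite{FoldRIMS} (on which the construction of $i_{V,\Omega}$ is built) to the appended-trivial convention used in $i_{+1}$. No deeper geometric input is needed beyond the two properties of $i_{V,\Omega}$ and $R_{\Omega,V}$ already established in \cite{FoldRIMS}.
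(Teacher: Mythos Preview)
Your approach is correct and is essentially the paper's own: both arguments are direct computations that combine the $O(q)\times O(m)$-equivariance and the prepend-stabilization identities $i_{V,\Omega}^{n+1,p+1}\circ j^{+1}=i^{+1}\circ i_{V,\Omega}^{n,p}$, $R_{\Omega,V}^{n+1,p+1}\circ i^{+1}=j^{+1}\circ R_{\Omega,V}^{n,p}$ from \cite{FoldRIMS} with elementary permutation-matrix identities converting the prepended-trivial convention into the appended one. The only cosmetic difference is that where you use the cyclic shift $\sigma\in O(p+1)$ and the identity $(A\dotplus(1))h_{n+2}=\sigma\cdot((1)\dotplus Ah_{n+1})$, the paper inserts $h_{p+1}h_{p+1}$ and uses instead $h_{p+1}\circ(f\times id_{\mathbb{R}})\circ h_{n+1}=id_{\mathbb{R}}\times(h_p\circ f\circ h_n)$ together with $h_{p+1}((1)\dotplus M)h_{n+2}=(h_p M h_{n+1})\dotplus(1)$; these play interchangeable roles.

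One caution about your last sentence. Saying that the two $\mathfrak{R}_{\Omega,V}$ assertions ``follow from the same two chains of identities read in reverse, using also $\mathfrak{R}_{\Omega,V}\circ\mathfrak{i}_{V,\Omega}=\mathrm{id}$'' is not quite right if you mean to deduce them formally from the $\mathfrak{i}_{V,\Omega}$ identities via the retraction: since $\mathfrak{i}_{V,\Omega}$ is only a topological embedding and not surjective, the relation $\mathfrak{R}\circ\mathfrak{i}=\mathrm{id}$ only pins down $\mathfrak{R}^{n+1,p+1}\circ i_{+1}$ on the image of $\mathfrak{i}^{n,p}$, not on all of $\Omega^{n-p+1,0}(n,p)$. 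You must instead run the parallel direct computation for $\mathfrak{R}$ (starting from its definition $\mathfrak{R}_{\Omega,V}^{m,q}(j_0^2 f)=R_{\Omega,V}^{m,q}(j_0^2(f\circ h_m))h_{m+1}$ and using the equivariance and prepend-stabilization of $R_{\Omega,V}$), exactly as the paper does; this is equally routine and uses the very same matrix identities.
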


\begin{proof}
For $A\in V_{n+1,p}^{\operatorname{row}}$, $S\in O(p)$ and $T\in O(n)$, we
have that%
\begin{align*}
\mathfrak{i}_{V,\Omega}^{n,p}((S,T^{-1})\cdot A)  &  =\mathfrak{i}_{V,\Omega
}^{n,p}(SA((1)\dotplus T)))\\
&  =i_{V,\Omega}^{n,p}(SA((1)\dotplus T)h_{n+1})\cdot h_{n}\\
&  =S\cdot i_{V,\Omega}^{n,p}(Ah_{n+1}h_{n+1}((1)\dotplus T)h_{n+1})\cdot
h_{n}\\
&  =S\cdot i_{V,\Omega}^{n,p}(Ah_{n+1}(h_{n}Th_{n}\dotplus(1)))\cdot h_{n}\\
&  =S\cdot i_{V,\Omega}^{n,p}(Ah_{n+1})\cdot(h_{n}Th_{n}h_{n})\\
&  =S\cdot(i_{V,\Omega}^{n,p}(Ah_{n+1})\cdot h_{n})\cdot T\\
&  =(S,T^{-1})\cdot\mathfrak{i}_{V,\Omega}^{n,p}(A),
\end{align*}%
\begin{align*}
\mathfrak{R}_{\Omega,V}^{n,p}(S,T^{-1})\cdot(j_{0}^{2}f))  &  =\mathfrak{R}%
_{\Omega,V}^{n,p}(S\cdot(j_{0}^{2}f)\cdot T)\\
&  =R_{\Omega,V}^{n,p}(S\cdot(j_{0}^{2}(f)\cdot T)\cdot h_{n}))h_{n+1}\\
&  =SR_{\Omega,V}^{n,p}(j_{0}^{2}(f)\cdot(h_{n}h_{n})T\cdot h_{n})h_{n+1}\\
&  =SR_{\Omega,V}^{n,p}(j_{0}^{2}(f)\cdot h_{n}\cdot(h_{n}Th_{n}))h_{n+1}\\
&  =SR_{\Omega,V}^{n,p}(j_{0}^{2}(f\circ h_{n}))(h_{n}Th_{n}\dotplus
(1))h_{n+1}\\
&  =SR_{\Omega,V}^{n,p}(j_{0}^{2}(f\circ h_{n}))h_{n+1}h_{n+1}(h_{n}%
Th_{n}\dotplus(1))h_{n+1}\\
&  =SR_{\Omega,V}^{n,p}(j_{0}^{2}(f\circ h_{n}))h_{n+1}((1)\dotplus T)\\
&  =(S,T^{-1})\cdot\mathfrak{R}_{\Omega,V}^{n,p}(j_{0}^{2}f).
\end{align*}

By the definition of $i_{V,\Omega}$ and $R_{\Omega,V}$ we have that%
\[
R_{\Omega,V}^{n+1,p+1}(j_{0}^{2}(id_{\mathbb{R}}\times f))=j^{+1}(R_{\Omega
,V}^{n,p}(j_{0}^{2}f))\text{ and }i_{V,\Omega}^{n+1,p+1}(j^{+1}(A))=i^{+1}%
(i_{V,\Omega}^{n,p}(A)).
\]
Therefore, we have that%
\begin{align*}
\mathfrak{R}_{\Omega,V}^{n+1,p+1}(i_{+1}(j_{0}^{2}(f))  &  =R_{\Omega
,V}^{n+1,p+1}(h_{p+1}h_{p+1}\cdot j_{0}^{2}((f\times id_{\mathbb{R}})\cdot
h_{n+1}))h_{n+2}\\
&  =R_{\Omega,V}^{n+1,p+1}(h_{p+1}\cdot j_{0}^{2}(id_{\mathbb{R}}\times
(h_{p}\circ f\circ h_{n})))h_{n+2}\\
&  =h_{p+1}j^{+1}R_{\Omega,V}^{n,p}(j_{0}^{2}(h_{p}\circ f\circ h_{n}%
))h_{n+2}\\
&  =(h_{p}R_{\Omega,V}^{n,p}(j_{0}^{2}(h_{p}\circ f\circ h_{n}))h_{n+1}%
)\dotplus(1)\\
&  =(R_{\Omega,V}^{n,p}(j_{0}^{2}(f\circ h_{n}))h_{n+1})\dotplus(1)\\
&  =j_{+1}(\mathfrak{R}_{\Omega,V}^{n,p}(j_{0}^{2}(f))
\end{align*}
and other formula is similarly proved.
\end{proof}

If we provide $\gamma_{G_{m}}^{m}$ and $\mathcal{F}$ with metrics, then the
structure group of $J^{2}(\gamma_{G_{m}}^{m}\oplus\varepsilon_{G_{m}}%
^{1},\mathcal{F})$ is reduced to $O(q)\times O(m)$. Let $\boldsymbol{\Omega
}=\Omega^{m-q+1,0}(\gamma_{G_{m}}^{m},\mathcal{F})$. Let $\mathfrak{i}%
_{V\mathbf{,}\Omega}(\gamma_{G_{m}}^{m},\mathcal{F}):V(\gamma_{G_{m}}%
^{m}\oplus\varepsilon_{G_{m}}^{1},\mathcal{F})\rightarrow\boldsymbol{\Omega}$
and $\mathfrak{R}_{\Omega,V}(\gamma_{G_{m}}^{m},\mathcal{F}%
):\boldsymbol{\Omega}\rightarrow V(\gamma_{G_{m}}^{m}\oplus\varepsilon_{G_{m}%
}^{1},\mathcal{F})$ be the fiber maps associated to $\mathfrak{i}_{V,\Omega}$
and $\mathfrak{R}_{\Omega,V}$ respectively. Then

$(10$-$\mathrm{i})$ the fiber map $\mathfrak{i}_{V\mathbf{,}\Omega}%
(\gamma_{G_{m}}^{m},\mathcal{F})$ is a topological embedding,

$(10$-$\mathrm{ii})$ if $m-q+1$ is odd, then the composition $\mathfrak{R}%
_{\Omega\mathbf{,}V}(\gamma_{G_{m}}^{m},\mathcal{F})\circ\mathfrak{i}%
_{V\mathbf{,}\Omega}(\gamma_{G_{m}}^{m},\mathcal{F})$ is the identity of
$V(\gamma_{G_{m}}^{m}\oplus\varepsilon_{G_{m}}^{1},\mathcal{F})$.

\noindent Let $\widehat{\gamma}_{V}^{\ell}$ denote the vector bundle induced
from $\widehat{\gamma}_{G_{m}}^{\ell}$ over $V(\gamma_{G_{m}}^{m}%
\oplus\varepsilon_{G_{m}}^{1},\mathcal{F})$. We note $\widehat{\gamma
}_{\mathbf{\Omega}}^{\ell}=(\mathfrak{R}_{\Omega,V}(\gamma_{G_{m}}%
^{m},\mathcal{F}))^{\ast}\widehat{\gamma}_{V}^{\ell}$. Then we have the bundle
maps $\mathbf{b}_{\mathfrak{R}}:\widehat{\gamma}_{\boldsymbol{\Omega}}^{\ell
}\rightarrow\widehat{\gamma}_{V}^{\ell}$ and $\mathbf{b}_{\mathfrak{i}%
}:\widehat{\gamma}_{V}^{\ell}\rightarrow\widehat{\gamma}_{\boldsymbol{\Omega}%
}^{\ell}$ covering $\mathfrak{R}_{\Omega\mathbf{,}V}(\gamma_{G_{m}}%
^{m},\mathcal{F})$ and $\mathfrak{i}_{V\mathbf{,}\Omega}(\gamma_{G_{m}}%
^{m},\mathcal{F})$ respectively. Their associated maps $T(\mathbf{b}%
_{\mathfrak{R}})$\ and $T(\mathbf{b}_{\mathfrak{i}})$ between the Thom spaces
satisfies that $T(\mathbf{b}_{\mathfrak{R}})\circ T(\mathbf{b}_{\mathfrak{i}%
})$ is equal to the identity of $T(\widehat{\gamma}_{V}^{\ell})$.

Let $v^{\Delta}:V_{n+1,p}^{\operatorname{row}}\rightarrow V_{n+2,p+1}%
^{\operatorname{row}}$ denote the map sending an epimorphism $A\in
V_{n+1,p}^{\operatorname{row}}$\ to the direct sum $A\dotplus(1)$ of matrices
$A$ and $(1)$. Set%
\[%
\begin{array}
[c]{cc}%
V_{n+1}=V(\gamma_{G_{n}}^{n}\oplus\varepsilon_{G_{n}}^{1},TP), &
V_{n+2}=V(\gamma_{G_{n+1}}^{n+1}\oplus\varepsilon_{G_{n+1}}^{1},TP\oplus
\varepsilon_{P}^{1}),\\
\boldsymbol{\Omega}=\Omega^{n-p+1,0}(\gamma_{G_{n}}^{n},TP), &
\boldsymbol{\Omega}_{\mathcal{K}}=\Omega^{n-p+1,0}(\gamma_{G_{n+1}}%
^{n+1},TP\oplus\varepsilon_{P}^{1}).
\end{array}
\]
Let $i^{G}$ denote the canonical classifying map $G_{n}\rightarrow G_{n+1}$ of
$\gamma_{G_{n}}^{n}\oplus\varepsilon_{G_{n}}^{1}$.\ Let $V^{\Delta}%
:V_{n+1}\rightarrow V_{n+2}$\ denote the fiber map associated to $v^{\Delta}$
covering $(i^{G},\pi_{P})$. Let $\Omega=\Omega^{n-p+1,0}$. If $n-p+1$ is odd,
then we have the commutative diagram%
\[
\begin{CD}
\Omega^{n-p+1,0}(\gamma_{G_{n}}^{n},TP) @>\Delta^{(\Omega,\Omega_{\mathcal{K}})}>>
\Omega^{n-p+1,0}(\gamma_{G_{n+1}}^{n+1},TP\oplus\varepsilon_{P}^{1})\\
@V\mathfrak{R}_{\Omega,V}^{n,p}(\gamma_{{G}_{n}}^{{n}},TP)VV
@VV\mathfrak{R}_{\Omega,V}^{n+1,p+1}(\gamma_{G_{n+1}}^{n+1},TP\oplus\varepsilon_{P}^{1})V\\
V_{n+1}@>{V^{\Delta}}>>V_{n+2}\\
\end{CD}
\]
by Lemma 10.1, where the horizontal fiber maps cover $(i^{G},\pi_{P})$.

\begin{lemma}
Let $n-p+1$\ be odd. The fiber map $V^{\Delta}$ is a homotopy $n$-equivalence.

\begin{proof}
Since $v^{\Delta}$ and $(i^{G},\pi_{P})$\ are homotopy $n$-equivalences, it
follows that $V^{\Delta}$ is a homotopy $n$-equivalence.
\end{proof}
\end{lemma}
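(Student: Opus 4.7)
The plan is to apply the five lemma to the ladder of long exact homotopy sequences of the two Stiefel-type fiber bundles underlying $V_{n+1}$ and $V_{n+2}$, after separately establishing that the base map $(i^G, \pi_P)$ and the fiber map $v^{\Delta}$ of $V^{\Delta}$ are each homotopy $n$-equivalences. Observe first that $V_{n+1}$ is a fiber bundle over $G_n \times P$ with fiber $V_{n+1,p}^{\operatorname{row}}$, and $V_{n+2}$ is a fiber bundle over $G_{n+1} \times P$ with fiber $V_{n+2,p+1}^{\operatorname{row}}$; by construction, $V^{\Delta}$ covers $(i^G,\pi_P)$ on the base and restricts on each fiber to $v^{\Delta}: A \mapsto A \dotplus (1)$.

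For the base, Lemma 7.1 with $m=n$, $q=\ell$, $r=1$ shows that $i^{G}: G_{n}\to G_{n+1}$ is a homotopy $n$-equivalence, and taking the product with $\mathrm{id}_{P}$ preserves this property, so $(i^{G},\pi_{P})$ is a homotopy $n$-equivalence. For the fiber map, I would identify $V_{n+1,p}^{\operatorname{row}} = O(n+1)/O(n+1-p)$ and $V_{n+2,p+1}^{\operatorname{row}} = O(n+2)/O(n+1-p)$, the subgroup being block-diagonally embedded in the lower-right in each case. Under these identifications, $v^{\Delta}$ is induced by the standard stabilization $O(n+1)\hookrightarrow O(n+2)$, $M\mapsto M\dotplus (1)$, which sits in the fibration $O(n+1)\to O(n+2)\to S^{n+1}$ and is therefore a homotopy $n$-equivalence since $S^{n+1}$ is $n$-connected. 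Applying the five lemma to the commutative ladder of the homotopy long exact sequences of $O(n+1-p)\to O(n+1)\to V_{n+1,p}^{\operatorname{row}}$ and $O(n+1-p)\to O(n+2)\to V_{n+2,p+1}^{\operatorname{row}}$ (with the identity on fibers and the stabilization in the middle) then yields that $v^{\Delta}$ is a homotopy $n$-equivalence.

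Finally, I would apply the five lemma once more to the commutative ladder of the long exact homotopy sequences of the two fiber bundles $V_{n+1,p}^{\operatorname{row}}\to V_{n+1}\to G_{n}\times P$ and $V_{n+2,p+1}^{\operatorname{row}}\to V_{n+2}\to G_{n+1}\times P$, with vertical maps $v^{\Delta}_{\ast}$, $V^{\Delta}_{\ast}$, and $(i^{G},\pi_{P})_{\ast}$. Since the fiber and base maps are homotopy $n$-equivalences, this gives that $V^{\Delta}_{\ast}:\pi_{i}(V_{n+1})\to\pi_{i}(V_{n+2})$ is an isomorphism for $0\leqq i<n$ and an epimorphism for $i=n$, which is exactly the claim.

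There is no serious obstacle here; the only mild point requiring care is correctly tracking the mixed iso/epi statements through the five lemma at the top dimension $i=n$, which is handled exactly as in the proof of Lemma 7.2 above (invoking \cite[Lemma 3.2]{CaEilen} at the boundary dimension if needed). Notably, the hypothesis that $n-p+1$ is odd is \emph{not} used for this lemma itself; it enters only through the prior construction of the retraction $\mathfrak{R}_{\Omega,V}$ that makes $V^{\Delta}$ a meaningful object in the preceding diagram.
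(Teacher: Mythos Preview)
Your proposal is correct and follows exactly the route the paper takes: the paper's one-line proof simply asserts that $v^{\Delta}$ and $(i^{G},\pi_{P})$ are homotopy $n$-equivalences and concludes, leaving the five-lemma argument on the ladder of homotopy exact sequences implicit, while you spell out both verifications and the five-lemma step explicitly. One small technical remark: your identification of $v^{\Delta}$ with the map induced by $M\mapsto M\dotplus(1)$ is slightly off, since the subgroup $O(n+1-p)$ sits in the \emph{lower-right} block in both quotients while the added $(1)$ also goes to the lower-right, so the two embeddings of $O(n+1-p)$ do not literally match under $M\mapsto M\dotplus(1)$; the cleanest fix is to observe directly that $v^{\Delta}$ is the fiber inclusion of the fibration $V_{n+2,p+1}^{\operatorname{row}}\to S^{n+1}$ that records the last frame vector, whence it is a homotopy $n$-equivalence because $S^{n+1}$ is $n$-connected. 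Your observation that the parity hypothesis on $n-p+1$ plays no role in this particular lemma is also correct.
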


We have the bundle map $\mathbf{b}_{V}:\widehat{\gamma}_{V_{n+1}}^{\ell
}\rightarrow\widehat{\gamma}_{V_{n+2}}^{\ell}$ covering $V^{\Delta}$\ and its
associated map $T(\mathbf{b}_{\mathbf{V}})$ between the Thom spaces. Recalling
the map $T(\mathbf{b}(\widehat{\gamma})^{(\Omega,\Omega_{\mathcal{K}}%
)}):T(\widehat{\gamma}_{\boldsymbol{\Omega}}^{\ell})\longrightarrow
T(\widehat{\gamma}_{\boldsymbol{\Omega}_{\mathcal{K}}}^{\ell})$ we obtain the
following commutative diagram%
\begin{equation}
\begin{CD} \pi_{n+\ell}\left( T(\widehat{\gamma}_{\boldsymbol{\Omega}}^{\ell})\right) @> T(\mathbf{b}({\widehat{\gamma}})^{(\Omega,\Omega_\mathcal{K})})_{*} >> \pi_{n+\ell}\left( T(\widehat{\gamma}_{\boldsymbol{\Omega}_{\mathcal{K}}}^{\ell})\right) \\ @VVV @VVV\\ \pi_{n+\ell}\left( T(\widehat{\gamma}_{V_{n+1}}^{\ell})\right) @> T(\mathbf{b}_{V})_{\ast}>> \pi_{n+\ell}\left( T(\widehat{\gamma}_{V_{n+2}}^{\ell})\right), \end{CD}
\end{equation}
where the left vertical map and the right vertical maps are $T(\mathbf{b}%
_{\mathfrak{R}})_{\ast}$ for $(n,p)$ and $(n+1,p+1)$ respectively. Let
$G_{n+2+\varkappa}=A_{n+2+\varkappa,\ell}$ as above.\ 

\begin{proposition}
Let $n\geqq p\geqq2$ and $\ell\gg n,p$. Let $n-p+1$ be odd. Let
$\boldsymbol{\Omega}=\Omega^{n-p+1,0}(\gamma_{G_{n}}^{n},TP)$ and
$\boldsymbol{\Omega}_{\mathcal{K}}=\Omega^{n-p+1,0}(\gamma_{G_{n+1}}%
^{n+1},TP\oplus\varepsilon_{P}^{1})$ be as above. Then there exist
homomorphisms%
\begin{align*}
\omega_{V}  &  :\mathfrak{C}(n,P;\Omega^{n-p+1,0})\longrightarrow\pi_{n+\ell
}\left(  T(\widehat{\gamma}_{V_{n+2}}^{\ell})\right)  ,\\
\iota_{V}  &  :\pi_{n+\ell}\left(  T(\widehat{\gamma}_{V_{n+2}}^{\ell
})\right)  \longrightarrow\mathfrak{C}(n,P;\Omega^{n-p+1,0})
\end{align*}
such that $\omega_{V}\circ\iota_{V}$ is the identity of $\pi_{n+\ell}\left(
T(\widehat{\gamma}_{V_{n+2}}^{\ell})\right)  $. In particular, if $n=p$, then
$\omega_{V}$ is an isomorphism.
\end{proposition}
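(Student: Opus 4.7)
The plan is to combine Theorem~1.1 with the retraction/section pair coming from the hypothesis that $n-p+1$ is odd. By Theorem~9.1, $\Omega^{n-p+1,0}(n,p)$ satisfies the h-principle and $\Omega^{n-p+1,0}(n+1,p+1)=\Omega_{\mathcal{K}}(n+1,p+1)$ satisfies the relative h-principle, so Theorem~1.1 yields the isomorphism $\omega:\mathfrak{C}(n,P;\Omega^{n-p+1,0})\longrightarrow\operatorname{Im}(T(\mathbf{b}(\widehat{\gamma})^{(\Omega,\Omega_{\mathcal{K}})})_{\ast})\subset\pi_{n+\ell}(T(\widehat{\gamma}_{\boldsymbol{\Omega}_{\mathcal{K}}}^{\ell}))$. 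I would then simply set $\omega_{V}:=T(\mathbf{b}_{\mathfrak{R}}^{n+1,p+1})_{\ast}\circ\omega$. Both factors are group homomorphisms and land in $\pi_{n+\ell}(T(\widehat{\gamma}_{V_{n+2}}^{\ell}))$.

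For the harder direction, I would first establish that for every $\alpha\in\pi_{n+\ell}(T(\widehat{\gamma}_{V_{n+2}}^{\ell}))$ the class $T(\mathbf{b}_{\mathfrak{i}}^{n+1,p+1})_{\ast}(\alpha)$ lies in $\operatorname{Im}(T(\mathbf{b}(\widehat{\gamma})^{(\Omega,\Omega_{\mathcal{K}})})_{\ast})$. The second identity in Lemma~10.1, $\mathfrak{i}_{V,\Omega}^{n+1,p+1}(j_{+1}(A))=i_{+1}(\mathfrak{i}_{V,\Omega}^{n,p}(A))$, yields a commutative square analogous to the one preceding Lemma~10.2 but with $\mathfrak{i}$ in place of $\mathfrak{R}$, giving
\[
T(\mathbf{b}(\widehat{\gamma})^{(\Omega,\Omega_{\mathcal{K}})})_{\ast}\circ T(\mathbf{b}_{\mathfrak{i}}^{n,p})_{\ast}=T(\mathbf{b}_{\mathfrak{i}}^{n+1,p+1})_{\ast}\circ T(\mathbf{b}_{V})_{\ast}.
\]
By Lemma~10.2, $V^{\Delta}$ is a homotopy $n$-equivalence. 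Repeating verbatim the Thom isomorphism / Serre mod-$\mathcal{C}$ Whitehead / order-two cobordism argument used in the proof of Theorem~1.2 and Proposition~7.3 (replacing $\boldsymbol{\Omega},\boldsymbol{\Omega}_{\mathcal{K}}^{1}$ by $V_{n+1},V_{n+2}$) promotes this to the surjectivity of $T(\mathbf{b}_{V})_{\ast}$ on $\pi_{n+\ell}$. Choosing $\gamma$ with $T(\mathbf{b}_{V})_{\ast}(\gamma)=\alpha$, the displayed commutativity exhibits $T(\mathbf{b}_{\mathfrak{i}}^{n+1,p+1})_{\ast}(\alpha)$ as $T(\mathbf{b}(\widehat{\gamma})^{(\Omega,\Omega_{\mathcal{K}})})_{\ast}(T(\mathbf{b}_{\mathfrak{i}}^{n,p})_{\ast}(\gamma))$, so it belongs to the image. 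I can therefore define $\iota_{V}(\alpha):=\omega^{-1}(T(\mathbf{b}_{\mathfrak{i}}^{n+1,p+1})_{\ast}(\alpha))$, which is well defined and a homomorphism.

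The composition identity is then formal: $\omega_{V}(\iota_{V}(\alpha))=T(\mathbf{b}_{\mathfrak{R}}^{n+1,p+1})_{\ast}(T(\mathbf{b}_{\mathfrak{i}}^{n+1,p+1})_{\ast}(\alpha))=\alpha$, since $T(\mathbf{b}_{\mathfrak{R}})\circ T(\mathbf{b}_{\mathfrak{i}})$ is the identity of $T(\widehat{\gamma}_{V}^{\ell})$ (property of $\mathfrak{R}_{\Omega,V}\circ\mathfrak{i}_{V,\Omega}=\mathrm{id}$ noted in $(10\text{-}\mathrm{ii})$). In the case $n=p$, the paragraph introducing $\mathfrak{i}_{V,\Omega}$ and $\mathfrak{R}_{\Omega,V}$ records that when $m=q$ the map $i_{V,\Omega}\circ R_{\Omega,V}$ is a deformation retraction; applied at the $(n+1,p+1)=(n+1,n+1)$ level, this makes $\mathfrak{R}_{\Omega,V}^{n+1,n+1}$ and $\mathfrak{i}_{V,\Omega}^{n+1,n+1}$ mutual homotopy equivalences. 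Consequently $T(\mathbf{b}_{\mathfrak{R}}^{n+1,n+1})_{\ast}$ is an isomorphism of the full group $\pi_{n+\ell}(T(\widehat{\gamma}_{\boldsymbol{\Omega}_{\mathcal{K}}}^{\ell}))\cong\pi_{n+\ell}(T(\widehat{\gamma}_{V_{n+2}}^{\ell}))$, and the injectivity of $\omega_{V}$ follows from that of $\omega$ together with the injectivity of $T(\mathbf{b}_{\mathfrak{R}}^{n+1,n+1})_{\ast}$ restricted to $\operatorname{Im}(T(\mathbf{b}(\widehat{\gamma})^{(\Omega,\Omega_{\mathcal{K}})})_{\ast})$; combined with the surjectivity already given by $\omega_{V}\circ\iota_{V}=\mathrm{id}$, $\omega_{V}$ is an isomorphism.

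The main obstacle is the second step: upgrading the base-level homotopy $n$-equivalence of Lemma~10.2 to surjectivity of $T(\mathbf{b}_{V})_{\ast}$ on $\pi_{n+\ell}$. The Thom isomorphism gives a homology surjection at the correct degree, but one must kill the resulting odd-torsion cokernel using the bordism interpretation of these stable groups (exponent two in the unoriented case, and the analogous torsion-control argument in the oriented case), exactly as in the proofs of Theorem~1.2 and Proposition~7.3.
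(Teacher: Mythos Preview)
Your proposal is correct and follows essentially the same route as the paper's proof: define $\omega_{V}=T(\mathbf{b}_{\mathfrak{R}}^{n+1,p+1})_{\ast}\circ\omega$ via Theorem~1.1, use Lemma~10.2 together with the Theorem~1.2--style Thom isomorphism/Whitehead argument to make $T(\mathbf{b}_{V})_{\ast}$ surjective, and exploit the retraction identity $(10\text{-}\mathrm{ii})$ to produce the splitting $\iota_{V}$. Your treatment is in fact more explicit than the paper's, since you write out the $\mathfrak{i}$-version of the commutative square (from the second identity of Lemma~10.1) to verify that $T(\mathbf{b}_{\mathfrak{i}}^{n+1,p+1})_{\ast}$ lands in $\operatorname{Im}T(\mathbf{b}(\widehat{\gamma})^{(\Omega,\Omega_{\mathcal{K}})})_{\ast}$, whereas the paper only records diagram~(10.1) with the $\mathfrak{R}$-maps and leaves this step implicit in the split-epimorphism structure. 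One small remark: your closing caveat about the oriented case is overly cautious; there the Thom isomorphism holds over $\mathbb{Z}$, so the ordinary Whitehead theorem suffices and no mod-$\mathcal{C}$ or exponent-two argument is needed.
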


\begin{proof}
(i) In the diagram (10.1) it follows from the similar argument as in Proof of
Theorem 1.2 that $T(\mathbf{b}_{V})_{\ast}$ is an epimorphism. It follows from
$(10$-$\mathrm{i})$ and $(10$-$\mathrm{ii})$ that $T(\mathbf{b}_{\mathfrak{R}%
})_{\ast}$ in the diagram (10.1) for $(n,p)$ and $(n+1,p+1)$ are epimorphisms.
By Theorem 1.1 we obtain the required homomorphisms $\omega_{V}$ and
$\iota_{V}$.
\end{proof}

Here, we prove the following refinement of \cite[Theorem 0.3]{FoldRIMS}. This
theorem should be compared with the results in \cite{KalmarNEW} and
\cite{KalmarNEW2}.

\begin{theorem}
Under the assumption of Proposition 10.3 there exists a splitting epimorphism%
\[
\mathfrak{C}(n,P;\Omega^{n-p+1,0})\rightarrow\underset{\varkappa
\rightarrow\infty}{\lim}\left(  \underset{\ell\rightarrow\infty}{\lim}%
\pi_{n+\ell+\varkappa}\left(  T(\widehat{\gamma}_{G_{n-p+1}}^{\ell})\wedge
T(\nu_{P})\right)  \right)  .
\]
In particular, if $n=p$, then this is an isomorphism.
\end{theorem}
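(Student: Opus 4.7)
The plan is to combine Proposition~10.3 with an $S$-duality-type stabilisation of $V_{n+2}$, mimicking the pattern of Section~8 but with the Stiefel bundle in place of $\Omega_{\mathcal{K}}$. Proposition~10.3 already supplies a splitting epimorphism $\omega_V:\mathfrak{C}(n,P;\Omega^{n-p+1,0})\to \pi_{n+\ell}(T(\widehat{\gamma}_{V_{n+2}}^{\ell}))$ with right inverse $\iota_V$, which is an isomorphism when $n=p$. It is therefore enough to construct a splitting epimorphism (resp.\ isomorphism when $n=p$) from $\pi_{n+\ell}(T(\widehat{\gamma}_{V_{n+2}}^{\ell}))$ to the target group of the theorem; the right inverse of the composite is then $\iota_V$ followed by a section of that second map.

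The identification proceeds in two steps. First, set $V^{\varkappa}:=V(\gamma_{G_{n+\varkappa+1}}^{n+\varkappa+1}\oplus \varepsilon^1,\mathbb{R}^{p+\varkappa+1})$; the assignment $\alpha\mapsto \alpha\oplus id_{\nu_{P,y}}$, together with the trivialisation $TP\oplus \nu_P\cong \varepsilon_P^{p+\varkappa}$, defines a fiber map $V_{n+2}\to V^{\varkappa}$ covering $(c,\pi_P):G_{n+1}\times P\to G_{n+\varkappa+1}$, which is a homotopy $(n+1)$-equivalence by Lemma~7.1. The Thom-isomorphism argument of Proposition~8.2 then yields
\[
\pi_{n+\ell}\bigl(T(\widehat{\gamma}_{V_{n+2}}^{\ell})\bigr)\;\xrightarrow{\sim}\;\lim_{\varkappa\to\infty}\pi_{n+\ell+\varkappa}\bigl(T(\widehat{\gamma}_{V^{\varkappa}}^{\ell})\wedge T(\nu_P)\bigr).
\]
Second, on $V^{\varkappa}$ the tautological rank-$(n-p+1)$ kernel bundle $K$ of the universal epimorphism satisfies
\[
K\oplus \widehat{\gamma}_{V^{\varkappa}}^{\ell}\;\cong\;\varepsilon^{n+\ell-p+1},
\]
because the epimorphism structure gives $K\oplus \varepsilon^{p+\varkappa+1}\cong \gamma^{n+\varkappa+1}\oplus \varepsilon^1$ and the Grassmannian structure gives $\gamma^{n+\varkappa+1}\oplus \widehat{\gamma}^{\ell}\cong \varepsilon^{n+\varkappa+1+\ell}$. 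Consequently the classifying map $k:V^{\varkappa}\to G_{n-p+1,\ell}$ of $K$ satisfies $k^{\ast}\widehat{\gamma}_{G_{n-p+1}}^{\ell}\cong \widehat{\gamma}_{V^{\varkappa}}^{\ell}$, inducing the map $T(\widehat{\gamma}_{V^{\varkappa}}^{\ell})\to T(\widehat{\gamma}_{G_{n-p+1}}^{\ell})$ and in turn the desired homomorphism onto the target group of Theorem~10.4.

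When $n=p$ (so $n-p+1=1$), the induced map on stable homotopy after smashing with $T(\nu_P)$ will be shown to be an isomorphism by a Serre-class argument modulo finite groups of odd order, of the type used in the proofs of Theorem~1.2 and Proposition~7.3; since $\omega_V$ is already an isomorphism in this case, the whole composite is an isomorphism. For general $n\geqq p$ with $n-p+1$ odd, the required right inverse to the $k$-induced map at the spectrum level is to be built from the Stiefel section/retraction pair in odd codimension, that is, from the fibrewise analogues of $\mathfrak{i}_{V,\Omega}$ and $\mathfrak{R}_{\Omega,V}$ (the same odd-parity hypothesis that powers Proposition~10.3), and composing this section with $\iota_V$ delivers the splitting. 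The principal obstacle is precisely this last construction: since the fibre $O(p+\varkappa+1)$ of $V^{\varkappa}\to G_{n-p+1,p+\varkappa+1}$ is not contractible, exhibiting a section on Thom spectra in the stable range will require a careful use of the odd-parity hypothesis to reduce the problem to a situation where the Stiefel retraction $\mathfrak{R}$ from Section~10 can be invoked fibrewise.
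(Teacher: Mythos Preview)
Your first step---passing from $V_{n+2}$ to $V^{\varkappa}\times P$ via the normal bundle of $P$ and the trivialisation $TP\oplus\nu_P\cong\varepsilon^{p+\varkappa}$---is essentially the paper's argument (the paper absorbs the extra $\varepsilon^1$ and works with $V(\gamma_{G_{n+2+\varkappa}}^{n+2+\varkappa},\mathbb{R}^{p+1+\varkappa})$, but this is the same space up to a reindexing). The second step, classifying the kernel bundle, is also the right idea and is exactly what the paper does via the map $\rho\circ\mu$.

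However, you have misjudged the difficulty of the second step. You assert that the fibre of the kernel map is $O(p+\varkappa+1)$ and therefore not contractible, and conclude that a separate splitting must be manufactured from the odd-parity retraction $\mathfrak{R}_{\Omega,V}$. In fact the relevant fibre is the Stiefel manifold $V_{\ell+p+1+\varkappa,\,p+1+\varkappa}=O(\ell+p+1+\varkappa)/O(\ell)$: a point of $V(\gamma_{G_{n+2+\varkappa}}^{n+2+\varkappa},\mathbb{R}^{p+1+\varkappa})$ lying over a fixed $(n{-}p{+}1)$-plane $a\subset\mathbb{R}^{n+\ell+\varkappa+2}$ consists of a $(p{+}1{+}\varkappa)$-plane $b\subset a^{\perp}\cong\mathbb{R}^{\ell+p+1+\varkappa}$ together with a linear isomorphism $b\to\mathbb{R}^{p+1+\varkappa}$. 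Since $\ell\gg n,p,\varkappa$, this fibre is $(\ell-1)$-connected and the kernel map is a homotopy $\ell$-equivalence. After checking that it is covered by a bundle map $\widehat{\gamma}_{V^{\varkappa}}^{\ell}\oplus\varepsilon^{p+1+\varkappa}\to\widehat{\gamma}_{G_{n-p+1,\ell+p+1+\varkappa}}^{\ell+p+1+\varkappa}$ (your isomorphism $K\oplus\widehat{\gamma}^{\ell}\cong\varepsilon^{n-p+1+\ell}$ is only stable as written; the paper's equation~(10.6) is the correct unstable statement), the induced map of Thom spaces gives an isomorphism on $\pi_{n+\ell+\cdots}$ for \emph{all} $n\geqq p$, not only for $n=p$. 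No Serre-class argument and no further use of $\mathfrak{i}_{V,\Omega}$, $\mathfrak{R}_{\Omega,V}$ is needed here; the odd-parity hypothesis is consumed entirely by Proposition~10.3, and everything after $\omega_V,\iota_V$ is a chain of honest isomorphisms. Your ``principal obstacle'' dissolves once you compute the fibre correctly.
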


\begin{proof}
The proof proceeds similarly as in the proof of Propositions 8.2 and 8.3 by
replacing spaces $\Omega_{\mathcal{K}}(\xi,\mathcal{F})$ by $V(\xi
,\mathcal{F})$. We only prove the unoriented case. Define the fiber map%
\begin{equation}
V_{n+2}\longrightarrow V^{\prime}(\gamma_{G_{n+1}}^{n+1}\oplus\varepsilon
_{G_{n+1}}^{1}\oplus\nu_{P},TP\oplus\varepsilon_{P}^{1}\oplus\nu_{P}),
\end{equation}
where the last space is the subbundle of%
\[
\mathrm{Hom}(\pi_{G_{n+1}}^{\ast}(\gamma_{G_{n+1}}^{n+1}\oplus\varepsilon
_{G_{n+1}}^{1})\oplus\pi_{P}^{\ast}(\nu_{P}),TP\oplus\varepsilon_{P}^{1}%
\oplus\nu_{P})
\]
associated to $V_{n+2+\varkappa,p+1+\varkappa}$ over $G_{n+1}\times P$ and the
map sends $A\in(V_{n+2})_{(x,y)}$\ to $A\oplus id_{(\nu_{P})_{y}}$. We have a
bundle map%
\begin{equation}
\pi_{G_{n+1}}^{\ast}(\gamma_{G_{n+1}}^{n+1}\oplus\varepsilon_{G_{n+1}}%
^{1})\oplus\pi_{P}^{\ast}(\nu_{P})\longrightarrow\gamma_{G_{n+2+\varkappa}%
}^{n+2+\varkappa}%
\end{equation}
covering a classifying map $c^{+}:G_{n+1}\times P\rightarrow G_{n+2+\varkappa
}$ (we may need to replace $\ell$ with a bigger integer). We note that
$(c^{+},\pi_{P}):G_{n+1}\times P\rightarrow G_{n+2+\varkappa}\times P$ is a
homotopy $(n+1)$-equivalence. By (10.2) and (10.3) we obtain a homotopy
$(n+1)$-equivalent fiber map
\begin{equation}
V_{n+2}\longrightarrow V(\gamma_{G_{n+2+\varkappa}}^{n+2+\varkappa}%
,TP\oplus\varepsilon_{P}^{1}\oplus\nu_{P})
\end{equation}
covering $(c^{+},\pi_{P}):G_{n+1}\times P\rightarrow G_{n+2+\varkappa}\times
P$. The trivialization $TP\oplus\nu_{P}\rightarrow\varepsilon_{P}%
^{p+\varkappa}$ induces the bundle map%
\begin{equation}
V(\gamma_{G_{n+2+\varkappa}}^{n+2+\varkappa},TP\oplus\varepsilon_{P}^{1}%
\oplus\nu_{P})\longrightarrow V(\gamma_{G_{n+2+\varkappa}}^{n+2+\varkappa
},\mathbb{R}^{p+1+\varkappa})\times P
\end{equation}
over $G_{n+2+\varkappa}\times P$.

We denote, by $V_{n-p+1,p+1+\varkappa,\ell}$, the space which consists of all
triples $(a,b,c)$\ where $a$, $b$ and $c$\ are mutually perpendicular
subspaces in\ $\mathbb{R}^{n+\ell+\varkappa+2}$ of dimensions $n-p+1$,
$p+1+\varkappa$ and $\ell$ respectively with $a\oplus b\oplus c=\mathbb{R}%
^{n+\ell+\varkappa+2}$. Let $\gamma_{V}^{p+1+\varkappa}$\ be the canonical
vector bundle over $V_{n-p+1,p+1+\varkappa,\ell}$ of dimension $p+1+\varkappa$.

We denote an element of $V(\gamma_{G_{n+2+\varkappa}}^{n+2+\varkappa
},\mathbb{R}^{p+1+\varkappa})$ by $(\alpha,h)$, where $\alpha\in
G_{n+2+\varkappa}$ and $h\in V((\gamma_{G_{n+2+\varkappa}}^{n+2+\varkappa
})_{\alpha},\mathbb{R}^{p+1+\varkappa})$, which is regarded as an epimorphism.
Then $(\alpha,h)$\ defines $(\mathrm{Ker}(h),\mathrm{Ker}(h)^{\perp}%
,\alpha^{\perp})$ in $V_{n-p+1,p+1+\varkappa,\ell}$, which is the triple of
the kernel of $h$, the orthogonal complement of $\mathrm{Ker}(h)$ in $\alpha
$\ and the orthogonal complement $\alpha^{\perp}$. Let $V(\gamma
_{V}^{p+1+\varkappa},\mathbb{R}^{p+1+\varkappa})$\ denote the principal bundle
with fiber $O(p+1+\varkappa)$ associated to $\mathrm{Hom}(\gamma
_{V}^{p+1+\varkappa},\mathbb{R}^{p+1+\varkappa})$. We have the canonical
homeomorphism%
\[
V(\gamma_{G_{n+2+\varkappa}}^{n+2+\varkappa},\mathbb{R}^{p+1+\varkappa
})\rightarrow V(\gamma_{V}^{p+1+\varkappa},\mathbb{R}^{p+1+\varkappa})
\]
which maps $(\alpha,h)$ to $h|\mathrm{Ker}(h)^{\perp}:\mathrm{Ker}(h)^{\perp
}\rightarrow\mathbb{R}^{p+1+\varkappa}$ over $(\mathrm{Ker}(h),\mathrm{Ker}%
(h)^{\perp},\alpha^{\perp})$. Let $\mu$ denote the map%
\[
\mu:V(\gamma_{G_{n+2+\varkappa}}^{n+2+\varkappa},\mathbb{R}^{p+1+\varkappa
})=V(\gamma_{V}^{p+1+\varkappa},\mathbb{R}^{p+1+\varkappa})\longrightarrow
V_{n-p+1,p+1+\varkappa,\ell}%
\]
defined by $\mu((\alpha,h))=(\mathrm{Ker}(h),\mathrm{Ker}(h)^{\perp}%
,\alpha^{\perp})$. Let%
\[
\rho:V_{n-p+1,p+1+\varkappa,\ell}\rightarrow G_{n-p+1,\ell+p+1+\varkappa}%
\]
be the map defined by $\rho(a,b,c)=a$. Then $\rho\circ\mu$ gives a fiber
bundle. Since $\rho^{-1}(\mathbb{R}^{n-p+1}\times0)$ is $G_{p+1+\varkappa
,\ell}$ for $0\times\mathbb{R}^{\ell+p+1+\varkappa}$, we have that $(\rho
\circ\mu)^{-1}(\mathbb{R}^{n-p+1}\times0)$ is $V_{\ell+p+1+\varkappa
,p+1+\varkappa}$, namely $O(\ell+p+1+\varkappa)/O(\ell)$. Hence, $\rho\circ
\mu$ is a homotopy $\ell$-equivalence. We note that%
\begin{equation}
(\rho\circ\mu)^{\ast}(\widehat{\gamma}_{G_{n-p+1,\ell+p+1+\varkappa}}%
^{\ell+p+1+\varkappa})\approx\widehat{\gamma}_{V(\gamma_{G_{n+2+\varkappa}%
}^{n+2+\varkappa},\mathbb{R}^{p+1+\varkappa})}^{\ell}\oplus\varepsilon
_{V(\gamma_{G_{n+2+\varkappa}}^{n+2+\varkappa},\mathbb{R}^{p+1+\varkappa}%
)}^{p+1+\varkappa}.
\end{equation}

We have that $\pi_{n+\ell}\left(  T(\widehat{\gamma}_{V_{n+2}}^{\ell})\right)
$ is isomorphic to $\pi_{n+\ell+p+\varkappa}\left(  T(\widehat{\gamma
}_{V_{n+2}}^{\ell}\oplus\varepsilon_{V_{n+2}}^{p+\varkappa})\right)  $. This
is isomorphic to%
\[
\pi_{n+\ell+p+\varkappa}\left(  T((\widehat{\gamma}_{V_{n+2}}^{\ell}\oplus
(\pi_{P}^{V})^{\ast}(TP\oplus\nu_{P}))|_{V(\gamma_{G_{n+2+\varkappa}%
}^{n+2+\varkappa},TP\oplus\varepsilon_{P}^{1}\oplus\nu_{P})})\right)  \text{.}%
\]
Since setting $G=G_{n+2+\varkappa,\ell+p}$, we have the bundle map of%
\[
\widehat{\gamma}_{V_{n+2}}^{\ell}\oplus(\pi_{P}^{V})^{\ast}(TP\oplus\nu
_{P})\longrightarrow(\pi_{G}^{V})^{\ast}(\widehat{\gamma}_{G}^{\ell
+p})|_{V(\gamma_{G}^{n+2+\varkappa},\mathbb{R}^{p+1+\varkappa})}\times\nu_{P}%
\]
covering the canonical homotopy $(n+1+\varkappa)$-equivalent map%
\[
V(\gamma_{G_{n+2+\varkappa}}^{n+2+\varkappa},TP\oplus\varepsilon_{P}^{1}%
\oplus\nu_{P})\longrightarrow V(\gamma_{G}^{n+2+\varkappa},\mathbb{R}%
^{p+1+\varkappa})\times P,
\]
by (10.5), the last group is isomorphic to%
\[
\pi_{n+\ell+p+\varkappa}\left(  T((\pi_{G}^{V})^{\ast}\widehat{\gamma}%
_{G}^{\ell+p}|_{V(\gamma_{G}^{n+2+\varkappa},\mathbb{R}^{p+1+\varkappa}%
)})\wedge T(\nu_{P})\right)  \text{.}%
\]
This is isomorphic to%
\[
\pi_{n+\ell+p+1+2\varkappa}\left(  T((\pi_{G}^{V})^{\ast}(\widehat{\gamma}%
_{G}^{\ell+p}\oplus\varepsilon_{G}^{1+\varkappa})|_{V(\gamma_{G}%
^{n+2+\varkappa},\mathbb{R}^{p+1+\varkappa})})\wedge T(\nu_{P})\right)  .
\]
Since we have the bundle map of $\widehat{\gamma}_{G_{n+2+\varkappa}}^{\ell
}\oplus\varepsilon_{G_{n+2+\varkappa}}^{p+1+\varkappa}$ to $\widehat{\gamma
}_{G}^{\ell+p}\oplus\varepsilon_{G}^{1+\varkappa}$\ covering the canonical map
$G_{n+2+\varkappa}\rightarrow G$ again, this is isomorphic to%
\[
\pi_{n+\ell+p+1+2\varkappa}\left(  T((\pi_{G_{n+2+\varkappa}}^{V})^{\ast
}(\widehat{\gamma}_{G_{n+2+\varkappa}}^{\ell}\oplus\varepsilon
_{G_{n+2+\varkappa}}^{p+1+\varkappa})|_{V(\gamma_{G_{n+2+\varkappa}%
}^{n+2+\varkappa},\mathbb{R}^{p+1+\varkappa})})\wedge T(\nu_{P})\right)
\text{.}%
\]
Since $\rho\circ\mu$ is a homotopy $\ell$-equivalence, it follows from (10.6)
that the last group is isomorphic to%
\[
\pi_{n+\ell+p+1+2\varkappa}\left(  T(\widehat{\gamma}_{G_{n-p+1,\ell
+p+1+\varkappa}}^{\ell+p+1+\varkappa})\wedge T(\nu_{P})\right)  \text{,}%
\]
which is isomorphic to%
\begin{equation}
\pi_{n+\ell+\varkappa}\left(  T(\widehat{\gamma}_{G_{n-p+1}}^{\ell})\wedge
T(\nu_{P})\right)  \approx\{S^{n+\ell+\varkappa};T(\widehat{\gamma}%
_{G_{n-p+1}}^{\ell})\wedge T(\nu_{P})\}.
\end{equation}
Since $\ell$ is sufficiently large and $\varkappa$\ is any integer with
$\varkappa\geqq p+3$, we have proved the assertion.
\end{proof}

Now we define%
\[
B_{V}=\lim_{\ell\rightarrow\infty}C^{0}(S^{n-p+\ell},T(\widehat{\gamma
}_{\widetilde{G}_{n-p+1,\ell}}^{\ell})).
\]
By the duality map $T(\nu_{P})\wedge S^{t}(P^{0})\rightarrow S^{p+\varkappa
+t}$, the last group in (10.7) is isomorphic to%
\[
\{S^{n+\ell+\varkappa}\wedge S^{t}(P^{0});T(\widehat{\gamma}_{G_{n-p+1}}%
^{\ell})\wedge S^{p+\varkappa+t}\}\approx\{S^{n-p+\ell}\wedge P^{0}%
;T(\widehat{\gamma}_{G_{n-p+1}}^{\ell})\}.
\]
Then we have the following proposition.

\begin{proposition}
Let $P$ be oriented and connected. Then we have the isomorphism%
\[
\lim_{\ell\rightarrow\infty}\pi_{n+\ell}\left(  T(\widehat{\gamma}_{V_{n+2}%
}^{\ell})\right)  \longrightarrow\lbrack P,B_{V}],
\]
where $V_{n+2}=V(\gamma_{\widetilde{G}_{n+1,\ell}}^{n+1}\oplus\varepsilon
_{\widetilde{G}_{n+1,\ell}}^{1},TP\oplus\varepsilon_{P}^{1})$.
\end{proposition}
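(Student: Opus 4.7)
The plan is to mimic the proof of Proposition 8.3, substituting the fold-map computations already carried out in Theorem 10.4 for the $\mathcal{K}$-invariant arguments used there. The work of Theorem 10.4 has, in effect, already done the hard identifications; what remains is to re-read its chain of isomorphisms in the oriented setting (with $\widetilde{G}_{n-p+1,\ell}$ in place of $G_{n-p+1,\ell}$, since $P$ is oriented and hence $\nu_P$ is orientable) and then to pass from $S$-maps out of $S^{n-p+\ell}\wedge P^{0}$ to homotopy classes of maps from $P$ into the mapping space $C^{0}(S^{n-p+\ell},T(\widehat{\gamma}_{\widetilde{G}_{n-p+1,\ell}}^{\ell}))$.

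First I would repeat the computation of Theorem 10.4 in the oriented case, keeping $P$ connected and oriented so that $\nu_P$ is an oriented $(\varkappa)$-bundle. This gives, for each $\varkappa\geqq p+3$ and $\ell\gg n,p,\varkappa$, an isomorphism
\[
\pi_{n+\ell}\!\left(T(\widehat{\gamma}_{V_{n+2}}^{\ell})\right)\;\longrightarrow\;\{S^{n+\ell+\varkappa};\,T(\widehat{\gamma}_{\widetilde{G}_{n-p+1,\ell}}^{\ell})\wedge T(\nu_{P})\}.
\]
Next, as in the paragraph preceding Proposition 10.5, the $S$-duality map $T(\nu_{P})\wedge S^{t}(P^{0})\rightarrow S^{p+\varkappa+t}$ of \cite{SpaDual} identifies the right-hand side with $\{S^{n-p+\ell}\wedge P^{0};\,T(\widehat{\gamma}_{\widetilde{G}_{n-p+1,\ell}}^{\ell})\}$. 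Now I would invoke the standard exponential-law bijection
\[
\{S^{n-p+\ell}\wedge P^{0};\,T(\widehat{\gamma}_{\widetilde{G}_{n-p+1,\ell}}^{\ell})\}\;\longrightarrow\;[P,\,C^{0}(S^{n-p+\ell},T(\widehat{\gamma}_{\widetilde{G}_{n-p+1,\ell}}^{\ell}))]
\]
exactly as in the proof of Proposition 8.3: a representative map $\alpha$ of the left-hand class is sent to the map $y\mapsto \alpha\big|(S^{n-p+\ell}\wedge\{y,*\})$. Finally, taking the direct limit over $\ell$ and noting that $B_{V}=\varinjlim_{\ell}C^{0}(S^{n-p+\ell},T(\widehat{\gamma}_{\widetilde{G}_{n-p+1,\ell}}^{\ell}))$ is defined precisely so as to commute with the construction (and, because $P$ is compact, $[P,-]$ commutes with the colimit), we obtain the desired isomorphism onto $[P,B_{V}]$.

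The step I expect to be the only genuine obstacle — beyond bookkeeping of Thom-space suspensions and Stiefel manifolds — is verifying that the $S$-duality identification, the exponential-law bijection, and the $\ell\to\infty$ limit all commute in a way that yields a well-defined group isomorphism rather than a mere bijection of sets; this is handled by the same argument used in Section 8 for $B_{\mathfrak{O}}$, so in practice I would simply reference that parallel and check naturality of the point-evaluation map with respect to the stabilization maps $T(\widehat{\gamma}_{\widetilde{G}_{n-p+1,\ell}}^{\ell})\wedge S^{1}\rightarrow T(\widehat{\gamma}_{\widetilde{G}_{n-p+1,\ell+1}}^{\ell+1})$ that define $B_{V}$.
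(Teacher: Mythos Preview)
Your proposal is correct and follows essentially the same approach as the paper: the paper's proof of Proposition 10.5 is exactly the paragraph preceding it, which takes the output (10.7) of Theorem 10.4 in the oriented setting, applies the $S$-duality map $T(\nu_P)\wedge S^t(P^0)\to S^{p+\varkappa+t}$ to reach $\{S^{n-p+\ell}\wedge P^0;T(\widehat{\gamma}_{\widetilde{G}_{n-p+1,\ell}}^{\ell})\}$, and then passes to $[P,B_V]$ via the exponential-law bijection and the limit over $\ell$, just as you outline.
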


Furthermore, let $n=p$ and $F$ be the space defined in Introduction. Then it
has been proved in \cite[Proposition 4.1 and Remark 4.3]{FoldRIMS} and
\cite{FoldKyoto} by applying Theorem 10.4 that there exists an isomorphism%
\begin{equation}
\mathfrak{O}(n,P;\Omega^{1,0})\approx\lbrack P,F].
\end{equation}
In particular, the homotopy group $\pi_{n}(F,\ast)$ of the $F$'s connected
component of maps of degree $0$ is isomorphic to the $n$-th stable homotopy
group $\pi_{n}^{s}$ by \cite[Theorem 1 and Corollary 2]{FoldKyoto}.

Chess\cite[1.3 Corollary]{Che} has proved, in our notation, that
$\mathfrak{O}(n,\mathbb{R}^{n};\Omega^{1,0})$ is isomorphic to $\pi_{n}^{s}$.
We can prove this fact from (10.5). In fact, let $\mathfrak{O}(n,S^{n}%
;\Omega^{1,0};0)$\ denote the subset of $\mathfrak{O}(n,S^{n};\Omega^{1,0}%
)$\ which consists of all cobordism classes $[f]$\ such that the degree of
$f$\ is $0$. By applying the h-principles in (h-P) for fold-maps to
$\mathbb{R}^{n}$ and $S^{n}$ in \cite{Exist} we can prove that the inclusion
$\mathbb{R}^{n}=S^{n}\backslash\{(0,\cdots,0,1)\}\rightarrow S^{n}$
canonically induces an isomorphism%
\[
\mathfrak{O}(n,\mathbb{R}^{n};\Omega^{1,0})\rightarrow\mathfrak{O}%
(n,S^{n};\Omega^{1,0};0).
\]
The detail is left to the reader.

The author proposes a problem: Let $\mathfrak{i}_{\Omega}:\pi_{n}^{s}%
\approx\mathfrak{O}(n,S^{n};\Omega^{1,0};0)\rightarrow\mathfrak{O}%
(n,S^{n};\Omega)$ denote the homomorphism induced from the inclusion
$\Omega^{1,0}(n,n)\rightarrow\Omega(n,n)$. For an element $a\neq0$ in $\pi
_{n}^{s}$, we define a $\mathcal{K}$-invariant open subset $U(a)$ in
$J^{k}(n,n)$ as the union of all $\mathcal{K}$-invariant open sets
$\Omega(n,n)$ such that $\mathfrak{i}_{\Omega}(a)\neq0$. Study how the
singularities in $U(a)$ are related to $a$. The spaces $B_{\mathfrak{O}}$'s
for $\mathfrak{O}(n,S^{n};\Omega)$ will be useful. This should be compared
with the theme and the problem studied in \cite{StableHGS}.

\bigskip

\textbf{Acknowledgement}. The author would like to express his gratitude to
Professor O. Saeki for helpful discussions and comments.

\bigskip

\end{document}